\let\ophi\phi
\let\phi\varphi
\newcommand{\privateE}[1]{#1^0}
\newcommand{\privateOm}[1]{#1'}
\newcommand{\PE}{\privateE{P}}
\newcommand{\POm}{\privateOm{P}}
\newcommand{\POmb}{\privateOm{\bar P}}
\newcommand{\phib}{\bar\phi}
\newcommand{\phif}{\Phi}
\newcommand{\LE}{\privateE{L}}
\newcommand{\LOm}{\privateOm{L}}
\newcommand{\lE}{\privateE{l}}
\newcommand{\lOm}{\privateOm{l}}
\newcommand{\pEm}{p}
\newtheorem*{remark*}{Remark}
\newtheorem{theorem}{Theorem}
\newtheorem*{otheorem}{Theorem}
\newtheorem*{mthm}{Main Theorem}
\newtheorem*{birkhoffConj}{Birkhoff Conjecture}
\newtheorem{proposition}[theorem]{Proposition}
\newtheorem{corollary}[theorem]{Corollary}
\newtheorem{lemma}[theorem]{Lemma}
\newtheorem{remark}[theorem]{Remark}
\newtheorem*{definition}{Definition}
\newcommand\mergedsub[2]{#1\sc@sub{#2}}
\newcommand\newsubcommand[3]{\newcommand#1{\mergedsub{#2}{#3}}}
\def\sc@sub#1{\def\sc@thesub{#1}\@ifnextchar_{\sc@mergesubs}{_{\sc@thesub}}}
\def\sc@mergesubs_#1{_{\sc@thesub,#1}}
\newcommand{\inv}{^{-1}}
\newcommand{\crit}{\text{c}}
\newcommand{\matr}[4]{\left(\begin{array}{cc}#1&#2\\#3&#4\end{array}\right)}
\newcommand{\depp}[1]{[#1]}
\newcommand{\TOm}{\T_\Om}
\newcommand{\intr}{\textup{int}\,}
\newcommand{\cb}{\Xi}
\newcommand{\conq}{w_q}
\newcommand{\hth}{\textup{h}}
\newcommand{\trl}{\textup{t}}
\newcommand{\ecc}{\textup{hr}}
\newcommand{\fL}{f_{\text{L}}} 
\newcommand{\ksp}{{k_*}}
\newcommand{\Z}{\mathbb{Z}}
\newcommand{\R}{\mathbb{R}}
\newcommand{\Q}{\mathbb{Q}}
\newcommand{\T}{\mathbb{T}}
\newcommand{\cL}{\mathcal{L}}
\newcommand{\lazu}{\Psi_{\text{L}}}
\newcommand{\cD}{\mathcal{D}}
\newcommand{\Om}{\Omega}
\newcommand{\om}{\omega}
\newcommand{\tet}{\theta}
\newcommand{\Id}{\textup{Id}}
\newcommand{\bfn}{{\bf n}}
\newcommand{\eF}{e^{\mathbb F}}
\newcommand{\Spec}{\text{Spec}\,}
\newcommand{\hDist}{\text{dist}_{\text{H}}}
\newcommand{\hGm}{\widehat\Gm}
\newsubcommand{\GmL}{\Gm}{\text{L}}
\newcommand{\kPar}{A}
\newcommand{\kEll}{\mathbb{E}}
\newcommand{\bbar}[1]{\bar{\bar{#1}}}
\newcommand{\rsmoothness}{{39}}
\newcommand{\rcloseness}{{1}}
\newcommand{\rsuperexp}{{703/702}}
\newcommand{\basis}{\mathcal B}
\def\om{\omega}
\def\al{\alpha}
\def\eps{\varepsilon}
\def\R{\mathbb R}
\def\T{\mathbb T}
\def\Q{\mathbb Q}
\def\Z{\mathbb Z}
\def\cE{\mathcal E}
\def\~{\tilde}
\def\Gm{\Gamma}
\def\bt{\beta}
\def\dt{\delta}
\def\lb{\lambda}
\newcommand{\nl}[1]{_{L^{#1}}}
\newcommand{\nlmu}[1]{_{L_\mu^{#1}}}
\newcommand{\nol}[1]{_{L^{#1}\to L^{#1}}}
\newcommand{\yq}{\eta_q}
\def\bdef{\begin{definition}}
\def\endef{\end{definition}}
\def\bthm{\begin{theorem}}
\def\ethm{\end{theorem}}
\def\blm{\begin{lemma}}
\def\elm{\end{lemma}}
\def\brm{\begin{remark*}}\def\erm{\end{remark*}}%
\def\bprop{\begin{proposition}}
\def\eprop{\end{proposition}}
\def\bcor{\begin{corollary}}
\def\ecor{\end{corollary}}
\def\be{\begin{eqnarray}}
\def\ee{\end{eqnarray}}
\def\beal{\begin{aligned}}
\def\enal{\end{aligned}}
\begin{document}
\title[Integrable deformations of ellipses of small
eccentricity]{An integrable deformation of an ellipse of
  small eccentricity is an ellipse}

\author{Artur Avila}
\address{Artur Avila\\
  CNRS, IMJ-PRG, UMR 7586, Univ Paris Diderot, Sorbonne
  Paris Cit\'e, Sorbonnes Universit\'es, UPMC Univ Paris 06,
  F-75013, Paris, France \& IMPA, Estrada Dona Castorina
  110, Rio de Janeiro, Brasil }
\author{Jacopo De Simoi}
\address{Jacopo De Simoi\\
  IMJ-PRG Universit\'e Paris Diderot, 75013 Paris, France \&
  Department of Mathematics\\
  University of Toronto\\
  40 St George St. Toronto, ON, Canada M5S 2E4} \email{{\tt
    jacopods@math.utoronto.ca}}
\urladdr{\href{http://www.math.utoronto.ca/jacopods}{http://www.math.utoronto.ca/jacopods}}
\author{Vadim Kaloshin}
\address{Vadim Kaloshin\\
  Department of Mathematics\\
  University of Maryland, College Park\\
  20742 College Park, MD, USA.}  \email{{\tt
    vadim.kaloshin@gmail.com}}
\urladdr{\href{http://www2.math.umd.edu/~vkaloshi/}{http://www2.math.umd.edu/~vkaloshi/}}

\maketitle
\begin{abstract}
  The classical Birkhoff conjecture claims that the boundary
  of a strictly convex integrable billiard table is
  necessarily an ellipse (or a circle as a special case).
  In the paper we show that a version of this conjecture is
  true for tables bounded by small perturbations of ellipses
  of small eccentricity.
\end{abstract}

\section{Introduction}

Let $\Om \subset \R^2$ be a strictly convex domain; we say
that $\Om$ is $C^r$ if its boundary is a $C^r$-smooth curve.
We consider the billiard problem inside $\Om$, which is then
commonly called the ``billiard table''.  The problem was
first investigated by Birkhoff (see~\cite{Bf}) and is
described as follows: a massless billiard ball moves with
unit speed and no friction following a rectilinear path
inside the domain $\Om$.  When the ball hits the boundary,
it is reflected elastically according to the law of optical
reflection: the angle of reflection equals the angle of
incidence.  Such trajectories are called \emph{broken
  geodesics}, {as they correspond to local minimizers of the
  distance functional.}

We call a (possibly not connected) curve $\hGm \subset \Om$
a \emph{caustic} if any billiard orbit having \emph{one} segment
tangent to $\hGm$ has \emph{all} its segments tangent to $\hGm$.

We call a billiard $\Om$ \emph{locally integrable} if the
union of all caustics has nonempty interior; likewise, a
billiard $\Om$ is said to be \emph{integrable}
(see~\cite{Gu}) if the union of all \emph{smooth convex}
caustics, denoted $\mathcal C_\Om$, has nonempty interior.

It follows by rather elementary geometrical considerations,
(but see e.g.~\cite[Theorem 4.4]{Taba} for a detailed proof)
that a billiard in an ellipse is integrable: its caustics
are indeed co-focal ellipses and hyperbolas.  {A long
standing open question asks whether or not there exist
integrable billiards which are different from ellipses. }
\begin{birkhoffConj}[see\footnote{ The conjecture,
    classically attributed to Birkhoff, can be found in
    print only in~\cite{Po} by H. Poritsky, who worked with
    Birkhoff as a post-doctoral fellow in the years
    1927--1929.}~\cite{Po}, \cite{Gu}]
  If the billiard in $\Om$ is integrable, then $\partial\Om$
  is an ellipse.
\end{birkhoffConj}
The most notable result related to the Birkhoff Conjecture
is due to Bialy (see~\cite{Bi} but also~\cite{Woj}) who
proved that, if convex caustics completely foliate $\Om$,
then $\Om$ is necessarily a disk.  On the other hand, it is
simple to construct smooth (but not analytic) locally
integrable billiards different from ellipses.  In fact, it
suffices to perturb an ellipse away from a neighborhood of
the two endpoints of the minor axis.  More interestingly,
Treschev (see~\cite{Tre}) gives indication that there are
analytic locally integrable billiards such that the dynamics
around one elliptic point is conjugate to a rigid rotation.

There is a remarkable relation between properties of the
billiard dynamics in $\Om$ and the spectrum of the Laplace
operator in $\Om$.  Given a smooth domain $\Om$, the length
spectrum of $\Om$ is defined as the collection of perimeters
of its periodic trajectories, counted with multiplicity:
\[
\mathcal L_\Om:=\mathbb{N}\{\text{lengths of periodic
  trajectories in }\Om\} \cup \mathbb{N}\ell_{\partial\Om},
\]
where $\ell_{\partial\Om}$ denotes the length of $\partial\Om$.

Let $\Spec\Delta$ denote the spectrum of the Laplace
operator in $\Om$ with (e.g) Dirichlet boundary
condition\footnote{ From the physical point of view, the
  Dirichlet eigenvalues $\lb$ correspond to the
  eigenfrequencies of a membrane of shape $\Om$ which is
  fixed along its boundary.  }, i.e. the set of $\lb$ so
that
\begin{align*}
  \Delta u &= \lb u, &
  u &= 0 \text{ on }\partial \Om.
\end{align*}

Andersson--Melrose (see~\cite[Theorem (0.5)]{AM}, which
substantially generalizes some earlier result
by~\cite{Chaza,Dui}) proved that, for strictly convex
$C^{\infty}$ domains, the following relation between the
\emph{wave trace} and the length spectrum holds:
\begin{align*}
  \textup{sing\ supp}\left(t\mapsto \sum_{\lb_j\in \Spec \Delta} \exp (i
  \sqrt{-\lb_j}t)\right)\subset\pm\mathcal
  L_\Om\cup\{0\}.
\end{align*}
Generically (i.e.\ when each element of the length spectrum
has multiplicity one and the corresponding periodic orbits
satisfy a non-degeneracy condition) the above inclusion
becomes an equality and the Laplace spectrum determines the
length spectrum (see e.g.~\cite{Popov} and references
therein). 

This is, of course, related to inverse spectral theory and
to the famous question by M. Kac~\cite{Ka}: ``Can one hear
the shape of a drum?'', which more formally translates to
``Does the Laplace spectrum determine a domain?''  There is
a number of counterexamples to this question (see
e.g. \cite{GWW,Sunada,Vi}), but the domains considered in
such examples are neither smooth nor convex.

{In~\cite{Sa}, P. Sarnak conjectures that the set of
  smooth convex domains isospectral to a given smooth convex
  domain is finite.}  Hezari--Zelditch, {going in the
  affirmative direction,} proved in~\cite{HZ} that, given an
ellipse $\cE$, any one-parameter $C^\infty$-deformation
$\Om_\eps$ which preserves the Laplace spectrum (with
respect to either Dirichlet or Neumann boundary conditions)
and the $\mathbb Z_2\times\mathbb Z_2$ symmetry group of the
ellipse has to be \emph{flat} (i.e., all derivatives have to
vanish for $\eps = 0$).  Further historical remarks on the
inverse spectral problem can also be found in~\cite{HZ}.
\section{Our main result}
\label{sec:weak-main}
Given a strictly convex domain $\Om$, we define the
associated billiard map $f_{\Om}$ as follows.  Let us fix a
point $P_0\in\partial\Om$ and denote with $s$ the arc-length
parametrization of $\partial\Om$ starting at $P_0$ in the
counter-clockwise direction; let $P_s$ denote the point on
$\partial \Om$ parametrized by $s$.  We define the billiard
map
\begin{align}
  \label{eq:billiard-map}
  f_\Om: \TOm\times [0,\pi]&\to \TOm\times [0,\pi], \\
  (s,\varphi) &\mapsto (s',\varphi'),\notag
\end{align}
where $\TOm = \R/\ell_{\partial\Om}\Z$, $\ell_{\partial\Om}$
is the length of $\partial\Om$, {$P_{s'}$} is the reflection
point of a ray leaving {$P_s$} with angle $\varphi$ with
respect to the counter-clockwise tangent ray to the boundary
$\partial\Om$ and $\varphi'$ is the angle of incidence of
the ray at $P_{s'}$ with the clockwise tangent.  If there is
no confusion we will drop the subscript $\Om$ and simply
refer to the billiard map as $f$ and let $\T = \T_\Om$.

In the remaining part of this paper, we agree that all
caustics that we will consider will be smooth and convex;
we will refer to such curves simply as \emph{caustics}.

Let $\hGm$ be a caustic for $\Om$; for any $s\in\TOm$ there
exist two rays leaving $P_s$ which are tangent to $\hGm$,
one aligned with the counter-clockwise tangent of $\hGm$ and
the other one with the clockwise tangent; let us denote with
$\phi^{\pm}_{\hGm}(s)$ their corresponding angles of
reflection.  Observe that, by reversibility of the dynamics,
the trajectory associated with $\phi^-$ is the time-reversal
of the trajectory associated with $\phi^+$, i.e.
$\phi^- = \pi-\phi^+$.  We can, thus, restrict our analysis
to (e.g.)\ $\phi^+$; in doing so we will drop, for
simplicity, the superscript $+$ from our notations.

The graph $\Gm = \{(s,\phi_{\hGm}(s))\}_{s\in\T}$ \, is, by
definition of a caustic, a (non-contractible) $f$-invariant
curve\footnote{ Indeed, by Birkhoff's Theorem, any
  $f$-invariant non-contractible curve is a Lipschitz
  graph.}. Therefore, the restriction $f|_{\Gm}$ is a
homeomorphism of the circle, and, as such, it admits a
rotation number, which we denote with $\om$.  In fact (since
we have chosen $\phi^+$ over $\phi^-$), we always have
$0 < \om\le1/2$.
\begin{definition}
  We say $\hGm$ is an \emph{integrable rational caustic} if
  the corresponding (non-contractible) invariant curve $\Gm$
  consists of periodic points; in particular, the
  corresponding rotation number is rational.  If $\Om$
  admits integrable rational caustics of rotation number
  $1/q$ for all $q>2$, we say that $\Om$ is \emph{rationally
    integrable}.
\end{definition}
\begin{remark*}
  A more standard definition of integrability requires
  existence of a ``nice'' first integral. Existence of a
  ``nice'' first integral for a billiard does not imply
  integrability of any caustic of rational rotation number.
  For instance, the invariant curve corresponding to points
  belonging to the coinciding separatrix arcs of a
  hyperbolic periodic orbit of $f$ is not integrable.
\end{remark*}
The following lemma provides a sufficient (although a priori
weaker) condition for rational integrability.
\begin{lemma}
  Assume the interior of the union of all smooth convex
  caustics $\intr{}\mathcal C_\Om$ of a billiard $\Om$
  contains caustics of rotation number $1/q$ for any
  $q\ge2$, then $\Om$ is rationally integrable.
\end{lemma}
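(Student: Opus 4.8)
The plan is to prove the slightly stronger statement: \emph{every} caustic $\hGm\subset\intr\mathcal C_\Om$ of rotation number $1/q$ is automatically an integrable rational caustic. Since by hypothesis such a caustic exists for each $q\ge2$, and in particular for each $q>2$, this gives at once that $\Om$ is rationally integrable.

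First I would exploit that $\hGm$ is a compact subset of the \emph{open} set $\intr\mathcal C_\Om$. Every point of $\intr\mathcal C_\Om$ lies on some smooth convex caustic, and two distinct caustics are disjoint (their invariant curves are Lipschitz graphs invariant under the monotone twist map $f$, hence cannot cross, cf.\ Birkhoff's theorem). Therefore the caustics meeting a small neighborhood of $\hGm$ are nested and foliate an open neighborhood $N\subset\intr\mathcal C_\Om$ of $\hGm$. Passing to the phase space $\T\times(0,\pi)$, the corresponding invariant curves form a family $\{\Gm_t\}_{t\in(-\eps,\eps)}$, with $\Gm_0=\Gm=\{(s,\phi_{\hGm}(s))\}_{s\in\T}$, depending continuously on $t$ and foliating an open $f$-invariant annulus $A$; each $\Gm_t$ is an essential $f$-invariant circle (the graph of $\phi_{\hGm_t}$).

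The heart of the argument is the observation that $f$ preserves both the smooth area measure $\sin\phi\,ds\,d\phi$ on $A$ and every leaf $\Gm_t$ of the foliation (each leaf being an invariant curve). Disintegrating the area measure along the foliation, $f$ must then preserve the conditional measures on the leaves; each such measure is equivalent to arc length on the corresponding $\Gm_t$, hence nonatomic and of full support. In particular $f|_{\Gm_0}$ is an orientation-preserving homeomorphism of the circle $\Gm_0$ preserving a nonatomic fully supported measure, so it is topologically conjugate to the rigid rotation by its rotation number $\om=1/q$. Since the $q$-th power of the rotation $R_{1/q}$ is the identity, it follows that $f^q|_{\Gm_0}=\Id$: every point of $\Gm_0$ is $q$-periodic, i.e.\ $\hGm$ is an integrable rational caustic of rotation number $1/q$. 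As $q>2$ is arbitrary, $\Om$ is rationally integrable.

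The step I expect to require the most care is the geometric bookkeeping of the second and third paragraphs: checking that ``$\hGm$ lies in the interior of the union of all caustics'' really translates into ``a full phase-space neighborhood of $\Gm$ is foliated by invariant circles'', with enough regularity (continuity of $t\mapsto\Gm_t$, measurability and positivity of the conditional densities) to legitimize the disintegration and to pin down the invariance of the conditional measure on the specific leaf $\Gm_0$. The purely dynamical ingredients — non-crossing of caustics, Birkhoff's graph theorem, invariance of the Liouville measure, and the classification of circle homeomorphisms preserving a good measure — are all classical.
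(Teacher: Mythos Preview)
Your approach is correct and is essentially the argument that lies behind the reference the paper invokes: the paper's own proof is a one-line citation of the known fact that a caustic lying in the interior of a caustic foliation is automatically integrable (Tabachnikov, \emph{Geometry and Billiards}, Cor.~4.5), whereas you sketch why that fact is true. The skeleton --- non-crossing of convex caustics via Birkhoff's graph theorem, hence a genuine foliation of a phase-space neighbourhood of $\Gm$ by invariant circles, then invariance of the Liouville area together with leaf-invariance forcing an invariant nonatomic measure on $\Gm$, hence conjugacy to the rigid rotation $R_{1/q}$ --- is exactly the standard route.

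The one point that deserves more than the passing mention you give it is the passage from ``conditional measures are invariant for almost every leaf'' to ``the conditional measure on the specific leaf $\Gm_0$ is invariant''. Disintegration alone yields only an a.e.\ statement, and a priori the transverse regularity of the foliation is merely continuous, so the conditionals need not vary continuously in $t$. A clean way to close this is to avoid disintegration and argue by approximation: for a full-measure set of $t$ the restriction $f|_{\Gm_t}$ is conjugate to a rotation, hence every point of $\Gm_t$ is $q_t$-periodic when the rotation number $\omega(t)$ is rational; picking $t_n\to 0$ in this good set with $\omega(t_n)=1/q$ (or, if $\omega$ is strictly monotone near $0$, using that $f^q$ on nearby leaves has displacement of a definite sign, which combined with the twist and graph properties forces $f^q|_{\Gm_0}=\Id$) and passing to the limit using continuity of $f$ and of the leaves finishes the job. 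You already flag this as the delicate step; just be aware it is not merely bookkeeping.
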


\begin{proof}
  It is known that if a caustic with rational rotation
  number belongs to the interior of a foliation with
  caustics, then it is integrable (see e.g.~\cite[Corollary
  4.5]{Taba} for the general statement and~\cite[Proposition
  2.8]{GM1} for the special case of an ellipse).  Thus, our
  assumption guarantees the rational integrability of $\Om$.
\end{proof}
Let us denote with $\cE_e \subset \R^2$ an ellipse of
eccentricity $e$ and perimeter {$1$}.
\begin{mthm}
  There exists $e_0>0$ and $\eps_0 > 0$ such that for any
  $0\le e\le e_0$, $0\le \eps < \eps_0$, any rationally
  integrable $C^\rsmoothness$-smooth domain $\Om$ so that
  $\partial\Om$ is $C^{\rsmoothness}$-$\eps$-close to
  $\cE_e$ is an ellipse.
\end{mthm}
\begin{remark*}
  We will indeed prove a slightly stronger version of the
  above theorem, stated as Theorem~\ref{t_main}.
\end{remark*}
\begin{remark*}
  Our requirements for smoothness are probably not optimal,
  but they are crucial for the approach used in our proof
  (see the proof of Lemma~\ref{l_inductive} and, in
  particular, Footnote~\ref{fn_rsmoothness}).  One could
  possibly relax them using~\cite{Bu}.
\end{remark*}
\textsc{Acknowledgments:} We thank L. Bunimovich, D.
Jakobson, I. Polterovich, A. Sorrentino, D. Treschev, J. Xia, S. Zelditch
and the anonymous referee for their most useful comments which
allowed to vastly improve the exposition of our result. JDS
acknowledges partial NSERC support. VK acknowledges
partial support of the NSF grant DMS-1402164.

\section{Our strategy and the outline of the
  paper}\label{s_strategy}
Let us start by exploring the simplified setting of
integrable infinitesimal deformations of a circle; we then
use this insight to describe the main strategy of our proof
in the general case.  Let $\Om_0$ be the unit disk and let
us denote polar coordinates on the plane with $(r,\ophi)$.
Let $\Om_\eps$ be a one-parameter family of deformations
given in polar coordinates by
$\partial\Om_\eps=\{(r,\ophi)=(1+\eps
n(\ophi)+O(\eps^2),\ophi)\}$.
Consider the Fourier expansion of $n$:
\begin{align*}
  n(\ophi)=n_0 + \sum_{k > 0} n'_k\sin (k\ophi)+ n''_k \cos (k\ophi).
\end{align*}
\begin{otheorem}[Ramirez-Ros~\cite{RR}] If $\Om_\eps$ has an
  integrable rational caustic $\Gm_{1/q}$ of rotation number
  $1/q$ for all sufficiently small $\eps$, then $n'_{kq} =
  n_{kq}'' = 0$ for any $k\in\mathbb N$.
\end{otheorem}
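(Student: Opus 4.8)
The plan is to study the perturbative expansion of the condition "$\Om_\eps$ admits an integrable rational caustic of rotation number $1/q$" and extract from it, at first order in $\eps$, linear constraints on the Fourier coefficients of $n$. The starting observation is that for the unperturbed disk $\Om_0$, the caustic of rotation number $1/q$ is the concentric circle of radius $\cos(\pi/q)$, and the corresponding periodic orbits are the regular $q$-gons inscribed in the unit circle; these form a one-parameter family (rotations of a fixed $q$-gon), so the integrability condition is automatic at $\eps=0$ and we need to look at what survives after perturbation.

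First I would set up the length (generating) function $L(s,s')$ equal to the Euclidean distance between boundary points $P_s$ and $P_{s'}$, and recall that periodic orbits of rotation number $1/q$ correspond to critical configurations $(s_0,\dots,s_{q-1})$ of the perimeter functional $\mathcal P_q(s_0,\dots,s_{q-1})=\sum_{j} L(s_j,s_{j+1})$. For the circle, the critical manifold is the circle of initial rotations $s_0=t$, $s_j = t + 2\pi j/q$. The key computational step is to expand $\mathcal P_q$ to first order in $\eps$ along this critical manifold: writing $\partial\Om_\eps$ in the given polar form and plugging in the unperturbed regular $q$-gon starting at angle $t$, one computes $\partial_\eps \mathcal P_q\big|_{\eps=0}$ as an explicit function of $t$. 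A direct calculation (the perimeter of the $q$-gon with vertices slid radially by $\eps n(\theta_j)$) gives, up to a positive constant depending only on $q$, something proportional to $\sum_{j=0}^{q-1} n\big(t + 2\pi j/q\big)$.

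Next I would invoke the standard fact from Aubry–Mather / twist-map theory (already implicit in the caustic discussion of the excerpt, via Birkhoff's theorem) that integrability of the rational caustic forces the invariant curve $\Gm_{1/q}$ to consist entirely of periodic points, hence the critical manifold of $\mathcal P_q$ must persist as a whole circle for all small $\eps$; in particular the first-order term $\partial_\eps\mathcal P_q|_{\eps=0}$ must be \emph{constant} in $t$ (a non-constant first variation would, by the implicit function theorem applied to $\nabla \mathcal P_q = 0$, destroy all but finitely many of the periodic orbits). Therefore $\sum_{j=0}^{q-1} n(t + 2\pi j/q)$ is independent of $t$. Expanding $n$ in its Fourier series and using that $\sum_{j=0}^{q-1} e^{ik(t+2\pi j/q)}$ vanishes unless $q \mid k$, in which case it equals $q e^{ikt}$, the $t$-dependent part of the sum is exactly $q\sum_{k>0,\, q\mid k}\big(n'_k \sin(kt) + n''_k\cos(kt)\big)$; demanding this vanish for all $t$ yields $n'_{kq} = n''_{kq} = 0$ for every $k \in \mathbb{N}$, which is the claim.

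The main obstacle, and the only place where care is genuinely needed, is justifying rigorously that integrability forces the first-order term to be constant — i.e., that one cannot have the critical manifold deform in a way that keeps it a full circle of critical points even though the naive first variation is non-constant. The clean way to see this is: parametrize points of $\Gm_{1/q}$ by the first coordinate $s_0$, so the periodicity condition reads $\partial_{s_0}\mathcal P_q\big(s_0, s_1(s_0,\eps),\dots\big) = 0$ identically in $s_0$ after eliminating $s_1,\dots,s_{q-1}$ via the interior equations $\partial_{s_j}\mathcal P_q = 0$ (which are non-degenerate for the circle by the standard positive-definiteness of the twist-map Hessian transverse to the rotation direction). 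Differentiating the resulting identity in $\eps$ at $\eps=0$ and using that the unperturbed $\partial_{s_0}\mathcal P_q \equiv 0$ on the critical circle, one obtains precisely that $\frac{d}{ds_0}\big(\partial_\eps \mathcal P_q|_{\eps=0}\big) \equiv 0$, i.e. the first variation of the perimeter restricted to the unperturbed critical circle is constant. This is a soft argument and does not require the explicit form of the Hessian, only its non-degeneracy transverse to the neutral direction. The rest is the elementary Fourier computation above.
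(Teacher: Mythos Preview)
The paper does not give its own proof of this statement; it is quoted from Ramirez-Ros as motivation for the strategy. Your argument is correct, and it is in fact the infinitesimal, circular specialization of the machinery the paper builds afterwards: your first variation $2\sin(\pi/q)\sum_{j=0}^{q-1} n(t+2\pi j/q)$ is exactly the Deformation Function $\cD(\bfn,S,\Phi;1/q)$ of~\eqref{deformed-function} evaluated at the circle (where $S=\mathrm{Id}$ and $\Phi\equiv\pi/q$), and your constancy argument is the infinitesimal version of Theorem~\ref{perturbation-perimeter} together with the remark (in the proof of Lemma~\ref{l_projection-estimate}) that both $L^0_q$ and $L'_q$ are constant when the caustic is integrable, forcing $\cD$ to be constant modulo higher order terms. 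The only stylistic difference is that you justify constancy via the reduced perimeter and the implicit function theorem on the interior equations, whereas the paper extracts it directly from the geometric expansion of $L'_q-L^0_q$; the content is the same.
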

Let us now assume that the domains $\Om_\eps$ are rationally
integrable for all sufficiently small $\eps$: then the above
theorem implies that $n'_k = n''_k = 0$ for $k > 2$, i.e.
\begin{align*}
  n(\ophi)&=n_0+n'_1\cos \ophi+n_1''\sin \ophi+
            n'_2\cos 2\ophi+n_2''\sin 2\ophi\\
          &= n_0+n_1^*\cos (\ophi-\ophi_1)+
            n_2^*\cos 2(\ophi-\ophi_2)
\end{align*}
where $\ophi_1$ and $\ophi_2$ are appropriately chosen
phases.
\begin{remark}\label{r_motionDescription}
  Observe that\nopagebreak
  \begin{itemize}
  \item $n_0$ corresponds to an homothety;
  \item $n_1^*$ corresponds to a translation in the
    direction forming an angle $\ophi_1$ with the polar axis
    ($\{\ophi = 0\}$);
  \item $n_2^*$ corresponds to a deformation into an ellipse
    of small eccentricity with the major axis meeting the
    polar axis at the angle $\ophi_2$.
  \end{itemize}
  This implies that, \emph{infinitesimally} (as
  $\eps\to 0$), rationally integrable deformations of a
  circle are tangent to the $5$-parameter family of
  ellipses.
\end{remark}
Observe that in principle, in the above theorem, one may
need to take $\eps\to0$ as $q\to\infty$.  On the other hand,
we are studying a situation in which $\eps > 0$ is small but
not infinitesimal; hence we cannot use directly the above
theorem to prove our result, and we need to pursue a more
elaborate strategy, which we now describe.

Let $\Om_0$ be a strictly convex domain {(to fix ideas
  the reader may assume $\Om_0$ to be an ellipse)} and
consider a tubular neighborhood $U_{\Om_0}$ of
$\partial \Om_0$ so that for any $P\in U_{\Om_0}$ we can
associate the \emph{tubular coordinates} $(s,n)$, where $s$
is the $s$-coordinate of the orthogonal projection of $P$
onto the boundary $\partial \Om_0$ and ${n}$ is the oriented
distance of $P$ along the orthogonal direction to
$\partial\Om_0$ defined so that $n>0$ outside (resp.\
$n < 0$ inside) of $\Om_0$.

We can, thus, identify any given domain $\Om$ so that
$\partial\Om\subset U_{\Om_0}$ with the graph of a function
$\bfn(s)$ in tubular coordinates.  In order to do that one
can \emph{project} points from $\partial \Om$ to
$\partial \Om_0$ and \emph{lift} points from
$\partial \Om_0$ to $\partial \Om$.  In the sequel we will
only consider perturbations $\Om$ which can be described by
a function $\bfn(s)$ of this form and we introduce the
following (slightly abusing, but suggestive) notation
\begin{align*}
  \partial\Om=\partial\Om_0 + \bfn.
\end{align*}
Our strategy now proceeds as follows: let $\Om_0$ be an
ellipse {$\mathcal E_e$ of eccentricity $e$ and
  perimeter $1$}; in particular,
all rational caustics of rotation number $1/q$ for $q > 2$ are
integrable.\\%

\noindent\textbf{Step 1:} We derive a quantitative necessary
condition for preservation of an integrable rational caustic
(see Theorem~\ref{perturbation-perimeter} in
Section~\ref{sec:aa-necessary-cond}).\\%

\noindent\textbf{Step 2:} We define Deformed
Fourier modes for the case of ellipses; they will be denoted
by $\{c_0,c_q,s_q:q > 0\}$ and satisfy the following
properties:
\begin{itemize}
\item {\it (relation with Fourier Modes)} There exists (see
  Lemma~\ref{error-estimate-2}) $C^*(e)>0$ with $C^*(e)\to0$
  as $e\to0$ so that $\|c_0-1\|_{C^0}\le C^*(e)$ and for any $q\ge 1$
  \begin{align*}
 \|c_q-\cos (2\pi q\cdot)\|_{C^0} &\le C^*(e)/q,
&\|s_q-\sin (2\pi q\cdot)\|_{C^0} &\le C^*(e)/q.
  \end{align*}
\item (\emph{transformations preserving integrability}) We
  define (in Section~\ref{selected-five}) the functions
\begin{align*}
  c_0,\,c_1,\,s_1,\,c_2,\,s_2
\end{align*}
having the same meaning described in the previous
remark: they generate homotheties, translations and
hyperbolic rotations about an arbitrary axis.

\item ({\it annihilation of inner products})
Let $\bfn$ identify a $C^r$ deformation of $\Om_0$ and consider,
for $\eps\in(-\eps_0,\eps_0)$, the one-\hskip0pt{}parameter family of
domains
\begin{align*}
  \partial\Om_\eps:=\partial\Om_0+\eps\bfn.
\end{align*}
For any $q > 2$, we define (in
Section~\ref{s_DeformedFourierModes}) functions $c_q,s_q$ so
that if $\Om_\eps$ has an integrable rational caustic
$\hGm^\eps_{1/q}$ of rotation number $1/q$ for all
sufficiently small $\eps$, then
\begin{align}
  \label{eq:vanishing}
  \langle\bfn,c_q\rangle =  \langle\bfn,s_q\rangle = 0,
\end{align}
where $\langle\cdot,\cdot\rangle$ is a weighted $L^2$ inner
product.  In fact, in Lemma~\ref{l_projection-estimate} we
derive a perturbative version of the above {infinitesimal
  orthogonality} conditions.  More precisely: if, for some
sufficiently $C^1$-small, $C^5$-perturbation $\bfn$, the
domain bounded by $\partial \Om=\partial \Om_0+\bfn$ has an
integrable rational caustic $\hGm_{1/q}$, then we can
replace~\eqref{eq:vanishing} with
\begin{align}\label{eq:vanishing-nonuniform}
  \langle\bfn,c_q\rangle &=O(q^8\|\bfn\|_{C^1}^2) , &\langle\bfn,s_q\rangle&=O(q^8\|\bfn\|_{C^1}^2).
\end{align}
Observe that, as we hinted at earlier, the above estimate is
necessarily \emph{non-uniform in $q$}.
Notice that the functions $c_q,s_q$ can be explicitly
defined using elliptic integrals via action-angle
coordinates (see~\eqref{dynam-basis}).
\item (\emph{linear independence}) For sufficiently small
  eccentricity (see Section~\ref{main-nonfamily}), the
  functions $\{c_0,c_q, s_q\,:\,q>0\}$ form a (non-orthogonal) basis of
  $L^2$.
\end{itemize}

\

\noindent\textbf{Step 3:} We then conclude the proof (in Section~\ref{s_theProof})
using the following approximation result
(Lemma~\ref{l_inductive}): if $\Om_\eps$ is rationally
integrable and $\partial\Om_\eps$ is
an $O(\eps)$-perturbation of an ellipse $\partial\Om_0=\cE_e$
of small eccentricity $e$, then there exists an ellipse
$\bar\cE$ such that $\partial\Om_\eps$ is an
$O(\eps^{\beta})$-perturbation of $\bar\cE$ for some
$\beta > 1$. This step is done as follows
\begin{itemize}
\item For a fixed $\eps=\|\bfn\|_{C^1}$ and each
  $2 < q \le q_0(\eps)=[\eps^{-1/9}]$,
  condition~\eqref{eq:vanishing-nonuniform} implies that the
  size of the $q$-th generalized Fourier coefficients is
  small and, therefore, their sum up to $q_0$ is bounded by
  $\eps^\bt$.
\item Due to decay of the generalized Fourier coefficients
  we can also show that the sum over $q>q_0$ is bounded by
  $\eps^\bt$.
\end{itemize}
Combining the above estimates, we gather that
$\partial \Omega_\eps$ can be approximated by an ellipse
$\bar {\cE}$ with an error $O(\eps^\bt)$, where $\bar\cE$ is
the ellipse generated by projecting $\bfn$ onto the subspace
generated by the first $5$ Deformed Fourier modes.  Applying
this result to the best approximation of
$\partial\Omega_\eps$ by an ellipse, we obtain a
contradiction unless $\partial\Omega_\eps$ is itself an
ellipse.
\begin{remark*}
  We emphasize that our condition on eccentricity is not an
  abstract smallness assumption.  More specifically: one has
  to check that some explicit condition on the eccentricity
  (given in~\eqref{eccentr-smallness-assumption}) holds
  true.
\end{remark*}
\section{A sufficient condition for rational integrability,
  the Deformation Function, and action-angle variables}
\label{sec:aa-necessary-cond}
Let $\Om_0 = \cE_e\subset\R^2$ be an ellipse of eccentricity
$e$ and perimeter $1$; let $f=f_{\cE_e}$ be the associated
billiard map.  For convenience, let us fix $P_0$ be one of
the end-points of the major axis.  For $0<\om<1/2$, let
$\hat \Gm_\om$ be the caustic of rotation number $\om$ and
$\Gm_\om$ be the corresponding invariant curve of $f$.
Then, for any $\om$, there exists a parametrization $\theta$
of $\cE_e$ so that $f$ acts as a rigid rotation of angle
$\om$, i.e.\ if $S(\theta;\om)$ denotes the change of
variables from the $\tet$-parametrization to the arc-length
parametrization, for any $\tet\in\T$ we have:
\begin{align}
  f(S(\tet;\om),\phif(\tet;\om))=
(S(\tet+\om;\om),\phif(\tet+\om;\om)),
\label{e_dynamicalS}
\end{align}
where we introduced the shorthand notation
$\phif(\tet;\om) = \phi_{\hGm_\om}(S(\tet;\om))$.  In other
words, $(S,\phif)$ is the change of variables from the
\emph{action-angle coordinates} $(\theta,\om)$ to arc-length
and reflection angle.  Geometrically: given $S(\tet;\om)$,
consider the trajectory leaving $P_{S(\tet;\om)}$ with angle
$\phif(\tet;\om)$; this ray will be tangent to $\hGm_\om$
and land at the point parametrized by $S(\tet+\om;\om)$ with
angle $\phif(\tet+\om;\om)$ with respect to the tangent to
$\cE_e$ at $S(\tet+\om;\om)$.

We normalize $S$ so that $S(0;\om)=0$ for all
$\om\in (0,1/2)$.  Following Tabanov (see~\cite{Ta}) we can
assume $S$ and $\phif$ to be analytic in both $\tet$ and
$\om$.  In particular, for each $\om \in (0,1/2)$ the map
$S(\cdot;\om)$ is an (analytic) circle diffeomorphism.
Observe additionally that both functions depend analytically
on the parameter $e$ and, moreover, for $e = 0$ we have
$S(\tet;\om) = \tet$ and $\phif(\tet;\om) = \pi\om$.

Let now $\Om$ be a deformation of $\cE_e$ identified by a
$C^{\rsmoothness}$ function $\bfn$.  Given
$p/q\in\Q\cap(0,1/2)$ with $p$ and $q$ relatively prime, let
us define the \emph{Deformation Function}:
\begin{align} \label{deformed-function}
  \cD\left(\bfn,S,\phif,\frac pq\right)(\tet)= 2\sum_{k=1}^q
  \bfn \left(S\left(\tet+ k\frac p q; \frac pq\right)\right)
  \sin \phif\left(\tet+k\frac pq; \frac pq\right).
\end{align}
In Theorem~\ref{perturbation-perimeter} below we show that
the Deformation Function is the leading term of the change
of perimeter of the possibly non-convex polygon inscribed in
$\cE_e$ corresponding to an orbit of rotation number $p/q$
starting at $P_{S(\tet)}$.  In order state more precisely
the above consideration, we now proceed to introduce some
further notation.

First, since in the present article we are interested only
in caustics of rotation number $1/q$, we restrict the
analysis to this case.  Let us thus introduce the convenient
shorthand notations $S_q = S(\cdot, 1/q)$ and
$\phif_q = \phif(\cdot,1/q)$.  Recall that for any ellipse
$\cE_e$, every caustic $\hGm_{1/q}$ of rotation number $1/q$
with $q > 2$ is an integrable rational caustic.  Recall also
that, for any $0\le s<1$, $P_s$ denotes the point whose
arc-length distance from $P_0$ in the counter-clockwise
direction equals $s$.  Define
\begin{align*}
\PE_k(\tet)&=P_{S_q(\tet+k/q)} &\text{for }k&=0,\cdots,q-1.
\end{align*}
In other words, for any $\tet\in \T$ we associate the
corresponding $q$-periodic orbit tangent to the caustic
$\hGm_{1/q}$ given by the points
$\PE_0(\tet),\cdots,\PE_{q-1}(\tet)$.  The variational
characterization of periodic orbits (see e.g.~\cite{Bf})
implies that periodic orbits are given by the vertices of an
inscribed convex $q$-gon with one vertex at $P_{S_q(\tet)}$
and whose perimeter is a stationary value.  Let
$\LE_q(\tet)$ be the perimeter of this $q$-gon, i.e.
\begin{align*}
  \LE_{q}(\tet)=\sum_{k=0}^{q-1} \|\PE_{k+1}(\tet)-\PE_{k}(\tet)\|,
\end{align*}
where $\|\cdot\|$ is the Euclidean distance. Then, since
$\hGm_{1/q}$ is an integrable rational caustic, we conclude
that $\LE_{q}(\tet)$ is actually constant in $\tet$.  
In fact, all periodic orbits belonging to a smooth
one-parameter family have the same, constant, perimeter.

Let us denote with $\POm_0(\tet)\in\partial\Om$ the lift of
$\PE_0(\tet)\in\partial\Om_0$ to $\partial \Om$. Since $\Om$ is strictly
convex, for each $\tet\in \T$, there is a convex $q$-gon
starting at $\POm_0(\tet)$ of maximal perimeter.  Denote
its vertices by $\POm_k(\tet),\ k=0,\cdots,q-1$ and its
perimeter by
\begin{align*}
  \LOm_{q}(\tet)=\sum_{k=0}^{q-1} \|\POm_{k+1}(\tet)-\POm_{k}(\tet)\|.
\end{align*}
If, moreover, $\Om$ admits an integrable rational caustic of
rotation number $1/q$, then the points
$\POm_0(\tet),\cdots,\POm_{q-1}(\tet)$ are actually the
reflection points of the $q$-periodic orbit of rotation
number $1/q$ starting at $\POm_{0}(\tet)$.  By the arguments
given above, $\LOm_q(\tet)$ is also constant.
\begin{theorem}\label{perturbation-perimeter}
  Let $\Om_0 = \cE_e$ be an ellipse of eccentricity
  $0\le e<1$ and perimeter $1$, and let $(S,\phif)$ be the
  corresponding functions defined above.  Then there is
  $c=c(e)>0$ such that for any integer $q$, $q>2$ and $C^5$
  deformation $\partial \Om:=\cE_e+\bfn$ so that $\Om$
  admits an integrable rational caustic $\Gamma_{1/q}$ of
  rotation number $1/q$ and $\ q^8\|\bfn\|_{C^1}<c$:
  \begin{align*}
    \max_\tet \left| L_{q}'(\tet)-L_{q}^0(\tet)-
    \cD(\bfn,S,\phif;1/q)(\tet)
    \right| \le C\,q^8  \|\bfn\|_{C^1}^2,
  \end{align*}
  where $C = C(e,\|\bfn\|_{C^5})$ depends on the
  eccentricity $e$ and monotonically on the $C^5$-norm of
  $\bfn$, but is independent of $q$.
\end{theorem}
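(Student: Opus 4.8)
The plan is to read both $L^0_q(\tet)$ and $L'_q(\tet)$ as critical values of the perimeter functional and to compare them through a single common ``frozen'' configuration. Fix $\tet$ and parametrize a $q$-gon inscribed in $\Om=\cE_e+\bfn$ by the action--angle coordinates $\boldsymbol\tet=(\tet_1,\dots,\tet_{q-1})$ of its free vertices: for $1\le k\le q-1$ the $k$-th vertex is the lift to $\partial\Om$ of $P_{S_q(\tet_k)}$, while the $0$-th vertex is frozen at the lift $\POm_0(\tet)$ of $\PE_0(\tet)=P_{S_q(\tet)}$ (so $\tet_0\equiv\tet$). Write $\mathcal F_\Om(\boldsymbol\tet)$ for the resulting perimeter and $\mathcal F_{\Om_0}(\boldsymbol\tet)$ for the analogous functional on $\Om_0=\cE_e$, where the lift is the identity. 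By the variational description recalled above, $L^0_q(\tet)=\mathcal F_{\Om_0}(\boldsymbol\tet^0)$ with $\boldsymbol\tet^0=(\tet+1/q,\dots,\tet+(q-1)/q)$ a critical point of $\mathcal F_{\Om_0}$ (the $1/q$-periodic orbit through $\PE_0(\tet)$), while $L'_q(\tet)=\mathcal F_\Om(\boldsymbol\tet')$ where $\boldsymbol\tet'$ encodes the maximal-perimeter convex $q$-gon through $\POm_0(\tet)$; being a maximum, $\boldsymbol\tet'$ is a critical point of $\mathcal F_\Om$, and by the integrability hypothesis it is the genuine $1/q$-periodic orbit of $\Om$. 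Finally, set $\tilde L_q(\tet):=\mathcal F_\Om(\boldsymbol\tet^0)$, the perimeter of the polygon whose vertices are the lifts of the unperturbed orbit $\PE_0(\tet),\dots,\PE_{q-1}(\tet)$; since this polygon is admissible, $\tilde L_q(\tet)\le L'_q(\tet)$.

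\emph{Step 1: the linear term.} Differentiating $\mathcal F_{\Om_0+t\bfn}(\boldsymbol\tet^0)$ at $t=0$, each vertex $\PE_k(\tet)$ moves along the normal by $\bfn(S_q(\tet+k/q))$, and since at the periodic orbit the two edges meeting at that vertex form equal angles $\phif_q(\tet+k/q)$ with the tangent, the first variation of the perimeter is $2\sum_{k=1}^q\bfn(S_q(\tet+k/q))\sin\phif_q(\tet+k/q)=\cD(\bfn,S,\phif;1/q)(\tet)$. The second $t$-derivative of the length of each edge is $O(\|\bfn\|_{C^1}^2/q)$: the relative displacement of the two endpoints of an edge of the $1/q$-orbit is $O(\|\bfn\|_{C^1}/q)$, because consecutive vertices are $\asymp 1/q$ apart for $e$ small, while the edge length stays $\asymp 1/q$. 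Summing the $q$ edges,
\begin{align*}
  \tilde L_q(\tet)-L^0_q(\tet)=\cD(\bfn,S,\phif;1/q)(\tet)+O\bigl(\|\bfn\|_{C^1}^2\bigr),
\end{align*}
with implied constant depending only on $e$ and $\|\bfn\|_{C^2}$.

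\emph{Step 2: the quadratic correction.} It remains to bound $L'_q(\tet)-\tilde L_q(\tet)=\mathcal F_\Om(\boldsymbol\tet')-\mathcal F_\Om(\boldsymbol\tet^0)$. Since $\nabla\mathcal F_\Om-\nabla\mathcal F_{\Om_0}=O(\|\bfn\|_{C^1})$ componentwise and $\nabla\mathcal F_\Om(\boldsymbol\tet')=0$, one has $\|\nabla\mathcal F_{\Om_0}(\boldsymbol\tet')\|=O(\sqrt q\,\|\bfn\|_{C^1})$ (Euclidean norm on $\R^{q-1}$); since also $\nabla\mathcal F_{\Om_0}(\boldsymbol\tet^0)=0$, the mean value theorem along $[\boldsymbol\tet^0,\boldsymbol\tet']$ gives
\begin{align*}
  \|\boldsymbol\tet'-\boldsymbol\tet^0\|=O\bigl(\|H_0^{-1}\|_{\mathrm{op}}\,\sqrt q\,\|\bfn\|_{C^1}\bigr),
\end{align*}
where $H_0$ denotes the (tridiagonal) Hessian of $\mathcal F_{\Om_0}$ along that segment. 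Taylor-expanding $\mathcal F_\Om$ about its critical point $\boldsymbol\tet'$,
\begin{align*}
  L'_q(\tet)-\tilde L_q(\tet)=-\tfrac12\,(\boldsymbol\tet^0-\boldsymbol\tet')^{\top}H_\Om\,(\boldsymbol\tet^0-\boldsymbol\tet')=O\bigl(\|H_\Om\|_{\mathrm{op}}\,\|\boldsymbol\tet'-\boldsymbol\tet^0\|^2\bigr),
\end{align*}
with $H_\Om$ the Hessian of $\mathcal F_\Om$ at an intermediate point. The entries of $H_0$ and $H_\Om$ are $\asymp 1/q$ (curvatures are bounded and edge lengths are $\asymp 1/q$, the chords being nearly grazing), so $\|H_\Om\|_{\mathrm{op}}=O(1/q)$ and, combining the three displays,
\begin{align*}
  \bigl|L'_q(\tet)-L^0_q(\tet)-\cD(\bfn,S,\phif;1/q)(\tet)\bigr|=O\bigl(\|H_0^{-1}\|_{\mathrm{op}}^2\,\|\bfn\|_{C^1}^2\bigr)+O\bigl(\|\bfn\|_{C^1}^2\bigr).
\end{align*}

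\emph{The crux} is to bound $\|H_0^{-1}\|_{\mathrm{op}}$ by a fixed power of $q$ (times a constant depending on $e$), \emph{uniformly in $\tet$ and in $e$ small}: this is a quantitative non-degeneracy statement for the second variation of the perimeter at the $1/q$-periodic orbit of the ellipse. For $e=0$ the restricted Hessian equals $\sin(\pi/q)$ times the Dirichlet tridiagonal matrix $\mathrm{tridiag}(\tfrac12,-1,\tfrac12)$, with spectrum $\{-2\sin^2(j\pi/2q):1\le j\le q-1\}$, so $\|H_0^{-1}\|_{\mathrm{op}}\asymp q^{3}$ and the estimate above already bounds the error by a finite power of $q$ times $\|\bfn\|_{C^1}^2$; the only degeneracy of the \emph{unrestricted} Hessian is along the one-parameter family of $1/q$-orbits, which is transverse to the fixed-first-vertex hyperplane, whence the non-degeneracy of $H_0$. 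For $0<e\le e_0$ one cannot perturb the $e=0$ model crudely, because the $O(e)$-corrections to the entries of $H_0$ are, for large $q$, far larger than the smallest eigenvalue of the model matrix; instead one must use the explicit action--angle description of the ellipse (the analytic functions $S_q$, $\phif_q$ and the underlying elliptic integrals) to show that the genuine billiard second variation still has no anomalously small eigenvalue, uniformly in the base point and for $e$ small --- and this is precisely the point at which the smallness of the eccentricity is used in an essential, verifiable way (the explicit condition on $e$ alluded to above). Tracking the remaining powers of $q$ through $\|H_0^{-1}\|_{\mathrm{op}}$, $\|H_\Om\|_{\mathrm{op}}$, the $\ell^2$-to-$\ell^\infty$ passages and the change between action--angle and arc-length coordinates produces the stated exponent $q^8$, with no claim of optimality. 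The smallness hypothesis $q^8\|\bfn\|_{C^1}<c$ is what keeps the maximal-perimeter polygon inside the tubular neighbourhood, convex, equal to the nearby $1/q$-orbit, and with $H_\Om$ invertible along $[\boldsymbol\tet^0,\boldsymbol\tet']$; the few extra derivatives of $\bfn$ needed to make $H_\Om$ and the second-order remainders small and uniform, given only that $\|\bfn\|_{C^1}$ is small, account for the $C^5$ hypothesis and the dependence of $C$ on $\|\bfn\|_{C^5}$.
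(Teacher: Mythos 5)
Your plan is a genuinely different route from the paper's, and its skeleton is reasonable: freezing the first vertex, reading both $L_q^0$ and $L_q'$ as critical values of a perimeter functional, computing the linear term in the deformation parameter via the reflection law (your Step~1 is correct, including the non-obvious cancellation that gives $O(\|\bfn\|_{C^1}^2/q)$ per edge), and controlling the discrepancy between the two critical configurations by an implicit-function / Hessian argument. However, the proposal has a genuine gap at precisely the place you flag as ``the crux'': you never prove, for $0<e<1$, that the constrained Hessian $H_0$ of $\mathcal F_{\Om_0}$ is invertible with $\|H_0^{-1}\|_{\mathrm{op}}$ bounded by a fixed power of $q$ \emph{uniformly in the base point} $\tet$. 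You carry out the $e=0$ computation (which is correct: the smallest eigenvalue of the circle Hessian is $\asymp q^{-3}$) and then note that the naive perturbation from $e=0$ fails, but the replacement argument for $e>0$ is only gestured at. Without it, the chain $\|\boldsymbol\tet'-\boldsymbol\tet^0\| \lesssim \|H_0^{-1}\|_{\mathrm{op}}\sqrt q\,\|\bfn\|_{C^1}$ and the subsequent quadratic Taylor bound are not established, and the theorem is not proved.

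There is also a misattribution worth correcting. You write that the small-eccentricity hypothesis of the Main Theorem is ``precisely'' what is needed to make the Hessian bound work. But Theorem~\ref{perturbation-perimeter} is stated and proved in the paper for \emph{every} $0\le e<1$, with the constant $c(e)$ degenerating as $e\to 1$ but positive throughout; smallness of $e$ enters the paper's argument only later, through Lemma~\ref{error-estimate-2} and the condition~\eqref{eccentr-smallness-assumption} that makes $(e_j)$ a Riesz-type basis of $L^2$. If your Hessian route really did require $e$ small, it would prove a strictly weaker statement.

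The paper avoids the Hessian altogether by exploiting integrability of $\cE_e$ more directly: it projects the perturbed periodic orbit onto $\cE_e$, bounds the jump in the pseudo-reflection angles $\phib_k^\pm$ (Lemma~\ref{lem:deviated-angle}), uses the monotone twist structure together with the explicit first integral $I(\psi,\phi)=\cos^2\phi+\cosh^{-2}\mu_0\cos^2\psi\sin^2\phi$ to show that the projected orbit cannot drift more than $O(q^3\|\bfn\|_{C^1})$ from the unperturbed orbit (this replaces your implicit-function estimate for $\|\boldsymbol\tet'-\boldsymbol\tet^0\|$), and then compares edge lengths by an elementary cosine-rule computation (Lemma~\ref{prop:one-leg-perimeter}). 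The first-integral argument is exactly the quantitative non-degeneracy input you are missing; it substitutes for $\|H_0^{-1}\|_{\mathrm{op}}=O(q^3)$ and, crucially, works for all $e<1$ because it uses the explicit conserved quantity of the elliptical billiard rather than a generic spectral estimate. If you want to complete your variational route, the cleanest fix is probably to derive the Hessian bound \emph{from} that same first integral (relating the second variation to the monodromy $Df^q$, which is a shear tangent to the invariant curve with a shear rate controlled by $\partial_\om X$), rather than attempting a direct spectral perturbation from the circle case.
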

\begin{remark*}
  Notice that in~\cite[Proposition 11]{PR} a different
  (weaker, but cleaner) version of this statement is given,
  where it suffices to know only $S(\tet,\om)$.  We also
  point out that $c(e)\to 0$ as $e\to 1$.
\end{remark*}
\begin{proof}[Proof of Theorem~\ref{perturbation-perimeter}]
  Let $\al_k(\tet)$ be the angle between
  $\POm_k(\tet) -\PE_k(\tet)$ and the positive tangent to
  $\cE_e$ at $\PE_k(\tet)$ (see
  Figure~\ref{fig:perturbed-orbits}).  We assume
  $\al_k(\tet)$ to be positive towards the exterior of
  $\cE_e$, i.e. if $\POm_k(\tet)$ is outside of $\cE_e$, then
  $\al_k(\tet) \in (0,\pi)$.  Introduce the displacements
  \begin{align*}
    v_k(\tet)= \|\POm_k(\tet) -\PE_k(\tet)\|
  \end{align*}
  and let $\phi_k(\tet) = \phif_q(\tet+k/q)$.  By definition
  of action-angle coordinates, the edge
  $ \PE_{k+1}(\tet) -\PE_k(\tet)$ has reflection angle
  $\varphi_k(\tet)$ at $\PE_k(\tet)$ and
  $\varphi_{k+1}(\tet)$ at $\PE_{k+1}(\tet)$ respectively.
  Finally, let us introduce the notation
  $\lE_k(\tet) = \|\PE_{k+1}(\tet)- \PE_k(\tet)\|$ and
  $\lOm_k(\tet) = \|\POm_{k+1}(\tet) - \POm_k(\tet)\|$.
  Observe that by Corollary~\ref{edge-uniform}, for each
  $k=0,\cdots,q-1$ we have
  \begin{align}\label{e_boundlE}
    \frac{1}{\cb q} \le \lOm_{k}(\tet)\le
    \frac{\cb}{q}\ \text{ for some }\
    \cb=\cb(e,\|\bfn\|_{C^5}) > 1,
  \end{align}
  and $\cb$ depends monotonically on $\|\bfn\|_{C^5}$.  For
  $k=0,\cdots,q-1$, project $\POm_k(\tet)$ onto $\cE_e$ by the
  orthogonal projection and denote the projected point by
  $\POmb_k(\tet)$.  Observe that, by construction,
  $\POmb_0(\tet) = \PE_0(\tet)$.  Denote, moreover, with
  $\phib^+_k$ (resp. $\phib^-_k$) the angle between
  $\POmb_{k+1}(\tet)-\POmb_k(\tet)$
  (resp. $\POmb_{k}(\tet)-\POmb_{k-1}(\tet)$) and the positive
  (resp. negative) tangent to $\cE_e$ at $\POmb_k(\tet)$ (see
  Figure~\ref{fig:reflect}).

  \begin{lemma}\label{lem:deviated-angle}
    Let $\cb$ be the constant appearing in~\eqref{e_boundlE};
    for any $k=0,\cdots,q-1$: %
    \begin{align*}
      |\phib_k^+-\phib_k^-|\le
      5\cb \,q\,\|\bfn\|_{C^1}.
    \end{align*}
  \end{lemma}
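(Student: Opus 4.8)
The plan is to estimate how much the orthogonal projections $\POmb_k(\tet)$ of the true perturbed orbit deviate from the unperturbed periodic orbit $\PE_k(\tet)$ on $\cE_e$, and then read off the resulting discrepancy between the incoming and outgoing angles at each vertex. First I would record that, since $\Om$ admits the integrable rational caustic $\hGm_{1/q}$, the points $\POm_k(\tet)$ form a genuine $q$-periodic billiard orbit in $\Om$, so the edges $\POm_{k+1}-\POm_k$ reflect properly off $\partial\Om=\partial\Om_0+\bfn$. The projected points $\POmb_k$ then lie on $\cE_e$ at arc-length distance $O(\|\bfn\|_{C^0})$ from $\PE_k$, with the angular parameters $\tet$ of consecutive $\POmb_k$ differing from the unperturbed spacing $1/q$ by an amount controlled by $\|\bfn\|_{C^1}$ (the $C^1$ norm enters because one differentiates the projection/lift correspondence and the boundary to relate the perturbation of the chord directions to $\bfn$).

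\smallskip

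The heart of the estimate is the following geometric picture at the vertex $\POmb_k$. The true reflection law holds for the chord $\POm_{k+1}-\POm_k$ at the point $\POm_k\in\partial\Om$, i.e. with respect to the \emph{tangent to $\partial\Om$} there, not the tangent to $\cE_e$. Two effects then produce the gap $\phib_k^+-\phib_k^-$: (i) moving the reflection point from $\POm_k$ (on $\partial\Om$) to its foot $\POmb_k$ (on $\cE_e$) rotates the reference tangent by an angle that is $O(\|\bfn\|_{C^1})$, since the normal displacement is $v_k=O(\|\bfn\|_{C^0})$ but the relevant quantity is the \emph{change of slope}, governed by the derivative of $\bfn$ along the boundary; (ii) replacing the chords $\POm_{k\pm1}-\POm_k$ by the chords $\POmb_{k\pm1}-\POmb_k$ between the projected points tilts each chord by $O(v_k/\lOm_k)+O(v_{k\pm1}/\lOm_k)$, and by~\eqref{e_boundlE} we have $\lOm_k\ge 1/(\cb q)$, so each such tilt is $O(\cb q\,\|\bfn\|_{C^0})$, hence $O(\cb q\,\|\bfn\|_{C^1})$. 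Adding the three contributions (the slope change of the tangent, the tilt of the forward chord, the tilt of the backward chord) and tracking constants, the dominant term is the chord-tilt term of size $\cb q\|\bfn\|_{C^1}$, and a careful bookkeeping of the numerical factors yields the claimed bound $|\phib_k^+-\phib_k^-|\le 5\cb q\|\bfn\|_{C^1}$.

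\smallskip

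I expect the main obstacle to be the careful propagation of constants rather than any conceptual difficulty: one must ensure that the comparison of $\phib_k^\pm$ (angles of the projected-chord broken line on $\cE_e$) with the true reflection angles of $\POm_k$ on $\partial\Om$ is done uniformly in $k$ and $q$, that the factor $q$ enters only through the lower bound $\lOm_k\ge 1/(\cb q)$ from Corollary~\ref{edge-uniform}, and that the curvature of $\cE_e$ (bounded in terms of $e$, hence absorbed into $\cb$) does not introduce an additional uncontrolled factor. A secondary point requiring care is that, because $q^8\|\bfn\|_{C^1}<c$, all the displacements $v_k$ are small enough that the foot-point projection is well-defined and the linearizations above are valid with errors of strictly higher order, which is why the lemma is stated with a clean linear bound and the quadratic remainders are deferred to the surrounding proof of Theorem~\ref{perturbation-perimeter}.
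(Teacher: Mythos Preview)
Your second paragraph is essentially the paper's proof: at $\POm_k$ the true reflection law holds on $\partial\Om$, so $\phib_k^+-\phib_k^-$ is bounded by the tangent rotation between $\partial\Om$ at $\POm_k$ and $\cE_e$ at $\POmb_k$ (contributing $2\|\bfn\|_{C^1}$) plus the tilts of the forward and backward chords under projection (each $\le 4\cb q\|\bfn\|_{C^0}$ via $\|\POm_j-\POmb_j\|\le\|\bfn\|_{C^0}$ and~\eqref{e_boundlE}); since $\cb q>2$ this yields the constant $5$.

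Two points to clean up. First, your opening claim that the projected points $\POmb_k$ lie at arc-length distance $O(\|\bfn\|_{C^0})$ from the \emph{unperturbed} orbit $\PE_k$ is both false and unnecessary: the orthogonal projection places $\POmb_k$ within $\|\bfn\|_{C^0}$ of $\POm_k$, not of $\PE_k$, and the bound on $\|\POm_k-\PE_k\|$ is only $Cq^3\|\bfn\|_{C^1}$, established in the \emph{next} lemma using the argument you just proved. The present lemma never refers to $\PE_k$. Second, in the paper's notation $v_k=\|\POm_k-\PE_k\|$, so writing the chord tilt as $O(v_k/\lOm_k)$ is wrong; the displacement you actually need is $\|\POm_j-\POmb_j\|\le\|\bfn\|_{C^0}$, which is what makes the $O(\cb q\|\bfn\|_{C^0})$ conclusion valid.
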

 \begin{figure}[h]
      \begin{center}
        \includegraphics{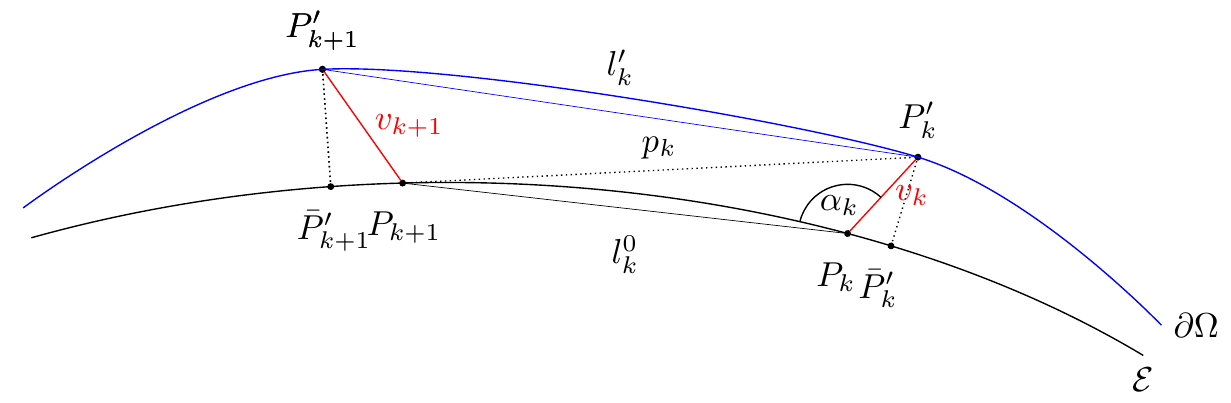}
      \end{center}
      \caption{Two orbits: unperturbed (in black) and
        perturbed (in blue)}
      \label{fig:perturbed-orbits}
    \end{figure}
  \begin{proof}
    Since $\|\POm_k-\POmb_k\|\le \|\bfn\|_{C^0}$ for any
    $k = 0,\cdots,q-1$, the angle between the $k$-th perturbed
    edge and the $k$-th projected edge satisfies
    \begin{align*}
      \sphericalangle \{\POm_{k}(\tet) -\POm_{k+1}(\tet),
      \POmb_{k}(\tet) - \POmb_{k+1}(\tet)\}
      \le
      \frac{2\|\bfn\|_{C^0}}{\lOm_k(\tet)-2\|\bfn\|_{C^0}}\le
      {4\cb \,q\,\|\bfn\|_{C^0}}
    \end{align*}
    where in the last inequality we have
    used~\eqref{e_boundlE}: in fact, we know
    $\lOm_k(\tet) > \cb/q$ and by our assumptions on $\bfn$ we
    have $\|\bfn\|_{C^0}\le\|\bfn\|_{C^1} < c/q^8$, thus,
    if $c < 1/\cb$, since $q > 2$:
    \begin{align*}
      \lOm_k(\tet)-2\|\bfn\|_{C^0} \ge \lOm_k(\tet)/2 >
      1/(2\cb q).
    \end{align*}

    Since $\Om$ has an integrable rational caustic
    $\Gamma_{1/q}$ of rotation number $1/q$, the collection
    $\POm_k(\tet),\ k=0,\cdots,q-1$ corresponds to a
    $q$-periodic orbit, thus, the angle of incidence at
    $\POm_{k}(\tet)$ of $\POm_{k}(\tet) -\POm_{k+1}(\tet)$
    equals the angle of reflection of
    $P'_{k-1}(\tet) -P'_{k}(\tet)$.  See
    Figure~\ref{fig:reflect}: the angle between the tangent
    to $\partial \Om$ at $\POm_k(\tet)$ and the tangent to
    $\cE_e$ at the projected point $\POmb_k(\tet)$ is
    bounded above by $\bfn'(S_q(\tet+k/q))$, hence by
    $\|\bfn\|_{C^1}$.  Therefore, adding the two deviations
    coming from the discrepancy of the tangents to
    $\partial \Om$ (resp.\ $\cE_e$) and the discrepancy of
    end-points $\POm_i(\tet)$ (resp.\ $\POmb_i(\tet)$) with
    $i=k\pm 1,k$ we get that
    \begin{align*}
      |\phib^+_k-\phib^-_k|\le 4\cb\, q\, \|\bfn\|_{C^0}+ 2 \|\bfn\|_{C^1},
    \end{align*}
    from which we conclude our proof.
  \end{proof}
   \begin{figure}[h]
      \begin{center}
        \includegraphics{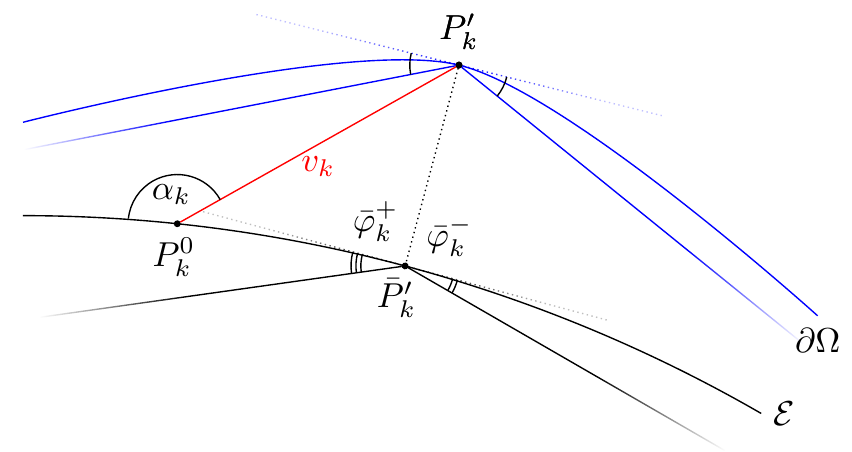}
      \end{center}
      \caption{ Reflection angles: in blue (above) the trajectory of
        the periodic orbit given by $\POm_0,\cdots,\POm_{q-1}$; in
        black (below) the pseudo-orbit given by $\POmb_0,\cdots,\POmb_{q-1}$.}
      \label{fig:reflect}
    \end{figure}
  \begin{lemma}
    For each $k=0,\cdots,q-1$ let $\bar\tet_k$ be so that
    $\POmb_k(\tet) = P_{S_q(\bar\tet_k)}$.  Then there
    exists $C=C(e,\|\bfn\|_{C^5})$ so that, in the above
    notations, {for any \ $k = 0,\cdots,q-1$}:
    \begin{align}
      \label{v-bound}
      |\bar\tet_k-\tet_k|&\le Cq^3\|\bfn\|_{C^1},&
      v_k(\tet)&\le Cq^3\|\bfn\|_{C^1}.
    \end{align}
  \end{lemma}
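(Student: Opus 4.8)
The plan is to pass to the action–angle coordinates $(\tet,\om)$ of the billiard in $\cE_e$, namely those defined by $(s,\phi)=(S(\tet;\om),\phif(\tet;\om))$, in which by~\eqref{e_dynamicalS} the billiard map of $\cE_e$ is the \emph{affine} translation $(\tet,\om)\mapsto(\tet+\om,\om)$, with constant differential $\matr{1}{1}{0}{1}$. In these coordinates one regards the projected points $\POmb_0(\tet),\dots,\POmb_{q-1}(\tet)$ as a \emph{closed pseudo-orbit}: each segment $[\POmb_k(\tet),\POmb_{k+1}(\tet)]$ is a genuine chord of the convex curve $\cE_e$, so the position of $\POmb_{k+1}(\tet)$ and the angle its incoming edge makes with $\partial\cE_e$ there are \emph{exactly} the image under $f$ of the outgoing state at $\POmb_k(\tet)$; the sole failure of the billiard law is that at each vertex the outgoing angle $\phib^+_{k+1}$ differs from the incoming one $\phib^-_{k+1}$, a defect controlled by Lemma~\ref{lem:deviated-angle}. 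Since the dynamics is affine the pseudo-orbit recursion solves explicitly, and its single free parameter is pinned down by the fact that the pseudo-orbit closes up after $q$ steps — $\POmb_q(\tet)=\POmb_0(\tet)$, with winding number one — because $\POm_0(\tet),\dots,\POm_{q-1}(\tet)$ is a genuine $q$-periodic orbit of rotation number $1/q$.

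Concretely, for $k=0,\dots,q-1$ I would let $z_k=(\bar\tet_k,\om_k)$ be the outgoing state at $\POmb_k(\tet)$ in the coordinates $(\tet,\om)$; its position component is the $\bar\tet_k$ of the statement, and $\bar\tet_0=\tet$ since $\POmb_0(\tet)=\PE_0(\tet)$. By the above, $z_{k+1}=f(z_k)+\delta_k$ where $\delta_k$ has $(s,\phi)$-coordinates $(0,\phib^+_{k+1}-\phib^-_{k+1})$; rewriting it in action–angle coordinates and applying Lemma~\ref{lem:deviated-angle} gives $\|\delta_k\|\le C_1\,q\,\|\bfn\|_{C^1}$ with $C_1=C_1(e,\|\bfn\|_{C^5})$ independent of $q$ — here one uses Corollary~\ref{edge-uniform} together with the analyticity and the $e=0$ normalization of $S,\phif$ to bound the differential of $(S,\phif)$ and of its inverse by $q$-independent constants near $\hGm_{1/q}$, a short bootstrap based on $q^8\|\bfn\|_{C^1}<c$ keeping every $z_k$ in the relevant neighbourhood.

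Solving this recursion and writing $\delta_k=(\delta^\tet_k,\delta^\om_k)$ gives $\om_k=\om_0+\sum_{j<k}\delta^\om_j$ and, with $\tet_k=\tet+k/q$ the action–angle parameter of $\PE_k(\tet)$,
\begin{align*}
  \bar\tet_k-\tet_k=k\Bigl(\om_0-\frac1q\Bigr)+\sum_{j<k}\delta^\tet_j+\sum_{0\le i<j<k}\delta^\om_i .
\end{align*}
Setting $k=q$ and using $\bar\tet_q=\tet+1=\tet_q$ forces $\om_0-1/q=O(q^2\|\bfn\|_{C^1})$ (the two sums have $q$ and $\binom q2$ terms, each $O(q\|\bfn\|_{C^1})$); feeding this back and using $k\le q$ yields $|\bar\tet_k-\tet_k|\le Cq^3\|\bfn\|_{C^1}$. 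Then $v_k(\tet)\le\|\POm_k(\tet)-\POmb_k(\tet)\|+\|\POmb_k(\tet)-\PE_k(\tet)\|\le\|\bfn\|_{C^0}+(\sup_\tet|\partial_\tet S_q|)\,|\bar\tet_k-\tet_k|=O(q^3\|\bfn\|_{C^1})$, again because $\partial_\tet S_q$ is bounded uniformly in $q$. The only delicate point — and the place where small eccentricity is convenient — is this uniformity in $q$: as $q\to\infty$ the caustic $\hGm_{1/q}$ degenerates onto $\partial\cE_e$, so one must check that $(S,\phif)$ and its inverse stay Lipschitz with $q$-independent constants on a fixed-size neighbourhood of $\hGm_{1/q}$ in the $(s,\phi)$-metric, which is exactly what Corollary~\ref{edge-uniform} and Tabanov's regularity of the action–angle coordinates provide. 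This also explains the exponent: one power of $q$ from the per-vertex defect of Lemma~\ref{lem:deviated-angle}, and two more from summing the constant shear over the $q$ reflections needed to close the orbit.
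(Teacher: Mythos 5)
Your proposal is correct and exploits the same underlying mechanism as the paper — the parabolic (shear) behaviour of the ellipse billiard near a rational caustic — but packages the argument differently. The paper never passes to a linear normal form; it works directly in $(s,\phi)$ with the elliptic first integral $I(\psi,\phi)$, uses $|\partial_\phi I|\asymp\phi\asymp1/q$ to convert each angular defect of size $q\|\bfn\|_{C^1}$ into an $I$-drift of size $\|\bfn\|_{C^1}$, anchors the value of $I$ via a no-crossing argument (there is an index $\ksp$ with $\phib_{\ksp}^-\le\phif_q(\bar\tet_{\ksp})\le\phib_{\ksp}^+$, else the twist condition contradicts the closure relation $\bar\tet_q=\bar\tet_0+1$), accumulates the drift over $q$ steps to get $|I_0^\pm-I_*|=O(q\|\bfn\|_{C^1})$, translates this back into $|\phib_0^\pm-\phi_0|=O(q^2\|\bfn\|_{C^1})$, and finally induces once more to reach the $q^3$ position bound. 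You instead move to action--angle coordinates, solve the perturbed shear recursion in closed form, and pin the single constant of integration $\om_0$ by imposing $\bar\tet_q=\tet+1$ directly. What the paper's route buys is that it avoids having to invert the change of variables $(\tet,\om)\mapsto(s,\phi)$ uniformly in $q$: everything is phrased through the explicit function $I$ and the sign of $\partial_\phi I$. What your route buys is a one-shot, transparent accounting of the three powers of $q$ (one from the per-vertex defect, two from the summed shear) and a shorter path that removes the separate no-crossing/anchoring step.

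One piece of bookkeeping needs to be made precise before the argument is airtight. The $\bar\tet_k$ of the statement is the $S_q$-preimage of $\POmb_k$, i.e.\ it lives on the $\om=1/q$ leaf, whereas the position component of your outgoing state $z_k$ in action--angle coordinates lives on the leaf $\om=\om_k^{(+)}\ne1/q$; these two $\tet$-values differ by $O(|\om_k^{(+)}-1/q|)$, which your recursion only controls a posteriori by $O(q^2\|\bfn\|_{C^1})$. So the sequence you actually solve for is not the statement's $\bar\tet_k$ but the $\tet$-component of $z_k$, and you must add at the end a comparison between the two; likewise $\bar\tet_0=\tet$ holds for the statement's $\bar\tet_0$ but not for the $\tet$-component of $z_0$, so the initial condition of the recursion carries the same $O(|\om_0^{(+)}-1/q|)$ ambiguity. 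Since all these corrections are $O(q^2\|\bfn\|_{C^1})\ll q^3\|\bfn\|_{C^1}$ they are harmlessly absorbed, but they should be stated. With that repair the derivation of $|\om_0-1/q|=O(q^2\|\bfn\|_{C^1})$, the $q^3$ bound on $|\bar\tet_k-\tet_k|$, and the triangle-inequality deduction of the $v_k$ bound are all sound.
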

  \begin{proof} The basic idea of the proof is to consider
    the worst case scenario for the deviation of the
    reflection angles $\phib^{\pm}_k(\tet)$ from
    $\phi_k(\tet)$.  Since, unless $\cE_e$ is a circle, the
    reflection angles $\phi_k$ vary depending on the
    reflection point\footnote{  Reflection angles are
      smaller close to the end-points of the minor axis and
      larger close to the end-points of the major axis}, it
    is more convenient to keep track of a \emph{first
      integral}, which is constant along any orbit on the
    ellipse $\cE_e$ and, therefore, cannot change too
    rapidly for the perturbed domain $\Om$.  We now
    quantitatively explain this phenomenon.  Recall that for
    the ellipse one can explicitly define a conserved
    quantity (a first integral), as follows.  For
    simplicity, assume $\cE_e$ is centered at the origin and
    that the major axis is horizontal; let
    \begin{align*}
      \cE_e=\{x^2+y^2/(1-e^2)=a_e^2\},
    \end{align*}
    where $a_e$ is the semi-major axis, given by
    $a_e = 1/(4E(e))$, and $E(e)$ is the complete elliptic
    integral of the second kind, so that the ellipse $\cE_e$
    has, as we always assume, perimeter $1$.  Let us then
    introduce the so-called \emph{elliptical coordinates}
    $(\mu,\psi)$ on $\R^2$ as follows:
    \begin{align*}
      x&=h\cdot \cosh \mu \cdot \cos \psi,&
      y&=h\cdot \sinh \mu \cdot \sin \psi
    \end{align*}
    where
    $h^2=a_e^2e^2,\ 0\le \mu <\infty,\ 0\le \psi<2\pi$.  The
    family of co-focal ellipses $\mu=$const and hyperbolas
    $\psi=$const form an orthogonal net of curves\footnote{
      Observe that as $e\to 0$, we have $h\to 0$ and
      $\mu\to\infty$ so that $h\cosh\mu\to a_0$ and
      $h\sinh\mu \to a_0$, where $a_0 = 1/(2\pi)$.}.  The ellipse $\cE_e$ has the
    equation $\mu=\mu_0$, where $\cosh^2 \mu_0=e^{-2}>1$.
    Thus, the length parametrization $s$ of the ellipse can
    be given as a function of $\psi$, (see e.g. \cite{Ta}
    for an explicit formula): Then, the billiard map has a
    first integral given by
    \begin{align*}
      I(\psi,\phi)= \cos^2 \phi+ \frac{\cos^2 \psi}{\cosh^2 \mu_0}\sin^2
      \phi;
    \end{align*}
    observe that $I(\psi,\phi) = I(\psi,\pi-\phi)$.  Recall
    that $\tet$ denotes the action-angle parametrization of
    $\cE_e$ in action-angle coordinates with rotation number
    $1/q$ and $S_q$ is the change of variables to arc-length
    coordinates.  Since the elliptic angle $\psi$ is an
    analytic function of the arc-length parametrization $s$
    and $S$, in turn, is an analytic function of $\tet$ (see
    (\ref{e_dynamicalS})), we can define the first integral
    $I(\tet,\phi)$ in the $(\tet,\phi)$ coordinates. Notice
    that $\cosh^2 \mu_0> 1\ge\cos^2 \psi$; hence
    \begin{align*}
      \partial_\phi I(\psi,\phi) = \left(\frac{\cos^2 \psi}{\cosh^2 \mu_0}-1\right)\,\sin
      2\phi;
    \end{align*}
    observe that for any $\psi$, the function
    $I(\psi,\cdot)$ is strictly decreasing on $(0,\pi/2)$;
    moreover $|\partial_\phi I| < 1$ and
    \begin{align}\label{e_improvedPartialI}
      |\partial_\phi I|\in [1-\cosh^{-2} \mu_0,2]\, \phi\ \text{ for $\phi \in [0,\pi/6]$}.
    \end{align}
    Moreover, this holds in both $(\psi,\phi)$ and
    $(\tet,\phi)$ coordinates.

    Then we claim that there exists $\ksp$ so that
    $\phib_{\ksp}^- \le \phif_q(\bar\tet_\ksp) \le
    \phib_{\ksp}^+$.  Observe that by definition
    \begin{align*}
    f(S_q(\bar\tet_k),\phib_k^+) =
    (S_q(\bar\tet_{k+1}),\phib_{k+1}^-);
    \end{align*}
    by well-known properties of monotone twist maps, no
    orbit can cross the invariant curve $\Gm_{1/q}$, thus we
    obtain that if $\phib^+_k < \phif_q(\bar\tet_k)$
    (resp.\ $\phib^+_k > \phif_q(\bar\tet_k)$), then
    $\phib^-_{k+1} < \phif_q(\bar\tet_{k+1})$
    (resp.\ $\phib^-_{k+1} > \phif_q(\bar\tet_{k+1})$).  We
    conclude that if our claim does not hold, necessarily,
    either $\phib_k^+ < \phif_q(\bar\tet_{k})$ or
    $\phib_k^+ > \phif_q(\bar\tet_{k})$ for all
    $k = 0,\cdots,q-1$.  In the first case, the twist
    condition implies that
    $\bar\tet_{k+1} -\bar\tet_{k} < 1/q$; but this is a
    contradiction, since $\bar\tet_{q} = \bar\tet_{0}+1$
    (passing to the covering space $\R$).  Similar arguments
    in the second case also lead to a contradiction; this, in
    turn, implies our claim.  Moreover,
    Lemma~\ref{lem:deviated-angle} implies that
    \begin{align*}
      \phib^+_{\ksp}-\phif_q(\bar\tet_\ksp) \le
      5\cb \,q\,\|\bfn\|_{C^1} < 5q^{-7}.
    \end{align*}
    Define now the \emph{instant first integral}
    $I_k^\pm = I(\bar\tet_k,\phib_k^\pm)$; then
    $I^+_k = I^-_{k+1}$ and since
    \begin{align*}
      |I^+_k-I^-_k| \le
      \left|\int_{\phib_k^-}^{\phib_k^+}\partial_\phi I(\bar\tet_k,\phi)d\phi\right|
    \end{align*}
    and $\phif_q(\bar\tet_\ksp) < C(e)/q$ (applying
    Corollary~\ref{lazutkin-coordinates-curves} to $\cE_e$),
    Lemma~\ref{lem:deviated-angle}
    and~\eqref{e_improvedPartialI} allow us to conclude
    (possibly choosing a larger $C$) that
    \begin{align}\label{e_boundOnI}
      |I^+_\ksp-I_*|< C\,\|\bfn\|_{C^1},
    \end{align}
    where $I_* = I(\theta, \phi_0(\tet))$ and
    $C = C(e,\|\bfn\|_{C^5})$.  Inducing at most $q$ times
    and applying repeatedly the same argument we conclude
    that $|I_0^\pm-I_*| < Cq\|\bfn\|_{C^1}$.  This in turn
    implies that
    \begin{align*}
      |\phib_0^{\pm}(\tet)-\phi_0(\tet)| <Cq^2\|\bfn\|_{C^1}
    \end{align*}
    and inducing on $k$ and using again
    Lemma~\ref{lem:deviated-angle} we conclude (possibly
    choosing a larger $C$)
    \begin{align*}
      |\bar\theta_k-\tet_k|&<Cq^3\|\bfn\|_{C^1}.
    \end{align*}
    The second bound of~\eqref{v-bound} follows immediately
    by applying the triangle inequality.
  \end{proof}
  \begin{lemma}
    \label{prop:one-leg-perimeter}
    In the notations introduced above we have
    \begin{align}\label{e_one-leg-perimeter}
      \Big|\lOm_k(\tet)-\lE_k(\tet)&-v_k(\tet)\cos\left(\phi_k(\tet)+\al_k(\tet)\right)\\
                                   &+v_{k+1}(\tet)\cos\left(\phi_{k+1}(\tet)-\al_{k+1}(\tet)\right)\Big|\le 10\frac{v_k(\tet)^2+v_{k+1}(\tet)^2}{\lE_{k}(\tet)}.\notag
    \end{align}
  \end{lemma}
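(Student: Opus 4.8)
The plan is to treat \eqref{e_one-leg-perimeter} as a problem in elementary planar geometry: express each perturbed edge length in terms of the displacement vectors $\POm_k(\tet)-\PE_k(\tet)$ and reduce it to an exact algebraic identity together with the reverse triangle inequality.

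First I would write $\POm_k(\tet)=\PE_k(\tet)+v_k(\tet)\,u_k(\tet)$, where $u_k$ is the unit vector making angle $\al_k(\tet)$ with the positive tangent to $\cE_e$ at $\PE_k(\tet)$ (the orientation convention pinning $u_k$ down uniquely), and set $\hat e_k=(\PE_{k+1}(\tet)-\PE_k(\tet))/\lE_k(\tet)$ for the unit vector along the $k$-th unperturbed edge. With
\[
\xi_k=(\POm_{k+1}-\POm_k)-(\PE_{k+1}-\PE_k)=v_{k+1}u_{k+1}-v_ku_k ,
\]
we have $\POm_{k+1}-\POm_k=\lE_k\hat e_k+\xi_k$; expanding $\lOm_k^2=\|\lE_k\hat e_k+\xi_k\|^2$ and comparing it with $\lOm_k^2=\big(\lE_k+(\lOm_k-\lE_k)\big)^2$ yields the exact identity
\[
(\lOm_k-\lE_k)-\langle \hat e_k,\xi_k\rangle=\frac{\|\xi_k\|^2-(\lOm_k-\lE_k)^2}{2\lE_k}.
\]
Since $|\lOm_k-\lE_k|=\big|\,\|\POm_{k+1}-\POm_k\|-\|\PE_{k+1}-\PE_k\|\,\big|\le\|\xi_k\|$, the right-hand side lies between $0$ and $\|\xi_k\|^2/(2\lE_k)$, and $\|\xi_k\|^2=\|v_{k+1}u_{k+1}-v_ku_k\|^2\le(v_k+v_{k+1})^2\le2(v_k^2+v_{k+1}^2)$. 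Hence $\big|(\lOm_k-\lE_k)-\langle\hat e_k,\xi_k\rangle\big|\le(v_k^2+v_{k+1}^2)/\lE_k$, which already sits comfortably inside the bound claimed in \eqref{e_one-leg-perimeter} (the constant $10$ there is deliberately wasteful).

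It then remains to recognize $\langle\hat e_k,\xi_k\rangle=v_{k+1}\langle\hat e_k,u_{k+1}\rangle-v_k\langle\hat e_k,u_k\rangle$ as the trigonometric expression in \eqref{e_one-leg-perimeter}. This is a bookkeeping of angles at the two endpoints of the edge: recall that $\hat e_k$ makes angle $\phi_k$ with the positive tangent at $\PE_k$ and angle $\phi_{k+1}$ with the clockwise tangent at $\PE_{k+1}$, while $u_k$ and $u_{k+1}$ make angles $\al_k$ and $\al_{k+1}$ with the positive tangents at $\PE_k$ and $\PE_{k+1}$; applying the cosine angle-addition formula at each endpoint converts $\langle\hat e_k,u_k\rangle$ and $\langle\hat e_k,u_{k+1}\rangle$ into $\cos(\phi_k+\al_k)$ and $\cos(\phi_{k+1}-\al_{k+1})$ respectively (with the signs dictated by the orientation conventions, $\hat e_k$ pointing away from $\PE_k$ but towards $\PE_{k+1}$ along the orbit). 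Substituting back into the estimate above produces exactly \eqref{e_one-leg-perimeter}.

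I do not expect a genuine obstacle in this lemma: the content is the algebraic identity above (an expansion of a squared norm followed by completing the square) combined with $\big|\,\|a\|-\|b\|\,\big|\le\|a-b\|$. The only point that needs care is purely notational --- one must fix which side of the positive tangent $u_k$ lies on, and how $\al_k$ is signed there, so that the inner products $\langle\hat e_k,u_k\rangle$ and $\langle\hat e_k,u_{k+1}\rangle$ come out as $\cos(\phi_k+\al_k)$ and $\cos(\phi_{k+1}-\al_{k+1})$ rather than with spurious sign changes. Note also that no smallness hypothesis on $\bfn$ or $q$ actually enters this lemma: the inequality holds verbatim for arbitrary $\PE_k,\PE_{k+1},\POm_k,\POm_{k+1}$, and it is only the a priori bounds $v_k,v_{k+1}\le Cq^3\|\bfn\|_{C^1}$ from \eqref{v-bound}, together with $\lE_k$ being of order $1/q$, that make its right-hand side small --- which is what is needed in the proof of Theorem~\ref{perturbation-perimeter}.
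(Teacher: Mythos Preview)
Your argument is correct. Both you and the paper expand $\lOm_k^2-\lE_k^2$ and then divide by $\lOm_k+\lE_k$, but the paper does so via two successive applications of the Law of Cosines through the intermediate vertex $\POm_k$: the triangle $\bigtriangleup\PE_k\PE_{k+1}\POm_k$ produces $p_k^2=\|\POm_k-\PE_{k+1}\|^2$, and the triangle $\bigtriangleup\PE_{k+1}\POm_{k+1}\POm_k$ then gives $\lOm_k^2$ in terms of $p_k$, at the price of an auxiliary angle $\delta_{k+1}=\sphericalangle(\PE_k\PE_{k+1}\POm_k)$ that must be estimated separately by $|\sin\delta_{k+1}|\le v_k/\lE_k$. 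Your vectorial decomposition $\POm_{k+1}-\POm_k=\lE_k\hat e_k+\xi_k$ bypasses the intermediate point entirely, makes the remainder $(\|\xi_k\|^2-(\lOm_k-\lE_k)^2)/(2\lE_k)$ explicit and manifestly nonnegative, and delivers the constant $1$ in place of the paper's $10$. The two arguments are the same in spirit --- a first-order expansion of the Euclidean norm --- but yours is the tidier packaging; your closing observation that no smallness hypothesis on $\bfn$ or $q$ is actually used in this lemma is also correct.
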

  \begin{proof}%
    Let $\pEm_k(\tet) = \|\POm_k(\tet)-\PE_{k+1}(\tet)\|$;
    applying the Cosine Theorem to the triangle
    $\bigtriangleup \PE_k(\tet) \PE_{k+1}(\tet)\POm_k(\tet)$
	we have
    \begin{align*}
      \pEm_k(\tet)^2&= v_k(\tet)^2+\lE_k(\tet)^2 - 2 v_k(\tet)\lE_k(\tet) \cos
                      (\phi_k(\tet)+\al_k(\tet)).
    \end{align*}
    Likewise, applying it to the triangle
    $\bigtriangleup \PE_{k+1}(\tet) \POm_{k+1}(\tet) \POm_k(\tet)$ we have
    \begin{align*}
      \lOm_k(\tet)^2 =
      v_{k+1}(\tet)^2+\pEm_k(\tet)^2 + 2 v_{k+1}(\tet)\pEm_k(\tet)\cos
      (\phi_{k+1}(\tet)-\al_{k+1}(\tet)-\dt_{k+1}(\tet)),
    \end{align*}
    where $\dt_{k+1}(\tet)$ is the oriented angle
    $\sphericalangle(\PE_k(\tet) \PE_{k+1}(\tet)
    \POm_k(\tet))$.
    Combining the above expressions we get
    \begin{align}\label{e_lastEquality}
      \lOm_k(\tet)^2-\lE_k(\tet)^2 &= v_k(\tet)^2 + v_{k+1}(\tet)^2-2v_k(\tet)\lE_k(\tet)\cos
                                     (\phi_k(\tet)+\al_k(\tet))\\&\phantom =
                                                                   +2v_{k+1}(\tet)\pEm_k(\tet) \cos
                                                                   (\phi_{k+1}(\tet)-\al_{k+1}(\tet)-\dt_{k+1}(\tet)).\notag
    \end{align}
    Observe that by the triangle inequality:
    \begin{align*}
      \lE_k(\tet)-v_k(\tet)-v_{k+1}(\tet)\le \lOm_k(\tet),\pEm_k(\tet)\le \lE_k(\tet)+v_k(\tet)+v_{k+1}(\tet).
    \end{align*}
    Moreover, elementary geometry implies
    $|\sin\dt_{k+1}(\tet)|\le v_k(\tet)/\lE_k(\tet)$.
    Now~\eqref{e_one-leg-perimeter} immediately follows
    dividing both sides of~\eqref{e_lastEquality} by
    $\lOm_k(\tet)+\lE_k(\tet)$ and using the above
    estimates.
  \end{proof}
  We can now conclude the proof of
  Theorem~\ref{perturbation-perimeter}; observe that by
  definition $\LE_q(\tet) = \sum_{k = 0}^{q-1}\lE_k(\tet)$ and
  likewise $\LOm_q(\tet) = \sum_{k = 0}^{q-1}\lOm_k(\tet)$.
  By Lemma~\ref{prop:one-leg-perimeter} we thus gather:
  \begin{align*}
    \Big|\LOm_q(\tet)-\LE_q(\tet)&-\sum_{k = 0}^{q-1}v_k(\tet)\cos\left(\phi_k(\tet)+\al_k(\tet)\right)\\
                                 &+\sum_{k =
                                   0}^{q-1}v_{k+1}(\tet)\cos\left(\phi_{k+1}(\tet)-\al_{k+1}(\tet)\right)\Big|\le 20\sum_{k = 0}^{q-1}\frac{v_k(\tet)^2}{\lE_k(\tet)}.
  \end{align*}
  Observe that
  \begin{align*}
    \sum_{k=0}^{q-1} \Big[&-v_k(\tet) (\cos \phi_k(\tet)\cos
                            \al_k(\tet)- \sin \phi_k(\tet)\sin \al_k(\tet))\\
                          &+ v_{k+1}(\tet) (\cos \phi_{k+1}(\tet)\cos \al_{k+1}(\tet)+ \sin
                            \phi_{k+1}(\tet)\sin \al_{k+1}(\tet))\Big]\\
                          &= 2\sum_{k=0}^{q-1} v_k(\tet)\sin \phi_k(\tet)\sin
                            \al_k(\tet).
  \end{align*}
  Notice that, by~\eqref{v-bound}, we have
  $v_k(\tet)\sin \al_k(\tet)=
  \bfn(S_q(\tet+k/q))+O(q^6\|\bfn\|^2_{C^1})$.  Therefore,
  \begin{equation*}
    \left|\LOm_{q}(\tet)-\LE_{q}(\tet)-\sum_{k=0}^{q-1}
    \bfn(S_q(\tet+k/q))\sin \phif_q(\tet+k/q) \right|
    \le Cq^8\|\bfn\|_{C^1}^2.
  \end{equation*}
  This completes the proof of Theorem~\ref{perturbation-perimeter}.
\end{proof}

\section{Lazutkin parametrization and Deformed Fourier
  Modes} \label{s_DeformedFourierModes}%
It turns out that for nearly glancing orbits, i.e.\ orbits
having small reflection angle, it is more convenient to
study the billiard map $f$, which has been defined
in~\eqref{eq:billiard-map}, in Lazutkin coordinates
(see~\cite{L}), which we now proceed to define.

Let $\Om$ be a strictly convex domain; recall that $s$
denotes the arc-length parametrization of $\partial \Om$ and
denote with $\rho(s)$ its radius of curvature at $s$.
Observe that if $\Om$ is $C^r$, then $\rho$ is $C^{r-2}$.
Define the \emph{Lazutkin parametrization} of the boundary:
\begin{align}
  \label{eq:Lazutkin}%
  x(s) &= C_\Om\,\int_0^s
         \,\rho(\sigma)^{-2/3}\ d\sigma, &\text{ where }C_\Om
  &=\left[\int_0^{\ell_{\partial\Om}}\rho(\sigma)^{-2/3}d\sigma\right]^{-1}.
\end{align}
We call \emph{the Lazutkin map} the following change of
variables:
\begin{align} \label{eq:Lazutkin-map}%
  \lazu&:(s,\varphi)\mapsto (\,x=x(s), y(s,\varphi)=4C_\Om
         \,\rho(s)^{1/3} \sin (\phi/2)\,).
\end{align}
Also let us introduce the \emph{Lazutkin density}
\begin{align}
  \label{correction-function}
  \mu(x)=\frac{1}{2C_\Om\rho(x)^{1/3}},
\end{align}
where we denote by
$\rho(x)=\rho(s(x))$ the radius of curvature in the Lazutkin
parametri-\,zation, where $s(x)$ can be obtained by
inverting~\eqref{eq:Lazutkin}.  Observe that
$\mu(x) =\pi$ for a circle and varies analytically with the
eccentricity for ellipses.

By replacing the arc-length parametrization $s$ with the
Lazutkin parametrization $x$ in the definition of the
tubular coordinates, we obtain the definition of the
\emph{Lazutkin tubular coordinates}.  With a slight abuse of
notation, we denote the corresponding perturbation function
with $\bfn(x)$.  Observe that if $\partial\Om = \cE_e$ is an
ellipse, $\rho$ is analytic and, thus, the Lazutkin
parametrization is itself an analytic parametrization of
$\cE_e$.
\begin{lemma}\label{l_approximationLemma}
  Let $\Om$ be a perturbation of the ellipse $\cE_e$
  identified by the function $\bfn$ (i.e.\
  $\partial\Om = \cE_e+\bfn$).  Consider another ellipse
  $\bar\cE$ sufficiently close to $\cE_e$: let
  $\bfn_{\bar\cE}$ so that $\bar\cE = \cE_e+\bfn_{\bar\cE}$
  and $(\bar x, \bar n)$ denote Lazutkin tubular coordinates
  in a neighborhood of $\bar\cE$.  If $\bar\cE$ is
  sufficiently close to $\cE_e$ we can write
  $\partial\Om = \bar\cE+\bar\bfn$ for some function
  $\bar\bfn(\bar x)$.  There exists $C = C(e)$ so that
  \begin{align}\label{e_pointwiseBound}
    |\bar\bfn(x)-(\bfn(x)-\bfn_{\bar\cE}(x))|\le C\|\bfn_{\bar\cE}\|_{C^{1}}\|\bfn-\bfn_{\bar\cE}\|_{C^1}.
  \end{align}
  In particular, for any $C' > 1$, if $\bar\cE$ is
  sufficiently close to $\cE_e$ we have
  \begin{align}\label{e_uniformBound}
    \frac1{C'}\|\bfn-\bfn_{\bar\cE}\|_{C^1} \le  \|\bar\bfn\|_{C^1} \le C'\|\bfn-\bfn_{\bar\cE}\|_{C^1}.
  \end{align}
\end{lemma}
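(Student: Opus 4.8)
The plan is to follow a single point of $\partial\Om$ through both tubular coordinate systems, keeping careful track of which quantity is evaluated at which parameter. Write $\gamma_e:\T\to\cE_e$ and $\bar\gamma:\T\to\bar\cE$ for the two Lazutkin parametrizations and $N_e,\bar N$ for the corresponding unit normal fields. By definition of the tubular coordinates, for every $x\in\T$ one has $\gamma_e(x)+\bfn(x)N_e(x)\in\partial\Om$ and $\gamma_e(x)+\bfn_{\bar\cE}(x)N_e(x)=\bar\gamma(\tau(x))$, where $\tau:\T\to\T$ is the change of variables relating the $\cE_e$-Lazutkin coordinate of a point of $\bar\cE$ to its $\bar\cE$-Lazutkin coordinate. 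Since $\bar\cE=\cE_e+\bfn_{\bar\cE}$, the curvatures and Lazutkin densities of $\cE_e$ and $\bar\cE$ are $C^0$-close with a difference controlled by some fixed $C^{k_0}$-norm of $\bfn_{\bar\cE}$, so that $\|\tau-\Id\|_{C^2}\le C(e)\|\bfn_{\bar\cE}\|_{C^{k_0}}$. Here, and throughout, I use that $\bar\cE$ ranges over the finite-dimensional analytic family of ellipses near $\cE_e$, on which every $C^k$-norm of $\bfn_{\bar\cE}$ is $\le C_k(e)\|\bfn_{\bar\cE}\|_{C^1}$; this is precisely what lets the final bounds be stated with $\|\bfn_{\bar\cE}\|_{C^1}$ only. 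In particular $\|\tau-\Id\|_{C^2}\le C(e)\|\bfn_{\bar\cE}\|_{C^1}$.

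\textbf{Core projection estimate.} Fix $u\in\T$, let $P\in\partial\Om$ be the point whose $\bar\cE$-Lazutkin coordinate is $u$ (so that $\bar\bfn(u)$ is the signed distance from $\bar\gamma(u)$ to $P$), and let $x=x(P)$ be its $\cE_e$-Lazutkin coordinate, so that $P=\gamma_e(x)+\bfn(x)N_e(x)$. Setting $\delta:=\bfn(x)-\bfn_{\bar\cE}(x)$ and using the two identities above, $P=\bar\gamma(\tau(x))+\delta\,N_e(x)=\bar\gamma(u)+\bar\bfn(u)\bar N(u)$, whence
\[
  \bar\gamma(u)-\bar\gamma(\tau(x))=\delta\,N_e(x)-\bar\bfn(u)\,\bar N(u).
\]
Project this onto $\bar N(u)$ and onto the unit tangent $\bar T(u)$ at $\bar\gamma(u)$. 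The chord $\bar\gamma(u)-\bar\gamma(\tau(x))$ has $\bar N(u)$-component $O(|u-\tau(x)|^2)$ and $\bar T(u)$-component $(1+O(|u-\tau(x)|))\,|\bar\gamma'|\,(u-\tau(x))$; moreover $N_e(x)$ makes an angle $O(\|\bfn_{\bar\cE}\|_{C^1})$ with $\bar N(u)$ (since $\bar\cE$ is the graph of a $C^1$-small function over $\cE_e$, the further correction $\bar N(\tau(x))\to\bar N(u)$ being lower order), so $\langle N_e(x),\bar N(u)\rangle=1+O(\|\bfn_{\bar\cE}\|_{C^1}^2)$ and $\langle N_e(x),\bar T(u)\rangle=O(\|\bfn_{\bar\cE}\|_{C^1})$. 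The tangential projection then gives $|u-\tau(x)|\le C|\delta|\,\|\bfn_{\bar\cE}\|_{C^1}$ (the moment arm $|\delta|$ times the tilt of $N_e$ relative to $\bar\cE$), and, feeding this back into the normal projection, $\bar\bfn(u)=\delta\,\langle N_e(x),\bar N(u)\rangle+O(|u-\tau(x)|^2)=\delta+O\big(|\delta|\,\|\bfn_{\bar\cE}\|_{C^1}^2\big)$.

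\textbf{Conclusion.} To finish I trade the argument $x$ for $u$. Since $|x-u|\le|x-\tau(x)|+|\tau(x)-u|\le\|\tau-\Id\|_{C^0}+C|\delta|\,\|\bfn_{\bar\cE}\|_{C^1}\le C(e)\|\bfn_{\bar\cE}\|_{C^1}$ and $|\delta|\le\|\bfn-\bfn_{\bar\cE}\|_{C^0}$, the triangle inequality yields
\[
  |\bar\bfn(u)-(\bfn(u)-\bfn_{\bar\cE}(u))|\le|\delta-(\bfn(u)-\bfn_{\bar\cE}(u))|+C\|\bfn_{\bar\cE}\|_{C^1}\|\bfn-\bfn_{\bar\cE}\|_{C^1},
\]
and the first term is $\le\|\bfn-\bfn_{\bar\cE}\|_{C^1}\,|x-u|\le C\|\bfn_{\bar\cE}\|_{C^1}\|\bfn-\bfn_{\bar\cE}\|_{C^1}$; this is \eqref{e_pointwiseBound}. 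Bound \eqref{e_uniformBound} comes out by running the same computation with one extra derivative: differentiating the above vector identity and using $\|\tau-\Id\|_{C^2}\le C(e)\|\bfn_{\bar\cE}\|_{C^1}$ gives $\|\bar\bfn-(\bfn-\bfn_{\bar\cE})\circ\sigma\|_{C^1}\le C\|\bfn_{\bar\cE}\|_{C^1}\|\bfn-\bfn_{\bar\cE}\|_{C^1}$ for a reparametrization $\sigma$ with $\|\sigma-\Id\|_{C^1}\le C(e)\|\bfn_{\bar\cE}\|_{C^1}$, so $\|\bar\bfn\|_{C^1}=(1+O(\|\bfn_{\bar\cE}\|_{C^1}))\|\bfn-\bfn_{\bar\cE}\|_{C^1}$, which gives both inequalities of \eqref{e_uniformBound} once $\bar\cE$ is close enough to $\cE_e$ that the $O(\cdot)$ is below $C'-1$.

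\textbf{Main obstacle.} The difficulty is one of bookkeeping rather than of any single hard step: the two Lazutkin parametrizations of $\bar\cE$ genuinely disagree and the normals $N_e,\bar N$ genuinely differ, so one must verify that every error term that is produced splits off one factor of order $\|\bfn_{\bar\cE}\|_{C^1}$ and another of order $\|\bfn-\bfn_{\bar\cE}\|_{C^1}$. The delicate point is the a priori dangerous $O(|u-\tau(x)|^2)$ coming from the curvature of $\bar\cE$ in the normal projection: it is harmless only because $u-\tau(x)$ is itself of order $|\delta|\,\|\bfn_{\bar\cE}\|_{C^1}$, which one extracts from the tangential projection. The one non-elementary ingredient is the finite-dimensionality of the ellipse family, invoked to absorb the several derivatives the Lazutkin parametrization costs into the single $C^1$-norm appearing in the statement.
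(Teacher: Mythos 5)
Your proof is correct, and the route you take is genuinely different from the paper's. The paper writes the change of tubular coordinates $(x,n)\mapsto(\bar x,\bar n)$ explicitly in the form $\bar x = x+\varrho_1(x,n)$, $\bar n = (n-\bfn_{\bar\cE}(x))(1+\varrho_2(x,n))$ with analytic remainders $\varrho_i$ of size $O(\|\bfn_{\bar\cE}\|_{C^0})$, applies the implicit function theorem to compute $\bar\bfn'(\bar x_\Om(x))$ as a quotient, and then integrates from a conveniently chosen critical point of $\bfn_{\bar\cE}$; \eqref{e_uniformBound} drops out directly from the formula for $\bar\bfn'$. You instead write the vector identity $\bar\gamma(u)-\bar\gamma(\tau(x))=\delta\,N_e(x)-\bar\bfn(u)\bar N(u)$ for a single point of $\partial\Om$ expressed in both coordinate systems and split it along $\bar T(u)$ and $\bar N(u)$: the tangential projection bootstraps a bound $|u-\tau(x)|\lesssim|\delta|\,\|\bfn_{\bar\cE}\|_{C^1}$, which then renders the quadratic error $O(|u-\tau(x)|^2)$ in the normal projection harmless. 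This gives an honest pointwise bound without integrating. The argument trading $x$ for $u$ at the end is necessary (the lemma compares the two perturbations at the \emph{same} Lazutkin argument), and you handle it correctly via a mean-value estimate on $\bfn-\bfn_{\bar\cE}$.

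One caveat worth flagging: \eqref{e_uniformBound} is a $C^1$ statement and does not follow from the pointwise \eqref{e_pointwiseBound}; you correctly notice this and sketch a differentiated version of the projection argument, but this part is more compressed than the rest. In the paper the $C^1$ control is free because the argument proceeds by estimating $\bar\bfn'$ first and only then integrating; in your scheme one really must redo the projections after differentiating the vector identity in $x$, project onto $\bar N(u)$, and use $\langle\bar\gamma'(\tau(x)),\bar N(u)\rangle = O(|u-\tau(x)|) = O(|\delta|\,\|\bfn_{\bar\cE}\|_{C^1})$ (together with $\langle N_e'(x),\bar N(u)\rangle=O(\|\bfn_{\bar\cE}\|_{C^1})$) to isolate $\bar\bfn'(u)u'(x)=(\bfn'(x)-\bfn_{\bar\cE}'(x))+O(\|\bfn_{\bar\cE}\|_{C^1}\|\bfn-\bfn_{\bar\cE}\|_{C^1})$. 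Spelled out, this closes the gap; it is not a structural problem but it is the one place where the paper's bookkeeping is genuinely cleaner. Both proofs invoke the same non-elementary ingredient, namely the analyticity and finite dimensionality of the ellipse family to control all $C^k$-norms of $\bfn_{\bar\cE}$ by its $C^1$ (or even $C^0$) norm, so that the final bound can be phrased with $\|\bfn_{\bar\cE}\|_{C^1}$ only.
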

\begin{proof}
  Consider the change of variables
  $(x,n)\mapsto (\bar x,\bar n)$ defined in the intersection
  of the tubular neighborhoods of $\cE_e$ and $\bar\cE$.
  Clearly this is an analytic change of variables, that is
  $C\|\bfn_{\bar\cE}\|_{C^0}$-close to the identity in any
  $C^r$-norm for some $C$ depending on $r$ and on the
  eccentricity $e$.  In particular, we have:
  \begin{align*}
    \bar x(x,n) &= x+ \varrho_1(x,n),\\
    \bar n(x,n) &= (n-\bfn_{\bar\cE}(x))(1+
                       \varrho_2(x,n)),
  \end{align*}
  where $\varrho_1$ and $\varrho_2$ are analytic functions
  that are $C\|\bfn_{\bar\cE}\|_{C^0}$-small in any
  $C^r$-norm for some $C$ depending on $r$ and on the
  eccentricity $e$.  Observe that if $x_\crit$ is a critical
  point of $\bfn_{\bar\cE}$, we have by construction
  $\bar n(x_\crit,n) = n-\bfn_{\bar\cE}(x_\crit)$.  Since
  $\partial\Om = \cE_e+\bfn = \bar\cE+\bar\bfn$, we conclude
  that
  \begin{align*}
    \bar n(x,\bfn(x)) = \bar\bfn(\bar x(x,\bfn(x))).
  \end{align*}
  Let us denote with $\bar x_\Omega(x) = \bar x(x,\bfn(x))$;
  observe that by our previous estimates we have that $\bar
  x_\Om$ is a diffeomorphism and $\bar x_\Om' = 1+O(\|\bfn_{\bar\cE}\|_{C^1}\|\bfn\|_{C^1})$.
  By the implicit function theorem we conclude that
  \begin{align*}
    \bar\bfn'(\bar x_{\Om}(x))) =%
    \frac{\partial_x \bar n(x,\bfn(x)) + \partial_{n}\bar n(x,{\bfn}(x)) \bfn'(x)}{\partial_{x} \bar x(x,{\bfn}(x))
    + \partial_{n}\bar x(x,{\bfn}(x)) \bfn'(x)}.
   \end{align*}
  Using the above expression for $n(\bar x,\bar n)$ and
  $x(\bar x,\bar n)$ we gather
  \begin{align*}
    \bar\bfn'(\bar x_\Om(x))) = \frac{
(\bfn'(x)-\bfn'_{\bar\cE}(x))
(1+O(\|\bfn_{\bar\cE}\|_{C^0}))}
{1+O(\|\bfn_{\bar\cE}\|_{C^0})}.
  \end{align*}
  Thus, integrating:
  \begin{align*}
    \bar\bfn(\bar x) &=
    \bar\bfn(x_\crit)+\int_{x_\crit}^{\bar x}\bar\bfn'(\bar
                       x)d\bar x\\
    &= \left[\bfn(x_\Om\inv(\bar
      x))-\bfn_{\bar\cE}(x_\Om\inv(\bar
      x))\right](1+O(\|\bfn_{\bar\cE}\|_{C^0}))\\
      & =\bfn(\bar x)-\bfn_{\bar\cE}(\bar x)+O(\|\bfn_{\bar\cE}\|_{C^1}\|\bfn-\bfn_{\bar\cE}\|_{C^1}),
  \end{align*}
that is~\eqref{e_pointwiseBound}.  It is then immediate to obtain~\eqref{e_uniformBound}.
\end{proof}

Consider now the billiard map in Lazutkin coordinates
$\fL = \lazu\circ f\circ\lazu^{-1}$; then $\fL$ has the
following form (see e.g.~\cite[(1.4)]{L}):
\begin{align}
  \label{Lazutkin-map}
  \fL : (x,y)&\to
               (x+y+y^3g(x,y),y+y^4h(x,y)),
\end{align}
where $g$ and $h$ can be expressed analytically in terms of
derivatives of the curvature radius $\rho$ up to order $3$:
hence, if $\Om$ is $C^r$, $g,h$ are $C^{r-5}$.  Recall that
$\widehat \Gamma_{1/q} \subset \Om$ denotes a caustic of
rotation number $1/q$, while $\Gamma_{1/q}$ denotes the
associated non-contractible invariant curve for the billiard
map $f$.  We denote by $\GmL_{1/q}$ the corresponding
invariant curve for the billiard map $\fL$ in Lazutkin
coordinates, i.e. $\GmL_{1/q} = \lazu\, \Gm_{1/q}$.
Moreover, let us introduce the change of variables from
action-angle coordinates $(\tet,\om)$ to Lazutkin
coordinates, i.e
$(X(\tet,\om),Y(\tet,\om))
=\lazu(S(\tet,\om),\phif(\tet,\om))$;
as before, we define $X_q(\tet) = X(\tet,1/q)$ and
$Y_q(\tet) = Y(\tet,1/q)$.
\begin{lemma}\label{lazutkin-coordinates-orbits}
  Let $\Om$ be a $C^5$ strictly convex domain; for
  $k\in \Z$, let $(x_k,y_k) = \fL^k(x_0,y_0)$ be a periodic
  orbit of rotation number $1/q$ with $q>2$.  Then there
  exists $C$ depending on $\|\rho\|_{C^3}$ and {independent
    of $q$}, such that for $0\le k < q$
  \begin{align}\label{e_estimateOnLazutkin-orbits}
    \left|y_k- \frac 1q \right|&< \frac {C}{q^3},&
   \left|\tilde
      x_k-\tilde x_0-\frac kq \right|&<\frac {C}{q^2},
  \end{align}
  where $\tilde x_k$ is a lift of $x_k$ to $\R$.
\end{lemma}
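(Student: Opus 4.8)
The plan is to exploit the normal form~\eqref{Lazutkin-map} of the billiard map in Lazutkin coordinates directly, together with the fact that $\GmL_{1/q}$ is a non-contractible invariant circle which, by Birkhoff's theorem, is a Lipschitz graph $y = \gamma_q(x)$. First I would record the two obvious a priori bounds that come for free from the graph structure and the rotation number: since the orbit $(x_k,y_k)$ has rotation number $1/q$, the total $x$-displacement over one period is $\tilde x_q - \tilde x_0 = 1$ (working in the covering space $\R$, recalling $x\in\R/\Z$), and from~\eqref{Lazutkin-map} the one-step displacement is $x_{k+1}-x_k = y_k + y_k^3 g(x_k,y_k)$. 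Summing over a period and using that $g$ is bounded (here is where $C^5$ regularity of $\Om$, hence $C^0$ control on $g$, enters) gives $\sum_{k=0}^{q-1} y_k = 1 + O(\sum y_k^3)$. The graph being Lipschitz with a constant controlled by $\|\rho\|_{C^3}$, and the invariant curve being $O(1/q)$-close to the glancing set $\{y=0\}$ — more precisely $\max_k y_k \le C/q$, which follows because consecutive $x$-increments are each at most $\max y_k + O(\max y_k^3)$ and there are $q$ of them filling up total displacement $1$ — one gets $\max_k y_k = O(1/q)$ and $\min_k y_k \ge c/q$ for constants depending only on $\|\rho\|_{C^3}$.

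The core of the argument is then to upgrade "$y_k = O(1/q)$ for all $k$" to "$y_k = 1/q + O(1/q^3)$". For this I would look at the variation of $y$ along the orbit: from the second component of~\eqref{Lazutkin-map}, $y_{k+1} - y_k = y_k^4 h(x_k,y_k)$, so $|y_{k+1}-y_k| \le \|h\|_{C^0}\, y_k^4 = O(1/q^4)$. Iterating at most $q$ times, $\max_k y_k - \min_k y_k = O(1/q^3)$, i.e. all the $y_k$ agree with their common average to within $O(1/q^3)$. Combined with the period relation $\sum_{k=0}^{q-1} y_k = 1 + O(q\cdot(1/q)^3) = 1 + O(1/q^2)$ from the previous paragraph, the average of the $y_k$ is $1/q + O(1/q^3)$, and hence each individual $y_k = 1/q + O(1/q^3)$. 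This gives the first inequality in~\eqref{e_estimateOnLazutkin-orbits}.

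For the second inequality, I would simply telescope the first component: $\tilde x_k - \tilde x_0 = \sum_{j=0}^{k-1}(x_{j+1}-x_j) = \sum_{j=0}^{k-1}\big(y_j + y_j^3 g(x_j,y_j)\big)$. Substituting $y_j = 1/q + O(1/q^3)$ and $y_j^3 g = O(1/q^3)$, the sum of $k \le q$ terms gives $\tilde x_k - \tilde x_0 = k/q + O(k/q^3) + O(k/q^3) = k/q + O(1/q^2)$, as required. The main obstacle — and the only place any care is needed — is justifying the uniform-in-$q$ a priori bound $\max_k y_k \le C/q$ with $C$ depending only on $\|\rho\|_{C^3}$: this requires knowing that the invariant curve $\GmL_{1/q}$ is genuinely close to the boundary of phase space and does not "bulge," which is exactly the Lipschitz-graph property of Birkhoff invariant curves together with the observation that a graph of rotation number $1/q$ must have $x$-displacement averaging $1/q$ per step; once one has that the Lipschitz constant is controlled (this is where regularity beyond $C^5$ would in principle be convenient, though $C^5$ suffices since $g,h$ are then merely $C^0$, which is all we use), everything else is the bookkeeping sketched above. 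The remaining steps are routine telescoping estimates using only the $C^0$-bounds on $g$ and $h$.
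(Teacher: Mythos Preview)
Your overall strategy---use the Lazutkin form to get $|y_{k+1}-y_k|=O(y_k^4)$, hence $\max_k y_k - \min_k y_k = O(1/q^3)$, then pin down the common value by averaging---is correct and is essentially what the paper does (the paper phrases the last step as a contradiction argument rather than an averaging argument, but the two are equivalent). The telescoping for the $x$-estimate is also fine. The gap is precisely where you flag it: the a priori bound $\max_k y_k \le C/q$.

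There are two problems with your justification of that bound. First, the parenthetical argument is backwards: from ``each increment is at most $\max y_k + O(\max y_k^3)$ and $q$ of them sum to $1$'' you obtain $q\bigl(\max y_k + O(\max y_k^3)\bigr) \ge 1$, which is a \emph{lower} bound on $\max y_k$, not an upper bound. The correct pigeonhole gives one index $k_*$ with increment $\le 1/q$, i.e.\ $y_{k_*}(1 + y_{k_*}^2 g) \le 1/q$; but extracting $y_{k_*} \le C/q$ from this already requires knowing $|y_{k_*}^2 g| < 1/2$, which is circular unless you have independent smallness of $y_{k_*}$. Second, the appeal to Birkhoff's theorem is unavailable: the hypothesis of the lemma is only that $(x_k,y_k)$ is a periodic orbit of rotation number $1/q$, not that it lies on an invariant circle $\GmL_{1/q}$, so there is no Lipschitz graph to invoke---and even if there were, a Lipschitz bound on $\gamma_q$ does not by itself yield $\sup\gamma_q = O(1/q)$.

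The paper closes this gap by running the pigeonhole step in \emph{arc-length} rather than Lazutkin coordinates. Since $\tilde s_q - \tilde s_0 = 1$, some $k_*$ has $\tilde s_{k_*+1} - \tilde s_{k_*} \le 1/q$; for a strictly convex boundary a short chord is nearly tangent, so the geometric expansion of the billiard map (not the Lazutkin normal form) gives $\varphi_{k_*} \le C/q$ directly, whence $y_{k_*} \le C_1/q$. From this single anchor one bootstraps via $|y_{k+1}-y_k| \le \|h\|\, y_k^4$ and finite induction over at most $q$ steps (requiring $q \ge q_0(\|\rho\|_{C^3})$; for smaller $q$ the statement holds trivially by enlarging $C$) to obtain $y_k \le C_1/q$ for all $k$. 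After that, your averaging argument goes through verbatim.
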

\begin{corollary}\label{lazutkin-coordinates-curves}
  Let $\Om$ be a $C^5$ strictly convex domain and let
  $\GmL_{1/q}$ be the invariant curve corresponding to an
  integrable rational caustic of rotation number $1/q$ with
  $q>2$, given by
  \begin{align*}
    \GmL_{1/q}=\{ (x,y_q(x)): \ x\in \T\}.
  \end{align*}
  Then there exists $C$ depending on $\|\rho\|_{C^3}$, such
  that
  \begin{align}\label{e_estimateOnLazutkiny-curves}
    \left|y_q(x)- \frac 1q \right|&< \frac {C}{q^3} &\text{ for any $\ x\in\T$.}
  \end{align}
  Moreover, in the case $\partial\Om$ is an ellipse $\cE_e$ of
  eccentricity $e$ and perimeter $1$, the constant $C$
  {can be chosen to depend continuously} on $e$
  and satisfies $C(e)\to0$ as $e\to0$.
\end{corollary}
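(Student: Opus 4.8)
The plan is to deduce the corollary directly from Lemma~\ref{lazutkin-coordinates-orbits} by observing that, since $\GmL_{1/q}$ is the invariant curve of an integrable rational caustic of rotation number $1/q$, it consists entirely of $q$-periodic points of $\fL$. First I would fix an arbitrary $x\in\T$ and consider the periodic orbit through the point $(x,y_q(x))\in\GmL_{1/q}$; applying the estimate $|y_k-1/q|<C/q^3$ from~\eqref{e_estimateOnLazutkin-orbits} with $k=0$ (so that $(x_0,y_0)=(x,y_q(x))$) gives exactly $|y_q(x)-1/q|<C/q^3$, and since $x$ was arbitrary this is~\eqref{e_estimateOnLazutkiny-curves}. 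The constant $C$ here is the one from Lemma~\ref{lazutkin-coordinates-orbits}, which depends only on $\|\rho\|_{C^3}$ and not on $q$, so nothing further is needed for the first part of the statement.

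For the second part, when $\partial\Om=\cE_e$, I would track how the constant $C$ in Lemma~\ref{lazutkin-coordinates-orbits} depends on the eccentricity $e$. The radius of curvature $\rho$ of $\cE_e$ is an analytic function of arclength that depends analytically on $e$, with $\rho\equiv 1/(2\pi)$ (constant) when $e=0$; hence $\|\rho\|_{C^3}$ and, more importantly, the quantities $\|\partial_x^j\rho\|_{C^0}$ for $j=1,2,3$ (which control the coefficient functions $g,h$ in~\eqref{Lazutkin-map}) depend continuously on $e$. The point is that for $e=0$ the Lazutkin normal form~\eqref{Lazutkin-map} has $g\equiv h\equiv 0$ (the circular billiard in Lazutkin coordinates is an exact rigid rotation on each invariant curve), so the error terms driving the $O(1/q^3)$ and $O(1/q^2)$ deviations in~\eqref{e_estimateOnLazutkin-orbits} carry a prefactor that is controlled by $\max(\|g\|,\|h\|)$ over the relevant region, which tends to $0$ as $e\to0$. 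Therefore the constant $C(e)$ produced by the argument of Lemma~\ref{lazutkin-coordinates-orbits} can be chosen continuous in $e$ with $C(e)\to 0$ as $e\to0$, and the same $C(e)$ serves in~\eqref{e_estimateOnLazutkiny-curves}.

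The one step requiring a little care — and the main (minor) obstacle — is making the dependence $C(e)\to 0$ precise: one must re-examine the proof of Lemma~\ref{lazutkin-coordinates-orbits} to confirm that the constant there is genuinely of the form (universal constant)~$\times$~(a norm of $g,h$ that vanishes at $e=0$), rather than merely bounded in terms of $\|\rho\|_{C^3}$; equivalently, one checks that in the normal form~\eqref{Lazutkin-map} the deviation of a rotation-number-$1/q$ periodic orbit from the exact rotation orbit $x_k=x_0+k/q$, $y_k=1/q$ is linear in the size of the perturbing terms $y^3g$ and $y^4h$ to leading order. Since $\fL$ for $\cE_e$ deforms analytically from the rigid rotation as $e\to0$, standard perturbative (implicit-function) estimates give this linear dependence, and the continuity of $C(e)$ in $e$ follows from the analytic dependence of $\rho$, hence of $g,h$, on $e$. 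This completes the proof.
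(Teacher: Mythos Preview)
Your argument for the first part is correct and matches the paper's: since every point of $\GmL_{1/q}$ lies on a $q$-periodic orbit, the first bound of~\eqref{e_estimateOnLazutkin-orbits} applies directly at each $x$.

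For the second part your route differs from the paper's and contains a genuine error. You assert that for the circle ($e=0$) the Lazutkin normal form~\eqref{Lazutkin-map} has $g\equiv h\equiv 0$, inferring this from the fact that the circular billiard is a rigid rotation on each invariant curve. That fact yields only $h\equiv 0$ (the $y$-coordinate is preserved); it does \emph{not} force $g\equiv 0$, because the rotation number may still depend nonlinearly on $y$. Indeed, for the circle of perimeter $1$ one computes $x'-x=(2/\pi)\arcsin(\pi y/2)=y+\tfrac{\pi^2}{24}\,y^3+O(y^5)$, so $g\to\pi^2/24$ as $y\to 0$. Consequently the constant $C_0=\max\{\|g\|,\|h\|\}C_1^4$ appearing in the proof of Lemma~\ref{lazutkin-coordinates-orbits} does \emph{not} tend to $0$ as $e\to 0$, and the ``re-examination'' you correctly flag as the delicate step actually fails: tracking constants through that proof cannot produce $C(e)\to 0$.

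The paper takes a different route for this part: instead of revisiting the lemma's proof, it uses that for an ellipse the invariant curve is given parametrically as $(X_q(\tet),Y_q(\tet))$, where the action-angle change of variables $(X_q,Y_q)$ is analytic in both $\tet$ and the eccentricity $e$. The constant is then read off directly from the analytic family $Y_q(\cdot\,;e)$ and its explicit value at $e=0$, bypassing the normal-form coefficients $g,h$ altogether.
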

\begin{proof}
  The proof of the first part immediately follows from
  the first bound of~\eqref{e_estimateOnLazutkin-orbits}.
  Observe now that if $\partial\Om$ is an ellipse of
  eccentricity $e$,
  $\GmL_{1/q} =\{(X_q(\tet),Y_q(\tet))\}_{\tet\in\T}$ where
  both $X_q$ and $Y_q$ vary analytically with $e$.
  Moreover, if $\partial\Om$ is a circle, $Y_q(\tet)$ is the
  constant function equal to $1/q$.  We conclude that we can
  choose $C(e)$ so that {it is continuous in $e$ and} $\lim_{e\to0}C(e) = 0$.
\end{proof}
\begin{corollary}
  \label{edge-uniform}
  Let $\Om$ be a $C^5$ strictly convex domain and
  $q > 2$.  Let $(s_k,\varphi_k),\ k=0,\cdots,q-1$ be a
  $q$-periodic orbit of rotation number $1/q$ and
  $P_k, \ k=0,\cdots,q-1$ be the corresponding collision
  points on $\partial \Om$. Then there is $\cb=\cb(\Om)>1$,
  depending on $\|\rho\|_{C^3}$, such that the Euclidean
  length of each edge $\|P_{k+1} -P_{k}\|$ satisfies
  \begin{align*}
    \frac{1}{\cb q} \le \|P_{k+1} -P_{k}\| \le \frac{\cb}{q}.
  \end{align*}
  Moreover, if $\Om$ is a perturbation $\bfn$ of an ellipse
  $\cE_e$ (i.e. $\partial\Om = \cE_e+\bfn$), then $\cb$
  {can be chosen to} depend continuously on the
  eccentricity $e$ and $\|\bfn\|_{C^5}$.
\end{corollary}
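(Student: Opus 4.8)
The plan is to reduce the statement to the estimates on invariant curves in Lazutkin coordinates established in Corollary~\ref{lazutkin-coordinates-curves}, and then translate the $y$-coordinate bound back into a bound on the Euclidean edge length via the explicit form of the Lazutkin change of variables. First I would recall from~\eqref{eq:Lazutkin-map} that $y(s,\varphi) = 4C_\Om\,\rho(s)^{1/3}\sin(\phi/2)$, so that for a periodic orbit of rotation number $1/q$ the point $(s_k,\varphi_k)$ is carried by $\lazu$ to a point $(x_k,y_k)$ lying on the invariant curve $\GmL_{1/q}$. By Corollary~\ref{lazutkin-coordinates-curves} we have $|y_k - 1/q| < C/q^3$, hence $y_k \asymp 1/q$ (uniformly in $k$ and $q$, with the implied constant depending only on $\|\rho\|_{C^3}$, since $q>2$). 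Inverting the relation $y_k = 4C_\Om\rho(s_k)^{1/3}\sin(\phi_k/2)$ and using that $\rho$ is bounded above and below on the strictly convex curve $\partial\Om$, we obtain $\sin(\phi_k/2) \asymp 1/q$, and therefore $\phi_k \asymp 1/q$ as well (both bounded above and below by constant multiples of $1/q$).

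Next I would estimate the Euclidean edge length $\|P_{k+1}-P_k\|$ in terms of $\phi_k$ and $\phi_{k+1}$. The chord $P_{k+1}-P_k$ subtends reflection angles $\phi_k$ at $P_k$ and $\phi_{k+1}$ at $P_{k+1}$ with the boundary; for a strictly convex $C^5$ curve with curvature bounded between $\rho_{\min}>0$ and $\rho_{\max}$, elementary geometry (comparing with the osculating circle, or simply using that the boundary is locally a small perturbation of a straight segment at this scale) gives $\|P_{k+1}-P_k\| \asymp \rho(s_k)\,\sin\phi_k \asymp \phi_k$, with implied constants depending only on $\rho_{\min},\rho_{\max}$, hence on $\|\rho\|_{C^3}$. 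Combining with $\phi_k\asymp 1/q$ yields $\|P_{k+1}-P_k\| \asymp 1/q$, which is exactly the claimed two-sided bound with some $\cb = \cb(\Om) > 1$ depending on $\|\rho\|_{C^3}$.

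Finally, for the last sentence of the statement: when $\partial\Om = \cE_e + \bfn$, the radius of curvature $\rho$ (and hence $\|\rho\|_{C^3}$, the bounds $\rho_{\min},\rho_{\max}$, and the normalization constant $C_\Om$) depends continuously on the eccentricity $e$ and on $\|\bfn\|_{C^5}$ — note that $\rho$ involves two derivatives of the boundary, so $C^5$ control of the perturbation suffices for $C^3$ control of $\rho$. Since all the constants appearing in the chain of estimates above were extracted from $\|\rho\|_{C^3}$ and from the uniform curvature bounds, one concludes that $\cb$ can be chosen to depend continuously on $e$ and $\|\bfn\|_{C^5}$. The main obstacle I anticipate is not conceptual but bookkeeping: one must be careful that every implied constant is genuinely independent of $q$ (using $q>2$ to absorb the $1/q^3$ correction term into the leading $1/q$), and that the elementary chord-length estimate is stated with explicit dependence only on the curvature bounds rather than on the particular orbit.
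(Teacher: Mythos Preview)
Your approach is essentially identical to the paper's, with one correctable slip: you invoke Corollary~\ref{lazutkin-coordinates-curves}, which requires the existence of an \emph{integrable rational caustic} of rotation number $1/q$, but the hypothesis of Corollary~\ref{edge-uniform} makes no such assumption --- it concerns an arbitrary $q$-periodic orbit of rotation number $1/q$, and there is no invariant curve $\GmL_{1/q}$ on which the orbit is guaranteed to sit. The correct reference is Lemma~\ref{lazutkin-coordinates-orbits}, which gives the same bound $|y_k - 1/q| < C/q^3$ directly for any periodic orbit of rotation number $1/q$; this is exactly what the paper cites. Once that is swapped in, your argument matches the paper's proof line by line: bound $y_k$, invert~\eqref{eq:Lazutkin-map} using the uniform curvature bounds to get $\sin(\phi_k/2) \asymp 1/q$, and then use bounded curvature again to conclude $\|P_{k+1}-P_k\| \asymp 1/q$.
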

\begin{proof}
  Recall that, by definition,
  $y(s,\phi)=4 \,C_\Om \,\rho^{1/3}(s) \sin (\phi/2)$.  By
  Lemma \ref{lazutkin-coordinates-orbits} we have
  $y \in [1/q-C/q^3,1/q+C/q^3]$ for some $C$ depending on
  $\rho$ only. Therefore,
  $\sin (\phi/2)\in [1/Cq-1/q^3,C/q+C^2/q^3]$.  Since the
  angle of reflection is of order $1/q$ and curvature is
  uniformly bounded, we get the required bound on the
  distance $\|P_{k+1}-P_{k}\|$.
\end{proof}
\begin{proof}[{Proof of Lemma~\ref{lazutkin-coordinates-orbits}}]
  Choose $q_0$ (sufficiently large depending on
  $\|\rho\|_{C^3}$) to be specified in due course and assume
  $q\ge q_0$.  {Observe that we can choose $C$ so
    large that our statement trivially holds for any
    $2 < q < q_0$.}  First of all, we claim that we have the
  preliminary bound
  \begin{align*}
    y_k&\le \frac{C_1}{q},&\text{ for } 0\le k <q,
  \end{align*}
  where $C_1$ is a large constant depending on the curvature
  $\rho$.  In fact,
  let $(s_k,\varphi_k) = \lazu^{-1}(x_k,y_k)$, so
  that
  \begin{align*}
  (s_{k+1},\varphi_{k+1}) = f(s_k,\varphi_k)
  \end{align*}
  and let $\tilde s_k$ be a lift to $\R$.  Since
  $\tilde s_q = \tilde s_0 + 1$, there exists
  $0\le \ksp < q$ so that
  $0<\tilde s_{\ksp+1}-\tilde s_\ksp\le 1/q$.  For fixed
  $s_k$, we can find a function $\varphi(s_{k+1})$ so that
  the ray leaving $s_k$ with angle $\varphi(s_{k+1})$ will
  collide with $\partial\Om$ at $s_{k+1}$; if $q_0$ is
  sufficiently large, we can use expansion of the billiard
  map for small $\varphi$ in terms of curvature (see
  e.g. \cite[(1.1)]{L}) and conclude that
  $\varphi_\ksp < C/q$, where $C = C(\|\rho\|_{C^1})$ and
  thus, by definition of the Lazutkin coordinate
  map~\eqref{eq:Lazutkin-map} we conclude that
  $y_\ksp\le {C_1}/{q}$, where
  $C_1 = C_1(\|\rho\|_{C^1})$.  By
  iterating~\eqref{Lazutkin-map}, starting from $\ksp$, we
  conclude by (finite) induction that for any $0\le k < q$:
  \begin{align*}
    |y_{k+1}-y_k|&\le \dfrac{C_0}{q^4}, & y_k &< \dfrac{C_1}q,
  \end{align*}
  where $C_0=\max \{\|g\|,\|h\|\}C_1^4$ and we have possibly
  chosen a larger $C_1$.  Observe that since $\|g\|$ and
  $\|h\|$ depend on $\|\rho\|_{C^3}$, so does $C_0$.
  Moreover, by iterating the first inequality $q$ times we
  also have
  \begin{align}\label{e_closeyq}
    |y_j-y_k|&\le \dfrac{C_0}{q^3}&\text{ for any } 0\le k,j <q.
  \end{align}
  We now claim that $|y_k-1/q| \le 4C_0/q^3$ for any
  $0\le k < q$.  Assume by contradiction that for some $j$,
  $y_{j}-1/q > 4C_0/q^3$.  Then by~\eqref{e_closeyq} we
  gather that $y_k-1/q > 3C_0/q^3$ for any $0\le k < q$.
  Hence, by~\eqref{Lazutkin-map} and the above estimates,
  for any $0\le k < q$ we have, assuming $q_0$ is
  sufficiently large:
  \begin{align*}
    \tilde x_{k+1}-\tilde x_k&\ge \frac 1q + \frac{C_0}{q^3}.
  \end{align*}
  Iterating $q$ times, we conclude that
  \begin{align*}
  \tilde x_q -\tilde x_0\ge 1+\frac{C_0}{q^2},
  \end{align*}
  which is a contradiction, since
  $\tilde x_q = \tilde x_0 +1.$ A similar argument implies
  that if there exists $0 \le j < q$ so that
  \begin{align*}
    y_j-\frac 1q < -\frac{4C_0}{q^3}
  \end{align*}
  we also reach a contradiction.  This implies our claim,
  which in turn
  implies~\eqref{e_estimateOnLazutkin-orbits}.  Notice that
  in order to have $C_0/q^3$ to be small compared to $1/q$
  we need $q_0$ (and thus $q$) to be sufficiently large
  (with respect to $\|\rho\|_{C^3}$).
\end{proof}
\begin{lemma}
  \label{action-angle-variation} Let $\cE_e$ be an ellipse of
  eccentricity $e$ and perimeter $1$; then there exists
  $C(e)$ with $C(e)\to 0$ as $e\to 0$ so that
  \begin{align*}%
    \|X_q-\Id\|_{C^1}\le\dfrac{C(e)}{q^2}.
  \end{align*}
\end{lemma}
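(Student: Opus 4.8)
The plan is to bound $\|X_q - \Id\|_{C^1}$ by comparing the action-angle parametrization at rotation number $1/q$ with the corresponding object for the circle, exploiting the analyticity of $X(\tet,\om)$ in all of $\tet$, $\om$, and the eccentricity $e$. First I would recall that $X(\tet,\om) = x(S(\tet,\om))$ where $x$ is the (analytic, since $\cE_e$ is an ellipse) Lazutkin parametrization and $S$ is the action-angle-to-arc-length change of variables of Tabanov, also analytic. For $e = 0$ the billiard is a circle, every convex caustic is itself a circle, and one has $S(\tet,\om) = \tet$, $\mu \equiv \pi$, hence $X(\tet,\om) \equiv \tet$ identically in $\tet$ and $\om$; in particular $X_q = \Id$ when $e = 0$. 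So the whole estimate is really a statement that the deviation $X_q - \Id$ is controlled by a quantity that vanishes with $e$ \emph{and} decays like $q^{-2}$.

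The key step is to extract the $q^{-2}$ decay. I would use Corollary~\ref{lazutkin-coordinates-curves}: the invariant curve $\GmL_{1/q} = \{(X_q(\tet), Y_q(\tet))\}$ satisfies $Y_q = 1/q + O(q^{-3})$ uniformly, with the implied constant continuous in $e$ and vanishing as $e \to 0$. Since $\fL$ acts on this curve in the Lazutkin form~\eqref{Lazutkin-map}, namely $x \mapsto x + y + y^3 g(x,y)$, and since $X_q$ conjugates this action to the rigid rotation $\tet \mapsto \tet + 1/q$ (this is exactly~\eqref{e_dynamicalS} transported to Lazutkin coordinates), the conjugacy equation reads
\begin{align*}
  X_q(\tet + 1/q) - X_q(\tet) = Y_q(\tet) + Y_q(\tet)^3 g(X_q(\tet), Y_q(\tet)).
\end{align*}
Writing $X_q = \Id + u_q$, the left side is $1/q + u_q(\tet+1/q) - u_q(\tet)$, while the right side is $1/q + O(q^{-3})$ with an $e$-small constant (using $Y_q = 1/q + O(q^{-3})$ and boundedness of $g$, which depends analytically on $\rho$ and hence continuously on $e$, vanishing contribution absorbed into the $e \to 0$ analysis via the circle being a fixed point of the construction). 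Hence $u_q(\tet + 1/q) - u_q(\tet) = O(q^{-3})$ with an $e$-small constant. This is a cohomological-type equation: since $1/q$ has exact period $q$ on $\T$, summing the telescoping differences around the circle and using that $\int_\T u_q = 0$ after the normalization $X_q(0) = 0$ (or rather using the normalization directly), one recovers $\|u_q\|_{C^0} \le C(e)/q^2$: the $q$ summands each of size $C(e)/q^3$ give, after the standard gain from averaging a discrete coboundary, the bound $C(e) q \cdot q^{-3} = C(e) q^{-2}$.

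For the $C^1$ bound I would differentiate the conjugacy equation in $\tet$: with $w_q = u_q' = X_q' - 1$, differentiating gives $w_q(\tet+1/q) - w_q(\tet) = $ (derivative of the right-hand side), and the right-hand side derivative is $Y_q'(\tet)(1 + 3Y_q^2 g + \dots) + Y_q^3 \partial_x g \cdot X_q'$. Here I need a bound on $Y_q'$; this follows either by differentiating the second component of~\eqref{Lazutkin-map} along the invariant curve (giving $Y_q' = O(q^{-3})$ with $e$-small constant, using $h$ bounded and $y \sim 1/q$), or directly from Corollary~\ref{lazutkin-coordinates-curves} applied to $\partial\Om$ an ellipse together with analyticity in $e$ (when $e = 0$, $Y_q$ is constant, so $Y_q' = O(e/q^3)$). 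Feeding this back, the right-hand side of the differentiated conjugacy equation is $O(C(e)/q^3)$, and the same discrete-coboundary averaging argument on $w_q$ (now using $w_q(0)$ controlled, or $\int_\T w_q = 0$) yields $\|w_q\|_{C^0} \le C(e)/q^2$. Combining, $\|X_q - \Id\|_{C^1} \le C(e)/q^2$ with $C(e) \to 0$ as $e \to 0$, the continuity and vanishing of $C(e)$ being inherited from the analytic dependence on $e$ of $S$, $\rho$, $g$, $h$ and the fact that the $e = 0$ case is exactly the identity.

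The main obstacle will be making the "gain" in the discrete-coboundary step fully rigorous with the right power of $q$: one must be careful that summing $q$ terms of size $q^{-3}$ does not merely give $q^{-2}$ as a crude triangle-inequality bound on the total variation but actually controls the oscillation of $u_q$ itself, which requires using that $u_q$ is a genuine coboundary for the period-$q$ translation (equivalently, that the sum of the right-hand sides around the full orbit vanishes, which it does because $\tet \mapsto \tet + 1/q$ is a rotation and both sides integrate to $1$ over the $q$-orbit) — together with the normalization $X_q(0) = 0$ to pin down the constant. The dependence on $e$ is routine once one observes the $e = 0$ degeneracy, but it must be threaded through every constant.
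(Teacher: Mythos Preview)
Your setup via the conjugacy equation
\[
X_q(\tet+1/q)-X_q(\tet)=Y_q(\tet)+Y_q(\tet)^3 g(X_q(\tet),Y_q(\tet))
\]
is exactly what the paper uses, and writing $X_q=\Id+u_q$ to obtain $u_q(\tet+1/q)-u_q(\tet)=r(\tet)$ with $|r|\le C(e)/q^3$ is correct. The gap is in the step where you pass from this difference relation to $\|u_q\|_{C^0}\le C(e)/q^2$. Telescoping from $u_q(0)=0$ gives $|u_q(k/q)|\le kC(e)/q^3\le C(e)/q^2$, but this only controls $u_q$ on the single orbit $\{k/q\}$. For an arbitrary $\tet$, the relation only yields $|u_q(\tet)-u_q(\tet\bmod 1/q)|\le C(e)/q^2$, and you have no information on $u_q$ restricted to the fundamental domain $[0,1/q)$ beyond the one value $u_q(0)=0$. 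The same issue recurs for $w_q=u_q'$: neither $\int_{\T}w_q=0$ nor the existence of one zero of $w_q$ lets you propagate the orbit-wise bound to a uniform $C^0$ bound without some \emph{transverse} control, i.e.\ an a~priori estimate on how fast $u_q$ or $w_q$ can vary over an interval of length $1/q$. Your ``discrete coboundary'' heuristic does not supply this.

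The paper closes precisely this gap by bringing in one extra piece of information that your proposal omits: since $X_q(\tet)=X(\tet,1/q)$ with $X$ analytic in both variables (and in $e$), all derivatives $\partial_\tet^k X_q$ are bounded uniformly in $q$, and in particular $\|X_q'''\|_{C^0}\le C(e)$ with $C(e)\to 0$. The paper then argues as follows: from the difference relation one gets, by the mean value theorem, points $\bar\tet_k\in(\tet_k,\tet_{k+1})$ with $|X_q'(\bar\tet_k)-1|\le C(e)/q^2$; applying the mean value theorem again to $X_q'$ yields points $\bbar\tet_k$ with $|X_q''(\bbar\tet_k)|\le C(e)/q$; finally a second-order Taylor expansion of $X_q'$ around $\bbar\tet_k$, using the uniform bound on $X_q'''$, interpolates these estimates over each interval of length $O(1/q)$ and gives $|X_q'(\tet)-1|\le C(e)/q^2$ for every $\tet$. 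This third-derivative bound is the missing ingredient; once you add it, your cohomological approach and the paper's mean-value/Taylor approach become essentially equivalent.
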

\begin{proof}
  In the proof of this statement, to simplify the notation,
  $C(e)$ will denote an arbitrary constant which depends on $e$
  only; its actual value might change from an instance to
  the next.  Recall that $X(0,\om)$ parametrizes a fixed
  point $P_0$ (i.e. one of end points of the major axis) for
  all $\om\in [0,1/3]$.  Now consider the $q$-periodic orbit
  leaving the point $P_0$: in angle coordinates the orbit is
  given by
  $$
  \{\theta_k = k/q\mod1\}.
  $$
  Then by~\eqref{Lazutkin-map} and the definition
  of $(X_q(\theta),Y_q(\theta))$ we have
  \begin{align*}
    f_L(X_q(\theta),Y_q(\theta))=(X_q(\theta+1/k),Y_q(\theta+1/k)).
  \end{align*}
 and
 \begin{align*}
   X_q(\tet_{k+1})-X_q(\tet_k)=Y_q(\tet_k)\left(1+Y^2_q(\tet_k)\,
   g(X_q(\tet_k),Y_q(\tet_k)) \right).
 \end{align*}
 By Corollary~\ref{lazutkin-coordinates-curves} we conclude
 that
  \begin{align*}
    \left|\frac{X_q(\tet_{k+1})-X_q(\tet_k)}{\tet_{k+1}-\tet_{k}}-1\right|
    \le \frac{C(e)}{q^2};
  \end{align*} by the intermediate value theorem we conclude that
  there exists some $\bar{\tet}_k\in(\tet_k,\tet_{k+1})$ so
  that $|X_q'(\bar\tet_k)-1| < C(e)/q^2$.
%
%
Likewise, $|\bar\tet_k-\bar\tet_{k+1}|\le 2/q$ and
we can find $\bbar\tet_k\in(\bar\tet_k,\bar\tet_{k+1})$ so that
  $|X''_{q}(\bbar\tet_k)| \le C(e)/q$.  Hence, for each
  $\tet\in[\bar\tet_k,\bar\tet_{k+1}]$ we can write
  \begin{align*} X'_q(\tet) =
    X'_q(\bbar\tet_k)+\int_{\bbar\tet_k}^{\tet}\left[
    X''_q(\bbar\tet_k)+\int_{\bbar\tet_k}^{\tet'}X'''_q(\tet'')d\tet''\right]
    d\tet'.
  \end{align*} Now recall that $X_q(\tet) = S(\tet,1/q)$,
  where $S$ is analytic in both arguments; in particular, all
  derivatives of $X_q$ are bounded uniformly in $q$.  Moreover,
  $\|X'''_q\| < C(e)$ such that $C(e)\to0$ as $e\to0$,
since, as noted before, $X_q$ depends analytically on $e$
and for $e = 0$ the function $X_q$ is the identity.

  We conclude that $|X'_q(\tet)-X'_q(\bbar\tet)| < C(e)/q^2$
  for any $\tet\in[\bar\tet_k,\bar\tet_{k+1}]$, which implies
  that $\|X_q'-1\|_{C^0} < C(e)/q^2$.  Our estimate then holds
  integrating in $\tet$.
\end{proof}%
We now finaly proceed to define the functions
$\{c_q(x), s_q(x)\}_{q > 2}$ which we hinted at in
Section~\ref{s_strategy}.  Although the definition of such
functions can be carried out for an arbitrary convex domain
$\Om_0$, let us restrict ourselves to the case
$\partial\Om_0 = \cE_e$, for which they enjoy stronger
properties which are crucial for our later construction.
Recall that $s(x)$ denotes the length parametrization of
$\partial\Om_0$ as a function of the Lazutkin
parametrization, which can by obtained by
inverting~\eqref{eq:Lazutkin}.  Since
$y=4 \,C_\Om \,\rho(s)^{1/3} \sin (\phi/2)$, for any
$(s,\phi)\in
\Gm_{1/q}$,~\eqref{e_estimateOnLazutkiny-curves}
implies that:
\begin{align*}%
  \left| \sin \phif_q\left(X_q^{-1}(x)\right) -
  \frac{\conq}{2C_{\Om_0} q\rho(x)^{1/3}}
 \right| \le
  \frac{2C}{q^3},
\end{align*}
where $\conq = q\sin(\pi/q)/\pi\in[1/2,1]$.  Also,
Corollary~\ref{lazutkin-coordinates-curves} implies that, in
the above expression, $C = C(e)\to 0$ as $e\to0$.  To
simplify our notations let us introduce the auxiliary
function $\yq(x) = \sin \phif_q\left(X_q^{-1}(x)\right)$ and
notice, moreover, that $q\yq(x)$ has a well defined limit as
$q\to \infty$.  Recall that in~\eqref{correction-function}
we defined the Lazutkin Density
$\mu(x) = 1/(2C_{\Om_0}\rho(x)^{1/3})$.  Recall that the
density function $\mu(x)$ given above, depends only on the
domain {$\Om_0$} (i.e.\ on the eccentricity $e$); in
particular, it does not depend on $q$.  Using the previous
bound we have
\begin{align}
  \label{correction-deviation}
  \left|\frac{q\yq(x)}{w_q\mu(x)}-1\right|\le \frac{C}{q^2}
\end{align}
for some $C$ depending on $C  _{\Om_0}$ and $\rho$.  For any $q>2$
define\footnote{ We will define the first five functions
  $c_0(x),\, c_i(x), s_i(x), i=1,2$ respectively in the
  next section.}
\begin{subequations}\label{dynam-basis}
  \begin{align}
    c_q(x)&=\frac{q\yq(x)}{\conq\mu(x)}\
            \frac{1}{X'_q(X_q^{-1}(x))}\ \cos \, 2\pi qX_q^{-1}(x),
    \\
    s_q(x)&=\frac{q\yq(x)}{\conq\mu(x)}\
            \frac{1}{X'_q(X_q^{-1}(x))}\ \sin  \,2\pi qX_q^{-1}(x).
  \end{align}
\end{subequations}
Observe that Lemma~\ref{action-angle-variation} implies that
the above functions tend to the corresponding Fourier Modes
as $q\to\infty$.  We will henceforth refer to them as the
\emph{Deformed Fourier Modes}.  The next lemma gives a bound
on the speed of this approximation.
\begin{lemma}
  \label{error-estimate}
  Let $\cE_e$ be an ellipse of eccentricity $e$ and
  perimeter $1$; there exists $C^*(e)$ with $C^*(e)\to0$ as
  $e\to0$ so that for any $q>2$,
  \begin{align*}
    \|s_q-\sin (2\pi q\,\cdot)\|_{C^0}< \frac{C^*(e)}{q},\quad
    \|c_q-\cos (2\pi q\,\cdot)\|_{C^0}
    &< \frac{C^*(e)}{q}.
  \end{align*}
\end{lemma}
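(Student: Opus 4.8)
The plan is to read the estimate off directly from the definition~\eqref{dynam-basis}, by isolating the ``correction factors'' appearing in $c_q$ and $s_q$ and bounding each one by an estimate already established above. Write
\[
c_q(x)=\rho_q(x)\,\cos\!\big(2\pi q\,X_q^{-1}(x)\big),\qquad
\rho_q(x):=\frac{q\yq(x)}{\conq\mu(x)}\cdot\frac{1}{X_q'(X_q^{-1}(x))},
\]
and likewise for $s_q$ with $\sin$ in place of $\cos$. The first ratio in $\rho_q$ is within $C(e)/q^2$ of $1$, uniformly in $x$, by~\eqref{correction-deviation}; its constant can be taken to vanish as $e\to 0$, since it comes from the bound $|\yq(x)-\conq\mu(x)/q|\le 2C(e)/q^3$ noted just before~\eqref{correction-deviation}, together with $\conq\mu(x)\ge\mu(x)/2$ and the fact that $\mu$ is bounded below by a positive constant (uniformly in $e$ for $e$ small, since $\mu\equiv\pi$ for the circle). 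The second ratio is handled by Lemma~\ref{action-angle-variation}: from $\|X_q'-1\|_{C^0}\le C(e)/q^2$ the value $X_q'(X_q^{-1}(x))$ lies within $C(e)/q^2$ of $1$ for every $x$, hence --- once $e$ is small enough that this is $<1/2$ for all $q>2$ --- its reciprocal lies within $2C(e)/q^2$ of $1$. Multiplying the two ratios, $\|\rho_q-1\|_{C^0}\le C'(e)/q^2$ with $C'(e)\to 0$ as $e\to 0$.

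It remains to replace the argument $X_q^{-1}(x)$ of the trigonometric factor by $x$. Since $\|X_q-\Id\|_{C^1}\le C(e)/q^2$ (again Lemma~\ref{action-angle-variation}), $X_q$ is a circle diffeomorphism that is $C(e)/q^2$-close to the identity in $C^0$, and so is its inverse: if $y=X_q^{-1}(x)$ then $|X_q^{-1}(x)-x|=|X_q(y)-y|\le\|X_q-\Id\|_{C^0}\le C(e)/q^2$. Using $|\cos a-\cos b|\le|a-b|$ (and the same for $\sin$) this costs exactly one power of $q$:
\[
\big|\cos(2\pi q\,X_q^{-1}(x))-\cos(2\pi q\,x)\big|\le 2\pi q\,|X_q^{-1}(x)-x|\le\frac{2\pi C(e)}{q},
\]
which is the dominant term and explains the $1/q$ (rather than $1/q^2$) rate in the statement.

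Combining the two steps, for all $q>2$ and all $x$,
\[
|c_q(x)-\cos(2\pi q\,x)|\le\|\rho_q-1\|_{C^0}+\big|\cos(2\pi q\,X_q^{-1}(x))-\cos(2\pi q\,x)\big|\le\frac{C'(e)}{q^2}+\frac{2\pi C(e)}{q}\le\frac{C^*(e)}{q},
\]
using $q>2\Rightarrow 1/q^2<1/q$, with $C^*(e):=C'(e)+2\pi C(e)\to 0$ as $e\to 0$; the bound for $s_q$ is identical. I expect the only point needing care to be the lower bound on $X_q'$ used to invert the second correction factor: the $C^1$-closeness to the identity supplies it for all $q>2$ only once the eccentricity is small (which is the regime relevant to the statement), while for $e$ in a compact subset of $(0,1)$ one instead treats separately the finitely many $q$ with $q^2\le 2C(e)$, using that each $X_q=S(\,\cdot\,,1/q)$ is a fixed analytic circle diffeomorphism; this is routine and does not affect the claimed behavior as $e\to 0$. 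No other obstacle is anticipated.
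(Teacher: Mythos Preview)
Your proof is correct and follows essentially the same route as the paper's own argument: split $c_q$ (resp.\ $s_q$) into the amplitude factor $\rho_q$ and the phase $\cos(2\pi qX_q^{-1}(x))$, bound $\|\rho_q-1\|_{C^0}$ by $C(e)/q^2$ via~\eqref{correction-deviation} and Lemma~\ref{action-angle-variation}, bound the phase discrepancy by $C(e)/q$ via Lemma~\ref{action-angle-variation}, and combine. Your write-up is in fact more detailed than the paper's (which simply states the three intermediate bounds and says ``from which we conclude''); in particular, your explicit treatment of why the constant in~\eqref{correction-deviation} tends to $0$ with $e$, and your remark on the lower bound for $X_q'$, fill in points the paper leaves implicit.
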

\begin{proof}
  By~\eqref{correction-deviation} and the bound of
  Lemma~\ref{action-angle-variation} we obtain
  \begin{align*}
    \left\|\frac{q\yq(x)}{\conq\mu(x)}\
            \frac{1}{X'_q(X_q^{-1}(x))}-1\right\|_{C^0} &< \frac{C^*(e)}{q^2};
    \intertext{likewise, Lemma~\ref{action-angle-variation} gives}
    \|\sin 2\pi qX_q^{-1}(x)-\sin 2\pi qx\|_{C^0} &<
                                    \frac{C^*(e)}q\\
    \|\cos 2\pi qX_q^{-1}(x)-\cos 2\pi qx\|_{C^0} &< \frac{C^*(e)}q
  \end{align*}
  from which we conclude.
\end{proof}
\begin{lemma}
  \label{l_projection-estimate}%
  Using the notations of Theorem
  \ref{perturbation-perimeter}, let $\cE_e$ be an ellipse of
  perimeter $1$ and eccentricity $e$ and $\partial\Om$ be a
  perturbation of $\cE_e$ identified by a $C^5$-smooth
  function\footnote{ Recall that we abuse notation and we
    also denote with $\bfn$ the perturbation as a function
    of the Lazutkin coordinate $x$; observe that since the
    change of variable is analytic, norms in arc-length and
    Lazutkin parametrization differ by some constant
    depending on $e$.} $\bfn(x)$; assume that $\Om$ has an
  integrable rational caustic $\Gm_{1/q}$ of rotation number
  $1/q$ for some $2 < q < c(e)\|\bfn\|_{C^1}^{-1/8}$. Then
  there exists $C=C(e,\|\bfn\|_{C^5}) > 0$ so that:
  \begin{align*}
    \left| \int \bfn(x) \mu(x) a_q(x) dx \right|
    \le Cq^8\|\bfn\|_{C^1}^2,
  \end{align*}
  where $a_q=c_q$ or $s_q$.
\end{lemma}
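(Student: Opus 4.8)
The plan is to deduce the lemma from Theorem~\ref{perturbation-perimeter} by reading off the $q$-th Fourier coefficient of the Deformation Function. Since $\Om$ admits an integrable rational caustic of rotation number $1/q$, the perimeter $L'_q(\tet)=\LOm_q(\tet)$ does not depend on $\tet$; the same is true of $L^0_q(\tet)=\LE_q(\tet)$, so $\Delta_q:=L'_q(\tet)-L^0_q(\tet)$ is a constant. Choosing the constant $c(e)$ in the hypothesis small enough that $2<q<c(e)\|\bfn\|_{C^1}^{-1/8}$ forces $q^8\|\bfn\|_{C^1}$ below the threshold of Theorem~\ref{perturbation-perimeter}, that theorem gives
\[
\big\|\cD(\bfn,S,\phif;1/q)-\Delta_q\big\|_{C^0}\le C\,q^8\|\bfn\|_{C^1}^2,\qquad C=C(e,\|\bfn\|_{C^5}).
\]

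Next I would isolate the relevant mode. The function $\cD(\bfn,S,\phif;1/q)$ is $1/q$-periodic in $\tet$: replacing $\tet$ by $\tet+1/q$ only cyclically permutes the $q$ summands in~\eqref{deformed-function}, once one passes to the universal cover and uses $S_q(\tet+1)=S_q(\tet)+1$ together with the $1$-periodicity of $\bfn$ and $\phif_q$; this is why pairing with $\cos(2\pi q\,\cdot)$ is natural. Moreover $\int_0^1\cos(2\pi q\tet)\,d\tet=0$ kills the constant $\Delta_q$, so it suffices to compute $\int_0^1\cD(\bfn,S,\phif;1/q)(\tet)\cos(2\pi q\tet)\,d\tet$. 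Writing $\cos(2\pi q\tet)=\cos\big(2\pi q(\tet+k/q)\big)$, I slide the cosine into the $k$-th summand, substitute $u=\tet+k/q$, and use that the resulting integrand is $1$-periodic to replace each of the $q$ shifted integrals by $\int_0^1$, so that
\[
\int_0^1\cD(\bfn,S,\phif;1/q)(\tet)\cos(2\pi q\tet)\,d\tet=2q\int_0^1\bfn\big(S_q(u)\big)\,\sin\phif_q(u)\,\cos(2\pi qu)\,du.
\]
Finally the change of variables $x=X_q(u)$ — under which $du=dx/X'_q(X_q^{-1}(x))$, $S_q(X_q^{-1}(x))=s(x)$, and $\sin\phif_q(X_q^{-1}(x))=\yq(x)$ — turns the right-hand side, by the definition~\eqref{dynam-basis} of $c_q$, into $2\conq\int_0^1\bfn(x)\mu(x)c_q(x)\,dx$; pairing with $\sin(2\pi q\tet)$ gives the same identity with $s_q$ in place of $c_q$.

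Combining the two displays (and their sine analogues), $\big|2\conq\int\bfn(x)\mu(x)a_q(x)\,dx\big|\le\|\cD(\bfn,S,\phif;1/q)-\Delta_q\|_{C^0}\le C\,q^8\|\bfn\|_{C^1}^2$, and since $\conq=q\sin(\pi/q)/\pi\ge1/2$ this is exactly the claimed estimate. The step needing care is the second paragraph: one has to keep the two meanings of $\bfn$ straight — as a function of arc length, which is how it enters $\cD$, versus as a function of the Lazutkin coordinate $x$, which is how it enters the statement — the identity $S_q(\tet)=s(X_q(\tet))$ being precisely what reconciles them after the substitution — and to verify the $1$-periodicity claims on the universal cover. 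There is no essential obstacle; the factor $q^8$, inherited from Theorem~\ref{perturbation-perimeter}, is what makes the estimate non-uniform in $q$.
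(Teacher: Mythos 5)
Your argument is correct and follows essentially the same path as the paper's proof: both use constancy of $\LOm_q$ and $\LE_q$ together with Theorem~\ref{perturbation-perimeter} to show $\cD$ is $Cq^8\|\bfn\|_{C^1}^2$-close to a constant, pair against $\cos(2\pi q\cdot)$ or $\sin(2\pi q\cdot)$ (which annihilates the constant), slide the trigonometric factor into each summand and substitute to collapse the sum, and finish with the change of variables $x=X_q(u)$ that produces exactly $\mu(x)a_q(x)$ from~\eqref{dynam-basis} up to the harmless factor $2\conq\in[1,2]$. The observation that $\cD$ is $1/q$-periodic is an extra motivating remark not needed in the computation, and your reconciliation of the two meanings of $\bfn$ via $S_q(X_q^{-1}(x))=s(x)$ is precisely the identification the paper makes implicitly.
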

\begin{proof}
  Denote $\cD(\tet) = [\cD(\bfn,S,\phif;1/q)](\tet)$ the
  Deformation Function given by~\eqref{deformed-function};
  then by definition we have
  \begin{align*}
    \int_0^1\cD(\tet)\sin(2\pi q\tet)\,d\tet &= 
    2q\int_0^1\ \bfn\left(X_q(\tet) \right) \sin \phif_q\left(\tet \right)\,\sin (2\pi q\tet)\,d\tet\\ &= 
    2\int_0^1 \bfn\left(X_q(\tet) \right) \,[q\yq(X_q(\tet))]\,\sin (2\pi q\tet)\,d\tet.
  \end{align*} Notice that if $\Om$ has an integrable
	rational caustic $\Gm_{1/q}$ of a rotation number $1/q$
  for some $q>2$, then, using the notation introduced in
  Theorem~\ref{perturbation-perimeter}, perimeters
  $\LE_q(\tet)$ and $\LOm_q(\tet)$ of the $q$-gons inscribed
  in $\cE$ and $\partial\Om$, respectively, are constant.
  Therefore, Theorem~\ref{perturbation-perimeter} implies
  that the Deformation Function $\cD(\theta)$ is
  $Cq^8\|\bfn\|_{C^1}^2$ close to a constant.  Since, for
  any $k$,
  $\int_{k/q}^{(k+1)/q} \sin (2\pi q\tet)\,d\tet=0$, we
  conclude that
  \begin{align*}
    \left|\int_0^1\cD(\tet)\sin(2\pi q\tet)\,d\tet\right| &\le  Cq^8\|\bfn\|^2_{C^1}
  \end{align*}
  On the other hand, let us rewrite
  $x=X_q(\tet),\ \tet=X^{-1}_q(x)$: we obtain:
  \begin{align*}
    &\int_0^1 \bfn (x)\, \left[q\yq(x)\right]\,
      \sin (2\pi qX_q^{-1}(x))\ dX_q^{-1}(x)\\ &= \conq\int_0^1 \bfn(x)\ \mu(x)\
                                                 \frac{q\yq(x)}{\conq\mu(x)}\ \frac{1}{X'_q(X_q^{-1}(x))}\
                                                 \sin (2\pi
                                                 qX_q^{-1}(x))\,
                                                 dx\\
    &=\conq\int_0^1\bfn(x)\mu(x)s_q(x) dx,
  \end{align*}
  which gives the required inequality for $s_q$.  Repeating
  the argument verbatim, replacing $\sin(2\pi q\tet)$ with
  $\cos(2\pi q\tet)$ gives the corresponding inequality for
  $c_q$; this concludes the proof.
\end{proof}
\begin{lemma} \label{coeff-decay-simpler}%
  Let $\bfn(x)$ be a $C^{1}$ function, $\cE_e$ be an ellipse
  of eccentricity $e$ and perimeter $1$.  Then there is
  $C=C(e)>0$ such that for each $q > 2$ we have
  \begin{align*}
    \left|\int \bfn(x) \mu(x)c_q(x)dx\right|&\le
    \frac{C\|\bfn\|_{C^\rcloseness}}{q},&
    \left|\int \bfn(x) \mu(x)s_q(x)dx\right|&\le
    \frac{C\|\bfn\|_{C^\rcloseness}}{q}.
  \end{align*}
\end{lemma}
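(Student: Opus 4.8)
The plan is to bound $\int \bfn(x)\mu(x)a_q(x)\,dx$ (with $a_q = c_q$ or $s_q$) by splitting $a_q$ into its leading Fourier part plus an error, and then estimating the two contributions separately. By Lemma~\ref{error-estimate} we may write, say, $c_q(x) = \cos(2\pi q x) + r_q(x)$ with $\|r_q\|_{C^0} \le C^*(e)/q$. The error term contributes $\left|\int \bfn(x)\mu(x) r_q(x)\,dx\right| \le \|\bfn\|_{C^0}\,\|\mu\|_{C^0}\,\|r_q\|_{C^0} \le C(e)\|\bfn\|_{C^0}/q$, which already has the required form since $\|\bfn\|_{C^0}\le\|\bfn\|_{C^1} = \|\bfn\|_{C^\rcloseness}$. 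So the whole problem reduces to estimating the genuinely oscillatory integral $\int \bfn(x)\mu(x)\cos(2\pi q x)\,dx$ (and the analogous sine integral).

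For that oscillatory integral, the natural approach is a single integration by parts: $\int \bfn(x)\mu(x)\cos(2\pi q x)\,dx = \frac{1}{2\pi q}\left[\bfn(x)\mu(x)\sin(2\pi q x)\right]_0^1 - \frac{1}{2\pi q}\int (\bfn\mu)'(x)\sin(2\pi q x)\,dx$. Since $x$ is the Lazutkin parametrization of a closed curve, both $\bfn$ and $\mu$ are $1$-periodic, so the boundary term vanishes. The remaining integral is bounded by $\frac{1}{2\pi q}\|(\bfn\mu)'\|_{C^0} \le \frac{1}{2\pi q}\bigl(\|\bfn\|_{C^1}\|\mu\|_{C^0} + \|\bfn\|_{C^0}\|\mu\|_{C^1}\bigr) \le \frac{C(e)}{q}\|\bfn\|_{C^1}$, using that $\mu$ is analytic (it depends only on $e$ through the curvature radius of $\cE_e$), hence $\|\mu\|_{C^1} = \|\mu\|_{C^1}(e)$ is a finite constant depending on $e$ alone. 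Here one only needs $\bfn \in C^1$, which is exactly the hypothesis, since $C^\rcloseness = C^1$.

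Combining the two pieces, $\left|\int \bfn(x)\mu(x)c_q(x)\,dx\right| \le C(e)\|\bfn\|_{C^1}/q$, and the sine case is identical with the roles of $\sin$ and $\cos$ interchanged. I do not expect any serious obstacle here: the only mild point to be careful about is confirming that the boundary terms in the integration by parts really cancel, i.e.\ that in Lazutkin coordinates the relevant functions are honestly periodic of period $1$ — this follows because $x$ runs over $\T = \R/\Z$ by the normalization in~\eqref{eq:Lazutkin} and $\bfn$, $\mu$, $X_q$ are all defined as functions on $\T$. One should also note that $c_q, s_q$ themselves are $C^1$ (they are built from the analytic functions $S, \phif, X_q$ and the analytic density $\mu$), so all the integrals written above make sense and the integration by parts is legitimate.
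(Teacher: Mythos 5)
Your argument is correct and matches the paper's proof essentially line-by-line: you split $c_q$ (resp.\ $s_q$) into $\cos(2\pi qx)$ (resp.\ $\sin(2\pi qx)$) plus an $O(C^*(e)/q)$ error using Lemma~\ref{error-estimate}, bound the error contribution trivially, and handle the oscillatory piece via the $O(1/q)$ decay of the $q$-th Fourier coefficient of the $C^1$ function $\bfn\mu$. The only cosmetic difference is that you carry out the integration by parts explicitly whereas the paper simply invokes the standard Fourier-coefficient decay for $C^1$ functions; the content is the same.
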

\begin{remark}
  In the above lemma, $C(e)$ does not tend to $0$ together
  with $e$.
\end{remark}
\begin{proof} Using Lemma~\ref{error-estimate} we have
  \begin{align*}
    \left|\int \bfn(x) \mu(x)c_q(x)dx - \int \bfn(x)
    \mu(x)\cos(2\pi qx)dx\right|\le \frac{C^*(e)\|\bfn\mu\|_{C^0}}q.
  \end{align*}
  Since $\mu(x)$ is analytic, the function $\bfn(x) \mu(x)$
  is $C^1$-smooth; hence, its $q$-th Fourier cosine
  coefficient satisfies the inequality
  \begin{align*}
    \left|\int \bfn(x)\, \mu(x)\,\cos (2\pi qx)dx\right| \le \frac{c\|\bfn\mu\|_{C^\rcloseness}}q
  \end{align*}
  This implies the required estimate, since $\|\mu\|_{C^1}$
  is bounded; the estimate for $s_q$ is completely analogous
  and it is omitted.
\end{proof}
\section{Selection of functional directions preserving the
  family of ellipses}\label{selected-five}
In this section we introduce the remaining $5$ Deformed
Fourier Modes, which we denote with $c_0,c_1,s_1,c_2,s_2$.
As in the case of the circle (see
Remark~\ref{r_motionDescription}), these five functions
generate homotheties ($c_0$), translations ($c_1,s_1$) and
hyperbolic rotations about an arbitrary axis ($c_2,s_2$).

In principle, we could define these functions for an
arbitrary smooth convex domain $\Om_0$.  We
refrain\footnote{ The reader could trivially modify our exposition
  and adapt it to the more general case.} to do so and
assume $\Omega_0$ is an ellipse, since all remaining
Deformed Fourier Modes have been defined only for ellipses.
To further fix ideas, assume that $\Omega_0 = \cE_e$ is
centered at the origin $O\in\R^2$ and that its major axis is
horizontal.  As usual, we assume that $\cE_e$ has perimeter
$1$.  


Let $(r,\ophi)$ denote polar coordinates on the plane; we
refer to $\{(r,\ophi): r\ge 0, \ophi = 0\}$ as the
\emph{polar axis}.  Let $r_e(\ophi)$ be the polar equation
of the ellipse $\cE_e$, i.e.
$\cE_e = \{(r_e(\ophi),\ophi)\,:\,\ophi\in\T\}$; let $x$ be
the Lazutkin parametrization of $\cE_e$ {so that
  $x = 0$ corresponds to the point} $(r_e(0),0)$.  Let
$x(\ophi)$ be the corresponding change of variable and
$\ophi(x)$ denote its inverse; observe that $x(\ophi)$ is
an analytic diffeomorphism. {Let $\theta^\trl_e(\ophi)$ be
the angle between the polar axis and the outward normal
to $\cE_e$ at $(r_e(\ophi),\ophi)$, measured in the
counter-clockwise direction.}  The function $\theta^{\trl}_e(\ophi)$
is strictly increasing and has topological degree $1$ by
the strict convexity of $\cE_e$. We gather that $\theta^\trl_e$
is an (analytic) diffeomorphism.  Moreover, $\theta^\trl_e$
depends analytically on $e$ and
$\|\theta^{\trl}_e-\Id\|_{C^\rcloseness}\to0$ as $e\to 0$.
Naturally, all functions on $\cE_e$ can be expressed with
respect to either the $\ophi$-\hspace*{0pt}parametrization
or the $x$-\hspace*{0pt}parametrization and differ via an
analytic change of variable; in particular, we let, with an
abuse of notation,
$\theta^{\trl}_e(x):=\theta^{\trl}_e(\ophi(x)),\
r_e(x):=r_e(\ophi(x))$.

We now fix $0\le e < 1$ and, in order to ease our notation,
let us drop $e$ from all subscripts.

Consider the ellipse $\cE^\hth\depp{a_0}$ obtained by replacing the
radial component $r(\ophi)$ with $\exp(a_0)r(\ophi)$ and
denote with $\bfn^\hth\depp{a_0}$ the corresponding perturbation
function so that $\cE^\hth\depp{a_0} = \cE+\bfn^\hth\depp{a_0}$.
Let us define the $0$-th Deformed Fourier mode as
\begin{align*}
    c_0(x)&:=r(x) \cos (\theta^\trl(x)-\ophi(x)).
\end{align*}
Observe that $\theta^\trl(x)-\ophi(x)$ is the angle
(measured in the counter-clockwise direction) between the
radial direction and the outer normal to $\cE$ at the point
identified by $x$.
\begin{lemma}\label{lm:homothety}%
  For $C$ depending on the eccentricity $e$ we have
  \begin{align*}
    \|\bfn^\hth\depp{a_0}- a_0\,c_0\|_{C^\rcloseness}\le Ca_0^2.
  \end{align*}
\end{lemma}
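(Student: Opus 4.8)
The plan is to compute the perturbation function $\bfn^\hth\depp{a_0}$ exactly in Lazutkin tubular coordinates and then Taylor-expand in $a_0$. First I would fix the point $Q = (r(x),\ophi(x))$ on $\cE$ corresponding to the Lazutkin parameter $x$, and let $Q' = (\exp(a_0)r(\ophi),\ophi)$ be the radially rescaled point on $\cE^\hth\depp{a_0}$, which lies on the same ray through the origin. By definition of the tubular coordinates, $\bfn^\hth\depp{a_0}(x)$ is the signed distance from $\cE$ to $\cE^\hth\depp{a_0}$ measured along the outward normal $\nu(x)$ to $\cE$ at $Q$; more precisely, $\bfn^\hth\depp{a_0}(x)$ is obtained by projecting $Q'$ orthogonally onto $\cE$ and reading off the normal displacement, so to leading order $\bfn^\hth\depp{a_0}(x)$ equals the inner product of the displacement vector $Q'-Q$ with $\nu(x)$, up to an error quadratic in $\|Q'-Q\|$. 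The displacement vector is $Q'-Q = (\exp(a_0)-1)\,r(x)\,\hat u(\ophi(x))$, where $\hat u(\ophi)$ is the radial unit vector; since the angle between $\hat u(\ophi(x))$ and $\nu(x)$ is precisely $\theta^\trl(x)-\ophi(x)$ (as observed right before the statement), the inner product is $(\exp(a_0)-1)\,r(x)\cos(\theta^\trl(x)-\ophi(x)) = (\exp(a_0)-1)\,c_0(x)$.

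Next I would control the errors. There are two sources: the discrepancy between the orthogonal-projection definition of $\bfn^\hth\depp{a_0}$ and the naive inner product $\langle Q'-Q,\nu(x)\rangle$, and the discrepancy between $\exp(a_0)-1$ and $a_0$. The first is a standard curvature estimate: if a point lies at (small) distance $d$ from a smooth curve, then the normal coordinate of that point differs from $\langle \text{displacement},\nu\rangle$ by $O(d^2)$, with the implied constant controlled by the curvature of $\cE$, hence by $e$. Here $d = \|Q'-Q\| = |\exp(a_0)-1|\,r(x) = O(a_0)$ with constant depending on $e$ (via $\max r$), so this contributes $O(a_0^2)$. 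The second is the elementary inequality $|\exp(a_0)-1-a_0|\le C a_0^2$ for bounded $a_0$, and since $c_0$ is a fixed analytic function on $\cE$ with $\|c_0\|_{C^\rcloseness}$ bounded in terms of $e$, this also contributes $O(a_0^2)$ in $C^\rcloseness = C^1$ norm. Combining, $\|\bfn^\hth\depp{a_0}-a_0 c_0\|_{C^1}\le C a_0^2$.

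The one point requiring genuine care — and the main (though still routine) obstacle — is that the estimate is claimed in the $C^\rcloseness$ norm, i.e.\ $C^1$, not merely $C^0$: one must differentiate the exact formula for $\bfn^\hth\depp{a_0}$ in $x$ and check that the derivative of the error is still $O(a_0^2)$. For this I would write the exact relation implicitly: $\bfn^\hth\depp{a_0}(x)$ is determined by the condition that $Q + \bfn^\hth\depp{a_0}(x)\,\nu(x)$, after being re-expressed in Lazutkin tubular coordinates, has first coordinate $x$ and lies on the radial ray through $Q'$; equivalently one can use the change-of-variables apparatus already set up in the proof of Lemma~\ref{l_approximationLemma} (with $\bar\cE = \cE^\hth\depp{a_0}$), which gives exactly an expression of the form $\bfn^\hth\depp{a_0}(x) = a_0 c_0(x) + (\text{analytic in }x)\cdot O(a_0^2)$ with all $x$-derivatives of the remainder uniformly bounded, because the tubular-coordinate change of variables between $\cE$ and $\cE^\hth\depp{a_0}$ is analytic and $O(a_0)$-close to the identity in every $C^r$ norm. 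Differentiating term by term then yields the $C^1$ (indeed $C^\rcloseness$) bound. Since $\rcloseness = 1$ this is no stronger than $C^1$ and the argument is entirely elementary; the only thing to keep honest is that every constant is allowed to depend on $e$, which is exactly what the statement permits.
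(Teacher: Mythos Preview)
Your proposal is correct and is exactly the elementary geometry the paper has in mind: the authors' own proof reads, in its entirety, ``The proofs follow from elementary geometry and are left to the reader.'' Your computation of the radial displacement projected onto the normal, together with the two $O(a_0^2)$ error terms and the observation that analyticity of the tubular change of variables upgrades the estimate to $C^{\rcloseness}$, is the intended argument.
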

Similarly, for any (Cartesian) vector $(a_1,b_1)$, consider the ellipse
$\cE^\trl\depp{a_1,b_1}$ obtained by translating $\cE$ by $(a_1,b_1)$ and denote with
$\bfn^\trl\depp{a_1,b_1}$ the corresponding perturbation
function.  Let us define the first and second Deformed
Fourier modes as:
\begin{align*}
    c_1(x) &:= \cos(\theta^\trl(x)),& s_1(x) &:= \sin(\theta^\trl(x)).
  \end{align*}%
\begin{lemma}
  \label{lm:translation}
  For $C$ depending on the eccentricity $e$ we have:
  \begin{align*}
  \|\bfn^\trl\depp{a_1,b_1}-a_1c_1-b_1s_1\|_{C^\rcloseness}\le C
  (a_1^2+b_1^2).
  \end{align*}
\end{lemma}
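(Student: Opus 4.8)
The plan is to compute, to first order, how the boundary of $\cE$ moves under the rigid translation by the vector $(a_1,b_1)$, expressed in the Lazutkin tubular coordinates attached to $\cE$. Fix a point $P = (r(x)\cos\ophi(x), r(x)\sin\ophi(x))$ on $\cE$ identified by the Lazutkin parameter $x$. The translated ellipse $\cE^\trl\depp{a_1,b_1}$ is just $\cE$ shifted by $(a_1,b_1)$; to express it as a graph $\cE+\bfn^\trl\depp{a_1,b_1}$ in the tubular coordinates of $\cE$, I would, for each $x$, look at the normal line to $\cE$ at $P$ and find where the translated ellipse meets it. The key elementary fact is that the signed normal displacement of a curve under an infinitesimal translation by a unit vector $u$ equals the inner product of $u$ with the outward unit normal at that point. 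Since the outward normal to $\cE$ at the point parametrized by $x$ makes angle $\theta^\trl(x)$ with the polar axis, the outward unit normal is $(\cos\theta^\trl(x), \sin\theta^\trl(x))$, so the first-order normal displacement produced by the translation $(a_1,b_1)$ is exactly $a_1\cos\theta^\trl(x) + b_1\sin\theta^\trl(x) = a_1 c_1(x) + b_1 s_1(x)$. This identifies the linear term; the content of the lemma is the quadratic error bound.

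For the quantitative statement I would proceed as follows. Parametrize the translated ellipse and write down the exact equation defining $\bfn^\trl\depp{a_1,b_1}(x)$: it is the solution $n$ of the condition that the point $P + n\,\nu(x)$ lies on $\cE^\trl\depp{a_1,b_1}$, where $\nu(x) = (\cos\theta^\trl(x),\sin\theta^\trl(x))$ is the outward normal and $P = P(x)\in\cE$, provided $(a_1,b_1)$ is small enough that this is well-defined (here one uses that $\cE$ is strictly convex with uniformly bounded curvature, so the tubular neighborhood has a definite size depending only on $e$). Equivalently, $\bfn^\trl\depp{a_1,b_1}(x)$ is determined by requiring $P(x) + n\nu(x) - (a_1,b_1) \in \cE$. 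Expanding the defining relation in powers of $(a_1,b_1)$ via Taylor's theorem with remainder — the defining function for $\cE$ being analytic, with all relevant derivatives bounded in terms of $e$ — gives $\bfn^\trl\depp{a_1,b_1}(x) = a_1 c_1(x) + b_1 s_1(x) + R(x)$ with $|R(x)| \le C(e)(a_1^2+b_1^2)$. To upgrade the pointwise bound to a $C^{\rcloseness}$ bound (i.e.\ a $C^1$ bound), I would differentiate the implicit relation in $x$ once and repeat the estimate: since $\theta^\trl$ and $r$ are analytic functions of $x$ with derivatives controlled by $e$, and the change of variables between the Cartesian and Lazutkin descriptions is analytic, the derivative of the remainder is likewise $O(e)$-controllably bounded by $C(e)(a_1^2+b_1^2)$. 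This can also be phrased more slickly using the smooth dependence of solutions of the implicit-function equation on the parameter $(a_1,b_1)$: the map $(a_1,b_1)\mapsto \bfn^\trl\depp{a_1,b_1}\in C^1$ is analytic near $0$, its value at $0$ is $0$, and its derivative at $0$ is $(a_1,b_1)\mapsto a_1 c_1 + b_1 s_1$, whence the second-order Taylor remainder is quadratically small with constant depending on $e$.

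The main obstacle — really the only non-routine point — is being careful about the geometry of the tubular coordinates: one must verify that for $(a_1,b_1)$ in a fixed neighborhood (of size depending only on $e$) the translated ellipse still lies inside the tubular neighborhood $U_{\cE}$ of $\cE$ on which Lazutkin tubular coordinates are defined, so that $\bfn^\trl\depp{a_1,b_1}$ is well-defined and smooth, and that all the Taylor constants are uniform in $x\in\T$. This uniformity is immediate here because $\cE$ is compact and analytic, with curvature bounded away from $0$ and $\infty$ in terms of $e$ alone; hence the implicit function theorem applies with uniform constants and the remainder estimate is uniform in $x$. I note that Lemma~\ref{lm:homothety} is proved by exactly the same scheme, with the one-parameter family of dilations $r\mapsto e^{a_0}r$ in place of translations: its linear term is the normal component of the infinitesimal dilation vector field $r\,\partial_r$ at the boundary point, which is $r(x)\cos(\theta^\trl(x)-\ophi(x)) = c_0(x)$ since $\theta^\trl(x)-\ophi(x)$ is precisely the angle between the radial direction and the outward normal, and the quadratic remainder bound follows identically.
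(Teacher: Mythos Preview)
Your argument is correct and is exactly the kind of elementary computation the paper has in mind: the paper's own ``proof'' of Lemmata~\ref{lm:homothety}--\ref{lm:hyperbolic} reads in full ``The proofs follow from elementary geometry and are left to the reader.'' Your identification of the linear term as the normal component of the translation vector, together with the analytic implicit-function/Taylor remainder argument (with constants depending only on $e$ via the curvature bounds of $\cE_e$), is precisely the intended route.
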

Finally, let $\cE^\ecc\depp{a_2,b_2}$ be the ellipse
obtained by applying to $\cE$ the hyperbolic rotation
generated by the linear map
\begin{align}
  \label{hyperb-rotation}
  L\depp{a_2,b_2} = \exp
  \matr {a_2}{b_2}{b_2}{-a_2}.
\end{align}
Observe that the eccentricity $e^\ecc\depp{a_2,b_2}$ of the
ellipse $\cE^\ecc\depp{a_2,b_2}$ satisfies
$|e^\ecc\depp{a_2,b_2}-e|\le C\sqrt{a_2^2+b^2_2}$, where
$C=C(e)$.  Let $\bfn^\ecc\depp{a_2,b_2}$ be the
corresponding perturbation function and define
$\tet^\ecc(\ophi) := (\theta^\trl(\ophi)+\ophi)/2$; observe
that $\tet^\ecc$ is an analytic diffeomorphism satisfying
$\|\tet^\ecc-\Id\|_{C^\rcloseness}\to 0$ as $e\to0$.  Once
again we abuse notation and write $\tet^\ecc(x)$ for
$\tet^\ecc(\ophi(x))$; we can then define the third and
fourth Deformed Fourier mode as:
\begin{align*}
  c_2(x)&:=r(x)\cos 2\tet^\ecc(x)& s_2(x)&:=r(x)\sin 2\tet^\ecc(x).
\end{align*}
\begin{lemma}\label{lm:hyperbolic}
  For $C$ depending on the eccentricity $e$ we have
  \begin{align*}
    \|\bfn^\ecc\depp{a_2,b_2}- a_2 c_2-b_2s_2\|_{C^\rcloseness}\le C
    (a_2^2+b_2^2).
  \end{align*}
\end{lemma}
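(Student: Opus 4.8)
The plan is to identify the linear-in-$(a_2,b_2)$ part of the perturbation function $\bfn^\ecc\depp{a_2,b_2}$ by an infinitesimal computation, check that it equals $a_2c_2+b_2s_2$, and then invoke analyticity in the parameters to bound the remainder quadratically. Since $L\depp{0,0}=\Id$ and $(a_2,b_2)\mapsto L\depp{a_2,b_2}$ is entire, the family of curves $\cE^\ecc\depp{a_2,b_2}=L\depp{a_2,b_2}\cE$ depends analytically on $(a_2,b_2)$, and (using $|e^\ecc\depp{a_2,b_2}-e|\le C\sqrt{a_2^2+b_2^2}$, already noted) stays $C^\rcloseness$-close to $\cE$ for $(a_2,b_2)$ in a neighborhood of the origin whose size depends only on $e$. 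By the same tubular-coordinate bookkeeping carried out for Lemma~\ref{l_approximationLemma} — now applied with the analytic change of variables between $\cE$ and $\cE^\ecc\depp{a_2,b_2}$ in place of the one between $\cE_e$ and $\bar\cE$ — the map $(a_2,b_2)\mapsto\bfn^\ecc\depp{a_2,b_2}\in C^\rcloseness$ is real-analytic near $(0,0)$ and vanishes at the origin; hence it suffices to compute its first differential and then appeal to Taylor's theorem with $C^\rcloseness$-valued remainder.

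To compute the differential I would parametrize $\cE$ by its polar angle, writing the point at angle $\ophi$ as $P(\ophi)=r(\ophi)\vectr{\cos\ophi}{\sin\ophi}$ and the outward unit normal there as $\nu(\ophi)=\vectr{\cos\theta^\trl(\ophi)}{\sin\theta^\trl(\ophi)}$ (this is precisely the definition of $\theta^\trl$). Setting $\cE_t=L\depp{ta_2,tb_2}\cE$, the point of $\cE_t$ is $P_t(\ophi)=L\depp{ta_2,tb_2}P(\ophi)$, with $\dot P_0(\ophi)=\matr{a_2}{b_2}{b_2}{-a_2}P(\ophi)$. Writing $\cE_t$ as a graph over $\cE$ in Lazutkin tubular coordinates, the point with foot at Lazutkin parameter $x$ is $Q(x)+\bfn_t(x)\nu(x)$ with $Q(x)=P(\ophi(x))$; differentiating at $t=0$ the identity $P_t(\ophi(x,t))=Q(x)+\bfn_t(x)\nu(x)$ and pairing with $\nu(x)$ — the tangential term $P_0'(\ophi(x))\dot\ophi_0(x)$ drops out, being $\perp\nu(x)$ — gives $\dot\bfn_0(x)=\langle \dot P_0(\ophi(x)),\nu(x)\rangle$. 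A direct computation yields
\[
\Big\langle \matr{a_2}{b_2}{b_2}{-a_2}P(\ophi),\,\nu(\ophi)\Big\rangle
= a_2\,r(\ophi)\cos\!\big(\theta^\trl(\ophi)+\ophi\big)+b_2\,r(\ophi)\sin\!\big(\theta^\trl(\ophi)+\ophi\big),
\]
and since $2\tet^\ecc(\ophi)=\theta^\trl(\ophi)+\ophi$ by definition, this is exactly $a_2c_2(x)+b_2s_2(x)$ after passing to the $x$-variable. Thus the first differential of $(a_2,b_2)\mapsto\bfn^\ecc\depp{a_2,b_2}$ at the origin is $a_2c_2+b_2s_2$, and the analyticity discussed above gives $\|\bfn^\ecc\depp{a_2,b_2}-a_2c_2-b_2s_2\|_{C^\rcloseness}\le C(a_2^2+b_2^2)$ with $C=C(e)$, where the constant collects the $C^\rcloseness$-bounds on $r$, $\theta^\trl$, $\tet^\ecc$, the analytic change of variable between the $\ophi$- and $x$-parametrizations, and the second-order Taylor control of $L\depp{\cdot,\cdot}$.

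I expect the only genuinely delicate point to be the passage from the pointwise infinitesimal identity to the $C^\rcloseness$-statement, i.e.\ controlling in $C^1$ (not merely $C^0$) both the reparametrization effect and the nonlinearity of the normal projection onto the nearby curve; but this is precisely the estimate already established in the proof of Lemma~\ref{l_approximationLemma}, so I would quote it rather than redo it. The same scheme covers Lemmas~\ref{lm:homothety} and~\ref{lm:translation} verbatim: the displacement field is $a_0\,P(\ophi)$, respectively the constant $(a_1,b_1)$, whose normal components $a_0\,r\cos(\theta^\trl-\ophi)$ and $a_1\cos\theta^\trl+b_1\sin\theta^\trl$ are exactly $a_0c_0$ and $a_1c_1+b_1s_1$.
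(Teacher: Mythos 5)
Your proposal is correct, and it is exactly the ``elementary geometry'' that the paper leaves to the reader: linearize the displacement field of the one-parameter family in $(a_2,b_2)$, take its normal component $\langle \dot P_0,\nu\rangle$, and observe that $a_2 r\cos(\theta^\trl+\ophi)+b_2 r\sin(\theta^\trl+\ophi)=a_2c_2+b_2s_2$ since $2\tet^\ecc=\theta^\trl+\ophi$, with the quadratic remainder controlled in $C^\rcloseness$ by analyticity of $(a_2,b_2)\mapsto\bfn^\ecc\depp{a_2,b_2}$ via the same tubular-coordinate bookkeeping as in Lemma~\ref{l_approximationLemma}. The computation, the identification of the normal component with the Deformed Fourier Modes, and the scheme for upgrading the infinitesimal identity to the $C^\rcloseness$ bound are all sound and match the intended argument.
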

\begin{proof}[Proofs of
  Lemmata~\ref{lm:homothety}--\ref{lm:hyperbolic}]
  The proofs follow from elementary geometry and are left to
  the reader.
\end{proof}
\begin{corollary}\label{c_combiningLemmata}
  Let $\cE$ be an ellipse of eccentricity $e$ and perimeter
  $1$ and let $\bfn$ be a linear combination of $c_0$,
  $c_1$, $s_1$, $c_2$ and $s_2$, i.e.
  \begin{align*}
    \bfn= a_0c_0+a_1c_1+b_1s_1+a_2c_2+b_2s_2
  \end{align*}
  for some $a_0,a_1,b_1,a_2,b_2$ which we assume to be
  sufficiently small.  Then there exists $C$ depending on
  the eccentricity $e$ and an ellipse $\bar\cE$ so that
  $\bar\cE = \cE+ \bfn_{\bar\cE}$ with
  \begin{align*}
    \|\bfn - \bfn_{\bar\cE}\|_{C^{\rcloseness}} \le C  \| \bfn \|^2_{C^{\rcloseness}}.
  \end{align*}
\end{corollary}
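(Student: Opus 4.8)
The plan is to realize the given function $\bfn$ as the perturbation function of an explicit ellipse, up to an error quadratic in the coefficients, and then to bound that error by a first-order Taylor expansion. The explicit ellipse is obtained from $\cE$ by composing the three families of transformations appearing in Lemmata~\ref{lm:homothety}--\ref{lm:hyperbolic}. Namely, for $v=(a_0,a_1,b_1,a_2,b_2)$ in a small neighbourhood of $0\in\R^5$, let $G_v$ be the affine map that first applies the hyperbolic rotation $L\depp{a_2,b_2}$, then the homothety $p\mapsto\exp(a_0)\,p$, and finally the translation by $(a_1,b_1)$; set $\cE_v:=G_v(\cE)$. Since $G_v$ is affine, $\cE_v$ is an ellipse, and since $G_0=\Id$, for $\|v\|$ small $\cE_v$ is $O(\|v\|)$-close to $\cE$ and can therefore be written $\cE_v=\cE+\bfn_{\cE_v}$ for a function $\bfn_{\cE_v}(x)$ in the Lazutkin tubular coordinates of $\cE$. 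The ellipse $\bar\cE$ of the statement will be $\cE_v$ for the value of $v$ whose coordinates are exactly the prescribed $a_0,a_1,b_1,a_2,b_2$.

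First I would check that $v\mapsto\bfn_{\cE_v}$ is real-analytic (in particular $C^2$) from a neighbourhood of $0$ into $C^\rcloseness(\T)$: the family $v\mapsto\cE_v$ is an analytic family of analytic curves, and passing from a curve $C^0$-close to $\cE$ to its perturbation function amounts to solving the analytic orthogonal-projection equation onto $\cE$ and reading off the normal displacement, which depends analytically on the curve by the implicit function theorem. This yields a Taylor expansion $\bfn_{\cE_v}=\bfn_{\cE_0}+D\,v+R(v)$ with $\bfn_{\cE_0}=0$, $D\colon\R^5\to C^\rcloseness(\T)$ linear, and $\|R(v)\|_{C^\rcloseness}\le C(e)\|v\|^2$ for $\|v\|$ small. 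To identify $D$, note that at $v=0$ all three transformations are the identity, so restricting to one coordinate axis at a time isolates one of the three sub-families: along the $a_0$-axis one gets $\bfn^\hth\depp{a_0}$, along the $(a_1,b_1)$-plane one gets $\bfn^\trl\depp{a_1,b_1}$, and along the $(a_2,b_2)$-plane one gets $\bfn^\ecc\depp{a_2,b_2}$; by Lemmata~\ref{lm:homothety}--\ref{lm:hyperbolic} their first-order terms are $a_0c_0$, $a_1c_1+b_1s_1$ and $a_2c_2+b_2s_2$ respectively. Hence $D\,v=a_0c_0+a_1c_1+b_1s_1+a_2c_2+b_2s_2$, which for our choice of $v$ is precisely $\bfn$; taking $\bar\cE:=\cE_v$ gives $\bfn_{\bar\cE}=\bfn+R(v)$, i.e.\ $\|\bfn-\bfn_{\bar\cE}\|_{C^\rcloseness}\le C(e)\|v\|^2$.

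It remains to upgrade the bound $O(\|v\|^2)$ to the required $O(\|\bfn\|_{C^\rcloseness}^2)$. This follows from the linear independence of $\{c_0,c_1,s_1,c_2,s_2\}$ in $C^\rcloseness(\T)$ --- which is elementary and, in any case, immediate by continuity from the circle case $e=0$, where these five functions reduce to $1,\cos\ophi,\sin\ophi,\cos 2\ophi,\sin 2\ophi$ --- since it provides $c(e)>0$ with $\|\bfn\|_{C^\rcloseness}\ge c(e)\|v\|$, whence $\|v\|^2\le C(e)\|\bfn\|_{C^\rcloseness}^2$ and the claim. I expect the only delicate point to be the analytic (or merely $C^2$) dependence of the perturbation function on the curve, needed to control $R(v)$; an alternative that avoids this abstraction is to apply the three transformations one at a time and track the perturbation function at each stage, using that homotheties and translations preserve the Lazutkin parametrization and act on tubular coordinates by $(x,n)\mapsto(x,\exp(a_0)n)$, resp.\ $(x,n)\mapsto(x,n)$, so that a change-of-base estimate in the spirit of Lemma~\ref{l_approximationLemma}, together with Lemmata~\ref{lm:homothety}--\ref{lm:hyperbolic}, keeps every intermediate error quadratic.
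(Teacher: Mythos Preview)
Your argument is correct. The main line you propose --- compose the three affine maps into a single $5$-parameter family $v\mapsto\cE_v$, Taylor-expand $v\mapsto\bfn_{\cE_v}$ to second order, identify the differential via Lemmata~\ref{lm:homothety}--\ref{lm:hyperbolic}, and then convert the $O(\|v\|^2)$ remainder into $O(\|\bfn\|_{C^\rcloseness}^2)$ via linear independence of the five modes --- works and is somewhat cleaner than the paper's proof; interestingly, the ``alternative'' you sketch at the end is exactly the route the paper takes. The paper proceeds iteratively: it applies the homothety, the translation and the hyperbolic rotation one at a time, invoking Lemma~\ref{l_approximationLemma} after each step to rewrite $\partial\Om$ over the new reference ellipse. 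This introduces a bookkeeping issue your approach bypasses entirely: after passing from $\cE$ to $\cE^*$ the residual $a_1c_1+b_1s_1+a_2c_2+b_2s_2$ involves modes defined relative to $\cE$, whereas Lemma~\ref{lm:translation} must be applied relative to $\cE^*$; the paper absorbs this by noting $\|c^*_q-c_q\|_{C^\rcloseness}=O(a_0)$, so the mismatch is quadratic. Your single-shot Taylor expansion sidesteps this at the price of the (mild) abstract claim that the normal-graph map depends analytically on the parameters. One small imprecision in your alternative sketch: translations do not act trivially on the tubular coordinates of $\cE$ --- the translated ellipse sits genuinely off-axis in the normal bundle of $\cE$ --- which is precisely why the paper needs Lemma~\ref{l_approximationLemma} at that stage too.
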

\begin{proof}
  Let $\Om$ be so that $\partial\Om = \cE+\bfn$; denote with
  $\cE^* = \cE^\hth\depp{a_0}$ the ellipse obtained by
  applying to $\cE$ the homothety by $\exp(a_0)$ and let
  $\bfn_{\cE^*} = \bfn^\hth\depp{a_0}$.  By
  Lemma~\ref{lm:homothety} we have
  $\|\bfn_{\cE^*}-a_0c_0\|_{C^\rcloseness} < Ca_0^2$.  Let
  $\bfn^*$ be so that $\partial\Om = \cE^*+\bfn^*$; then by
  Lemma~\ref{l_approximationLemma} we gather that
  $\|\bfn^*-(\bfn-\bfn_{\cE^*})\|_{C^{\rcloseness}} <
  Ca_0\|\bfn-\bfn_{\cE^*}\|_{C^{\rcloseness}}$.
  Combining with the above estimate and by definition of
  $\bfn$ we conclude that
  $$
	\|\bfn^*-(a_0c_0+a_1c_1+b_1s_1+a_2c_2+b_2s_2)\|\le
  C\|\bfn\|^2_{C^\rcloseness}.
$$
  Let $c^*_q$ and $s^*_q$ denote the Deformed Fourier modes
  for $\cE^*$; then by construction we have
  $\|c^*_q-c_q\|_{C^\rcloseness} < Ca_0$ (and similarly for
  $s^*_q-s_q$) for $q = 1,2$.  We conclude that:
  \begin{align*}
    \|\bfn^*-(a_1c^*_1+b_1s^*_1+a_2c^*_2+b_2s^*_2)\|\le C\|\bfn\|^2_{C^\rcloseness}.
  \end{align*}
  Now let $\cE^{**} = \cE^{*\trl}\depp{a_1,b_1}$ be the
  ellipse obtained by applying to $\cE^*$ the translation by
  the vector $(a_1,b_1)$ and let
  $\bfn^*_{\cE^{**}} = \bfn^{*\trl}\depp{a_1,b_1}$; by
  Lemma~\ref{lm:translation} we have
  $$
\|\bfn^*_{\cE^{**}}-(a_1c^*_1+b_1s^*_1)\|\le C(a_1^2+b_1^2).
$$
  Let $\bfn^{**}$ be so that $\partial\Om = \cE^{**}+\bfn^{**}$
  and let $c_q^{**}$ and $s_q^{**}$ denote the Deformed
  Fourier modes for $\cE^{**}$; then arguing as before we
  conclude that
  \begin{align*}
    \|\bfn^{**}-(a_2c^{**}_2+b_2s^{**}_2)\|\le C\|\bfn\|^2_{C^{\rcloseness}}.
  \end{align*}
  Finally, let $\bar\cE = \cE^{**\ecc}\depp{a_2,b_2}$ be
  the ellipse obtained by applying to $\cE^{**}$ the
  hyperbolic rotation $L\depp{a_2,b_2}$ and let
  $\bfn^{**}_{\bar\cE} = \bfn^{**\ecc}\depp{a_2,b_2}$; by
  Lemma~\ref{lm:hyperbolic} we have
  $$
\|\bfn^{**}_{\bar\cE}-(a_2c_2^{**}+b_2s_2^{**})\|_{C^\rcloseness}\le
  C(a_2^2+b_2^2).
$$
Let $\bar\bfn$ be so that $\partial\Om =
  \bar\cE+\bar\bfn$; arguing once again as before, we
  conclude that $\|\bar\bfn\|_{C^\rcloseness}\le
  C\|\bfn\|_{C^\rcloseness}^2$, which then concludes our
  proof by means of Lemma~\ref{l_approximationLemma}.
\end{proof}

\begin{remark*} The norm $\|\cdot\|_{C^\rcloseness}$ in all
  previous estimates could in fact be replaced with the norm
  $\|\cdot\|_{C^r}$ for any $r\ge 0$, since all involved
  quantities are analytic functions.
\end{remark*}
  We can now extend
  Lemma~\ref{error-estimate}:
\begin{lemma}\label{error-estimate-2}
  In the notation of Lemma~\ref{error-estimate} and possibly
  increasing $C^{*}(e)$, for any positive integer $q$ we
  have
  \begin{align*}
    \|c_0-1\|_{C^0}&\le C^{*}(e),&
    \|c_q-\cos(2\pi q\cdot)\|_{C^0}&\le \frac{C^*(e)}q,&
    \|s_q-\sin(2\pi q\cdot)\|_{C^0}&\le \frac{C^*(e)}q.
  \end{align*}
\end{lemma}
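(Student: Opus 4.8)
The bound for every $q\ge 3$ is nothing but the content of Lemma~\ref{error-estimate}, so I would only need to address the five modes $c_0,c_1,s_1,c_2,s_2$ that were introduced in Section~\ref{selected-five} through explicit geometric formulas rather than via~\eqref{dynam-basis}. Since only finitely many functions are involved, it suffices to bound each of $\|c_0-1\|_{C^0}$, $\|c_1-\cos(2\pi\cdot)\|_{C^0}$, $\|s_1-\sin(2\pi\cdot)\|_{C^0}$, $\|c_2-\cos(4\pi\cdot)\|_{C^0}$ and $\|s_2-\sin(4\pi\cdot)\|_{C^0}$ by a quantity that depends on $e$ alone and vanishes as $e\to0$; enlarging the constant $C^*(e)$ of Lemma~\ref{error-estimate} so that it dominates all of these then gives the statement uniformly in $q$.

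The plan is to exploit analyticity in the eccentricity. The building blocks of the five special modes --- the polar radius $r_e$, the outward--normal angle $\theta^\trl_e$, the auxiliary function $\theta^\ecc_e=(\theta^\trl_e+\Id)/2$, and the change of variables $x\mapsto\ophi(x)$ between the Lazutkin and the polar parametrisations of $\cE_e$ --- all depend analytically on $e$, and it was already observed in Section~\ref{selected-five} that $\|\theta^\trl_e-\Id\|_{C^1}\to0$ and $\|\theta^\ecc_e-\Id\|_{C^1}\to0$ as $e\to0$. At $e=0$ the ellipse is a circle, for which $\theta^\trl_0=\theta^\ecc_0=\Id$ in the $\ophi$--variable, $r_e$ is constant, and $\ophi(x)$ is linear; substituting these values into the formulas $c_0=r\cos(\theta^\trl-\ophi)$, $c_1=\cos\theta^\trl$, $s_1=\sin\theta^\trl$, $c_2=r\cos2\theta^\ecc$, $s_2=r\sin2\theta^\ecc$ produces, up to the normalising constants built into the definitions, the constant function and the first two pairs of trigonometric harmonics in $x$. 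Since each mode is an analytic $C^0(\T)$--valued function of $e$, its $C^0$--distance to its $e=0$ value is $O(e)$, with implied constant depending only on $e$ through the analytic data.

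Concretely, for $c_1$ I would write $c_1(x)-\cos(2\pi x)=\cos(\theta^\trl_e(\ophi(x)))-\cos(\ophi_0(x))$, where $\ophi_0(x)$ denotes the linear circle value of $\ophi(x)$, and conclude from the Lipschitz bound for $\cos$ together with $\|\theta^\trl_e\circ\ophi-\ophi_0\|_{C^0}\to0$ --- itself a consequence of $\|\theta^\trl_e-\Id\|_{C^0}\to0$ and of the $C^0$--continuity in $e$ of the parametrisation change; $s_1$ is handled identically. The modes $c_0,c_2,s_2$ are treated the same way, the extra factor $r_e$ being uniformly bounded and convergent to its circle value in $C^0$, so that the product converges in $C^0$ as well. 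Taking $C^*(e)$ to be the maximum of the constant of Lemma~\ref{error-estimate} and the five constants just obtained, and enlarging it if necessary, yields all the displayed inequalities together with the bound on $\|c_0-1\|_{C^0}$.

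The argument is entirely soft and there is no real obstacle: the only points deserving care are bookkeeping ones, namely checking that the $e=0$ specialisations of the five geometric formulas are indeed the asserted trigonometric functions, and that analytic (hence in particular $C^0$--continuous) dependence on $e$ has been secured for every ingredient, so that all the $C^0$ errors are genuinely $o(1)$ as $e\to0$. In particular there is no $q$--uniformity issue to face here, since the new content concerns only the finitely many modes with $q\in\{0,1,2\}$.
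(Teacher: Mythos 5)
Your proof matches the paper's own, which consists of the single line ``The case $q>2$ is covered by Lemma~\ref{error-estimate}. The cases $q=0,1,2$ follow by the above definitions.'' You are simply spelling out the second step via analyticity of $r_e$, $\theta^\trl_e$, $\theta^\ecc_e$ and the Lazutkin--polar change of variable in $e$, together with the $e=0$ specialisation, which is exactly what the paper leaves implicit.
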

\begin{proof}
  The case $q>2$ is covered by Lemma~\ref{error-estimate}.
  The cases $q=0,1,2$ follow by the above definitions.
\end{proof}
From now on, for convenience of notation we rename and
normalize the functions $c_q$ and $s_q$ as follows: let
$e_0 = c_0$ and for $j > 0$ let $e_j$ so that
$e_{2j}=\sqrt2\,c_j$ and $e_{2j-1}=\sqrt2\,s_{j}$.  The five
functions that we introduced in this section generate
deformations which preserve integrability of all rational
caustics, as the following lemma shows.
\begin{lemma}\label{l_orthogonality}
  Let $0\le j\le 4$ and $k > 4$; then
  \begin{align*}
    \int e_j(x)\mu(x)e_k(x)dx = 0.
  \end{align*}
\end{lemma}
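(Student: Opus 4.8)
The statement is an orthogonality relation: the five special Deformed Fourier Modes $e_0,\dots,e_4$ (built from the homothety/translation/hyperbolic-rotation directions) are $\mu$-orthogonal to all the higher Deformed Fourier Modes $e_k$, $k>4$. The natural strategy is to invoke Lemma~\ref{l_projection-estimate} (the ``annihilation of inner products'' statement): for $q>2$, if a deformation $\bfn$ preserves the integrable rational caustic $\Gm_{1/q}$, then $\left|\int \bfn(x)\mu(x)a_q(x)\,dx\right|\le Cq^8\|\bfn\|_{C^1}^2$ with $a_q=c_q$ or $s_q$. The key observation is that each of the five directions $e_0,\dots,e_4$ is (up to normalization) the \emph{infinitesimal} generator of an honest one-parameter family of ellipses — homotheties, translations, hyperbolic rotations — and every ellipse, in particular every member of these families, has \emph{all} its rational caustics of rotation number $1/q$ integrable for $q>2$. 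So these directions should annihilate $\mu a_q$ exactly, not just to second order; the quadratic error term has to be squeezed out by a scaling argument.

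**The steps.** First, fix $j\in\{0,\dots,4\}$ and let $\bfn_t$, for small real $t$, be the perturbation function of the corresponding one-parameter family of ellipses $\cE_e + \bfn_t$ (homothety $\cE^\hth[t]$ for $j=0$; translation $\cE^\trl[t,0]$ or $\cE^\trl[0,t]$ for $j=1,2$; hyperbolic rotation $\cE^\ecc[t,0]$ or $\cE^\ecc[0,t]$ for $j=3,4$). By Lemmata~\ref{lm:homothety}--\ref{lm:hyperbolic} we have $\|\bfn_t - t\,e_j\|_{C^1}\le C\,t^2$ (after the $\sqrt2$ renormalization), in particular $\|\bfn_t\|_{C^1} = O(t)$ and $\bfn_t$ is $C^5$ (indeed analytic). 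Second, since $\cE_e+\bfn_t$ is an ellipse, it admits an integrable rational caustic of rotation number $1/q$ for every $q>2$; hence for $|t|$ small enough (so that $q^8\|\bfn_t\|_{C^1}<c(e)$, which holds once $|t| \le c'(e)q^{-8}$) Lemma~\ref{l_projection-estimate} applies and yields
\begin{align*}
  \left|\int \bfn_t(x)\,\mu(x)\,a_q(x)\,dx\right| \le C\,q^8\|\bfn_t\|_{C^1}^2 = O(q^8 t^2),
\end{align*}
where $a_q$ is $c_q$ or $s_q$ (and $e_k$ with $k>4$ is $\sqrt2$ times one of these). Third, replace $\bfn_t$ by $t\,e_j + O(t^2)$: since $\mu a_q$ is bounded in $C^0$, the $O(t^2)$ remainder contributes $O(t^2)$ to the integral, so
\begin{align*}
  \left| t \int e_j(x)\,\mu(x)\,a_q(x)\,dx \right| \le O(q^8 t^2).
\end{align*}
Dividing by $|t|$ and letting $t\to 0$ (with $q$ fixed) gives $\int e_j \mu a_q\,dx = 0$, hence $\int e_j \mu e_k\,dx = 0$ for all $k>4$, which is the claim.

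**The main obstacle.** The only real subtlety is bookkeeping of the ranges of validity: Lemma~\ref{l_projection-estimate} requires $2<q<c(e)\|\bfn\|_{C^1}^{-1/8}$, i.e.\ the caustic to be preserved and the perturbation to be small \emph{relative to} $q$. Since we send $t\to 0$ with $q$ fixed, this is automatically satisfied for $|t|$ small, so there is no circularity — but one must be careful to quantify over $t$ \emph{after} fixing $q$, and to note that the constant $C=C(e,\|\bfn_t\|_{C^5})$ stays bounded as $t\to 0$ because $\|\bfn_t\|_{C^5}\to 0$ and $C$ depends monotonically on the $C^5$-norm. A secondary point: one should confirm that the families $\cE_e+\bfn_t$ genuinely have $\partial\Om\subset U_{\cE_e}$ (so that the perturbation-function description is valid) for $|t|$ small, which is immediate since all five deformations move the boundary by $O(t)$. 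With these caveats handled, the argument is routine; the substance is entirely in the earlier Lemma~\ref{l_projection-estimate} and in the geometric Lemmata of this section.
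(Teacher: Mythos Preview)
Your proof is correct and follows essentially the same approach as the paper's: both use Lemmata~\ref{lm:homothety}--\ref{lm:hyperbolic} to relate $e_j$ to an actual one-parameter family of ellipses up to an $O(t^2)$ error, apply Lemma~\ref{l_projection-estimate} to the ellipse perturbation (which has all rational caustics integrable), and then let the parameter tend to zero to kill the quadratic remainder. The only cosmetic difference is that the paper starts from the linear deformation $\bfn=\eps e_j$ and produces a nearby ellipse $\bar\cE$ with $\|\bfn_{\bar\cE}-\eps e_j\|_{C^1}=O(\eps^2)$, while you start from the ellipse family $\bfn_t$ and use $\|\bfn_t-te_j\|_{C^1}=O(t^2)$; these are two orderings of the same estimate.
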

\begin{proof}
  For any $\eps > 0$ small, consider the $\eps$-deformation
  of the ellipse $\cE_e$ {identified by $\bfn = \eps e_j$.  By
    Lemmata~\ref{lm:homothety}--\ref{lm:hyperbolic} there
    exists another ellipse $\bar\cE$ so that
    $\bar\cE = \cE+\bfn_{\bar\cE}$ and
    $\|\bfn_{\bar\cE} - \bfn\|_{C^1} =
    O(\eps^2)$.}
  Certainly, integrability of the caustics $\Gm_{1/q}$
  (where $q = \lceil k/2\rceil$ and $\lceil\cdot\rceil$
  denotes the ceiling function) is preserved {by the
    perturbation $\bfn_{\bar\cE}$}.  Therefore, by
  Lemma~\ref{l_projection-estimate}, if $4 < k \le \eps^{-1/9}$
  { we gather that  $|\int\bfn_{\bar\cE}\mu e_k| \le C
    k^8\|\bfn_{\bar\cE}\|_{C^\rcloseness}^2$,} which gives:
  \begin{align} \label{alm-orthogonal} \left|\eps\int
      e_j(x)\, \, \mu(x)\, e_k(x)\,dx\right| \le Ck^8
    \eps^2\le C\eps^{10/9}.
  \end{align}
  Since $\eps$ can be chosen arbitrarily and the functions
  $\{e_k\}$ do not depend on the perturbation, but only on
  $\cE_e$, our lemma follows.
\end{proof}
\begin{remark*}
  Lemma~\ref{l_orthogonality} can be seen as an
  orthogonality relation with respect to the $L^2$ inner
  product with weight $\mu$.
\end{remark*}

\section{The Deformed Fourier basis}
\label{main-nonfamily}
In the previous section we completed the definition of the
Deformed Fourier modes by introducing the first $5$ modes;
let $\basis: = (e_0,e_1,\cdots,e_j,\cdots)$.  Let us also
introduce the corresponding Fourier Modes $\eF_j$ so that
$\eF_0 = 1$ and, for $j > 0$,
$\eF_{2j} = \sqrt2\,\cos(2\pi j\cdot)$ and
$\eF_{2j-1} = \sqrt2\,\sin(2\pi j\cdot)$.  Observe that we
choose the normalization in such a way that $(\eF_j)$ is an
orthonormal basis.

Let us define the following operator acting on $L^2$:
\begin{align}\label{e_definitionL}
{\cL: v\mapsto \sum_{j = 0}^{\infty}\left[\int
    \eF_jvdx\right]e_j = \sum_{j = 0}^{\infty} \hat v_j e_j}
  \end{align}
where $\hat v_j$ is the $j$-th Fourier coefficient of $v$,
i.e. $v = \sum_{j = 0}^{\infty}\hat v_j\eF_j$.  In the
sequel we will denote by $\|\cdot\|\nol2$ the usual
operator norm in $L^2$ given by:
\begin{align*}
  \|T\|\nol2\ = \ \sup_{f:\,\|f\|\nl2 \le1}\ \|T\, f\|\nl2.
\end{align*}%
\begin{proposition}\label{mixed-L2-basis}
  Assume that $e_* > 0$ is so small that
  \begin{align}\label{eccentr-smallness-assumption}
    C^*(e_*)\sqrt{1+\frac{\pi^2}3} &< 1, &\text{where $C^*(e)$
    is defined in Lemma~\ref{error-estimate-2}}.
  \end{align}
  Then, if $\cE_e$ is an ellipse of eccentricity
  $0\le e \le e_*$ and perimeter $1$, the operator $\cL$ is
  bounded and invertible 
  as an operator from $L^2$ to $L^2$.  {In particular,
    $\basis$ is a basis of $L^2$.}
\end{proposition}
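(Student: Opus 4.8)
The plan is to write $\cL = I + R$, where $R$ is the linear map sending each Fourier mode $\eF_j$ to the deviation $r_j := e_j - \eF_j$, i.e.\ $Rv = \sum_j \hat v_j\, r_j$. Since $(\eF_j)$ is orthonormal, the triangle inequality followed by Cauchy--Schwarz gives, for $v = \sum_j \hat v_j\eF_j$,
\[
  \|Rv\|_{L^2}\le \sum_j|\hat v_j|\,\|r_j\|_{L^2}\le \|v\|_{L^2}\Big(\sum_j\|r_j\|_{L^2}^2\Big)^{1/2},
\]
so, once the series $\sum_j\|r_j\|_{L^2}^2$ is seen to converge, $R$ is a bounded operator with $\|R\|_{L^2\to L^2}^2\le \sum_j\|r_j\|_{L^2}^2$, and in particular $\cL = I+R$ is a well-defined bounded operator with $\|\cL\|_{L^2\to L^2}\le 1+\|R\|_{L^2\to L^2}$. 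The whole statement then reduces to proving $\sum_j\|r_j\|_{L^2}^2 < 1$: this yields $\|R\|_{L^2\to L^2}<1$, hence $\cL = I+R$ is invertible via the Neumann series $\cL^{-1}=\sum_{n\ge 0}(-R)^n$, with $\|\cL^{-1}\|_{L^2\to L^2}\le(1-\|R\|_{L^2\to L^2})^{-1}$.

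The core of the argument is thus the estimate $\sum_j\|r_j\|_{L^2}^2\le C^*(e)^2\big(1+\tfrac{\pi^2}3\big)$, which together with~\eqref{eccentr-smallness-assumption} (and the fact that $C^*$ may be taken nondecreasing in $e$, or replaced by its supremum over $[0,e_*]$) gives exactly $\sum_j\|r_j\|_{L^2}^2<1$ for $0\le e\le e_*$. Here one must resist the crude bound $\|r_j\|_{L^2}\le\|r_j\|_{C^0}$: feeding Lemma~\ref{error-estimate-2} into it only produces the constant $1+\tfrac{2\pi^2}3$, which already overshoots $1$ on the lowest modes. Instead one works directly in $L^2$ and, recalling from~\eqref{dynam-basis} and Lemmata~\ref{error-estimate}, \ref{action-angle-variation} that for $m>2$ one has $c_m = A_m\cos\beta_m$, $s_m = A_m\sin\beta_m$ with a common amplitude $A_m = 1 + O(C^*(e)/m^2)$ and a common phase $\beta_m$ satisfying $\|\beta_m - 2\pi m\,\cdot\,\|_{C^0}\le C^*(e)/m$, estimates the $\cos$/$\sin$ pair \emph{jointly}:
\[
  \|c_m-\cos(2\pi m\,\cdot\,)\|_{L^2}^2+\|s_m-\sin(2\pi m\,\cdot\,)\|_{L^2}^2=\int_0^1\!\big(A_m^2-2A_m\cos(\beta_m-2\pi m x)+1\big)\,dx.
\]
Bounding $\cos(\beta_m-2\pi mx)\ge 1-\tfrac12(\beta_m-2\pi mx)^2$ and using $\|X_m-\Id\|_{C^1}\le C(e)/m^2$ together with the relation $C^*(e)\ge 2\pi C(e)$ built into the choice of $C^*$ (the fast oscillation contributing only the averaged factor $\int\sin^2=\int\cos^2=\tfrac12$ over a period), this is $\le C^*(e)^2/m^2$ up to lower-order terms; the five remaining modes $c_0,c_1,s_1,c_2,s_2$ are handled separately from their explicit definitions and Lemmata~\ref{lm:homothety}--\ref{lm:hyperbolic}, the cancellations $c_1^2+s_1^2\equiv1$ and $c_2^2+s_2^2=r(\cdot)^2$ playing the role of $A_m^2$. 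Summing, $\sum_{m\ge1}(\|r_{2m-1}\|_{L^2}^2+\|r_{2m}\|_{L^2}^2)\le\tfrac{\pi^2}3 C^*(e)^2$ and $\|r_0\|_{L^2}^2\le C^*(e)^2$, which is the claimed bound.

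With boundedness and invertibility of $\cL$ in hand, the basis statement follows: $\cL$ is a bounded linear isomorphism of $L^2$ with $\cL\eF_j = e_j$ and $(\eF_j)$ orthonormal, so $\basis=(e_j)$ is a Riesz basis. Explicitly, every $v\in L^2$ writes as $v=\cL(\cL^{-1}v)=\sum_j\langle\eF_j,\cL^{-1}v\rangle e_j$ with $L^2$-convergence, and the coefficients are unique since $\sum_j a_je_j=0$ forces $\cL\big(\sum_j a_j\eF_j\big)=0$, hence $\sum_j a_j\eF_j=0$ by injectivity of $\cL$, hence all $a_j=0$. Thus $\basis$ is a basis of $L^2$.

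I expect the main obstacle to be exactly the constant bookkeeping in the second paragraph. The naive $C^0$ bound is not good enough to close under hypothesis~\eqref{eccentr-smallness-assumption}, so one genuinely has to (i) pass to $L^2$ and harvest the $\tfrac12$ from averaging the fast oscillation, (ii) estimate each $\cos$/$\sin$ pair jointly so that the common amplitude and phase correction interact with the identity $\cos^2+\sin^2\equiv1$, and (iii) separately control the five geometric modes $c_0,c_1,s_1,c_2,s_2$. Keeping precise track of which error constants tend to $0$ with $e$ and which do not is what makes the explicit condition~\eqref{eccentr-smallness-assumption} the right one.
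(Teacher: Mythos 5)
Your proof takes the same route as the paper's: write $\cL=\Id+R$, bound $\|R\|_{L^2\to L^2}$ by Cauchy--Schwarz against the orthonormal $(\eF_j)$ using Parseval, and invert via the Neumann series once $\|R\|_{L^2\to L^2}<1$. What you add is a legitimate concern about the constant that the paper's proof glosses over. The paper's final line asserts
\[
\|\cL-\Id\|_{L^2\to L^2}\le C^*(e)\Big[1+2\sum_{j\ge1}\frac1{j^2}\Big]^{1/2},
\]
citing only the $C^0$ bounds of Lemma~\ref{error-estimate-2}; but feeding $\|c_q-\cos(2\pi q\cdot)\|_{C^0}\le C^*(e)/q$ and $\|s_q-\sin(2\pi q\cdot)\|_{C^0}\le C^*(e)/q$ into $\|\cdot\|_{L^2}\le\|\cdot\|_{C^0}$, and remembering the normalization $e_{2q}=\sqrt2\,c_q$, $e_{2q-1}=\sqrt2\,s_q$, gives $\sum_j\|e_j-\eF_j\|_{L^2}^2\le C^*(e)^2\big(1+4\sum_{q\ge1}1/q^2\big)$, i.e.\ the factor $\sqrt{1+2\pi^2/3}$ rather than the $\sqrt{1+\pi^2/3}$ appearing in~\eqref{eccentr-smallness-assumption}. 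Your joint $L^2$ estimate --- exploiting that the $c_q,s_q$ of~\eqref{dynam-basis} share a common amplitude $A_q$ and phase $\beta_q$, so that $\|c_q-\cos\|_{L^2}^2+\|s_q-\sin\|_{L^2}^2=\int\big(A_q^2-2A_q\cos(\beta_q-2\pi qx)+1\big)\,dx$, roughly half of what two independent $C^0$ bounds deliver --- is the correct way to recover the missing factor of $\frac12$ and obtain $1+\pi^2/3$ as stated. Since the paper emphasizes in a later remark that~\eqref{eccentr-smallness-assumption} is meant to be an explicit, checkable condition, this discrepancy is worth recording.

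That said, your sketch leaves some edges rough. The phase bound $\|\beta_q-2\pi q\cdot\|_{C^0}\le C^*(e)/q$ is not literally what Lemma~\ref{action-angle-variation} supplies (which controls $\|X_q-\Id\|_{C^1}$); as you note, it requires the relation $C^*(e)\ge 2\pi C(e)$ to be built into the choice of $C^*$ in Lemmata~\ref{error-estimate} and~\ref{error-estimate-2}, and that should be stated rather than assumed. The contribution $\sum_q(A_q-1)^2=O\big(C^*(e)^2\sum 1/q^4\big)$, dismissed as ``lower order,'' is in fact comparable in size to the main term $C^*(e)^2\sum 1/q^2$ (the two series have the same order of magnitude), so it must be absorbed by an explicit accounting, e.g.\ into a slightly enlarged $C^*$. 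And the five geometric modes $c_0,c_1,s_1,c_2,s_2$ need their own joint estimate extracted from the explicit formulas of Section~\ref{selected-five} (where the phase is an analytic reparametrization rather than $2\pi qX_q^{-1}$), which you indicate via $c_1^2+s_1^2\equiv1$ and $c_2^2+s_2^2=r^2$ but do not carry out. None of this threatens the theorem --- one may always retreat to the cruder condition $C^*(e_*)\sqrt{1+2\pi^2/3}<1$ and a smaller $e_*$ --- but to justify~\eqref{eccentr-smallness-assumption} exactly as written, these steps need to be completed.
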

\begin{proof}
  First of all, observe that if
    $\|\cL-\Id\|\nol2 < 1$, then $\cL$ is an bounded
    invertible operator with a bounded inverse.  Notice that
    for any $v\in L^2$,
    $v = \sum_{j = 0}^{\infty}\hat v_j\eF_j$:
    \begin{align*}
      [\cL-\Id](v) = \sum_{j = 0}^\infty \hat v_j(e_j-\eF_j).
    \end{align*}
    By definition, then:
    \begin{align*}
      \|\cL-\Id\|\nol2 &= \sup_{v: \|v\|\nl2 \le 1}
                         \|[\cL-\Id]v\|\nl2,%
    \end{align*}
    hence, by the Cauchy Inequality
    \begin{align*}
      \|[\cL-\Id]v\|\nl2 &\le \sum_{j =
                           2N+1}^\infty|\hat v_j|\|e_j-\eF_j\|\nl2\\
                         &\le \left[\sum_{j =
                           0}^\infty |\hat v_j|^2\right]^{1/2}\left[\sum_{j = 0}^\infty\|e_j-\eF_j\|\nl2^2\right]^{1/2}.
    \end{align*}
    Thus, using Parseval's identity we conclude that
    $\sum_{j = 0}^\infty|\hat v_j|^2 = \|v\|\nl2^2\leq1$.
    Therefore, by Lemma~\ref{error-estimate-2}, the
    definition of $e_j$ and $\eF_j$ and
    using~\eqref{eccentr-smallness-assumption} we finally
    conclude that:
  \[
  \|\cL-\Id\|\nol2 \le C^{*}(e)\left[1+2\sum_{j=1}^\infty
    \frac1{j^2}\right]^{1/2} < 1.\qedhere
  \]
\end{proof}
{ Let us now define, for any
  $q\ge0$ \begin{align}\label{e_defTilden} \tilde
    n_q&:=\int\bfn(x)\mu(x)e_q(x)dx.  \end{align} Notice
  that these numbers are \emph{not} the coefficients of the
  decomposition of $\bfn\cdot\mu$ in the basis $\basis$,
  because $\basis$ is not an orthonormal basis.  Despite
  this limitation, it is possible to obtain the following
  useful bound.}  {\begin{corollary}\label{c_parseval} The
    following estimate holds \begin{align*}
      \|\bfn\|_{L^2}^2\le C\sum_{q = 0}^{\infty}|\tilde
      n_q|^2.  \end{align*} \end{corollary}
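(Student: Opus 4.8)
The plan is to recognize the numbers $\tilde n_q$ of~\eqref{e_defTilden} as the Fourier coefficients of $\cL^{*}(\bfn\mu)$ and then to use the invertibility of $\cL$ — hence of $\cL^{*}$ — granted by Proposition~\ref{mixed-L2-basis}. First I would set $w:=\bfn\cdot\mu$; this lies in $L^{2}$ since $\bfn$ is $C^{5}$ and $\mu$ is analytic, so by~\eqref{e_defTilden} we have $\tilde n_q=\int w(x)e_q(x)\,dx=\langle w,e_q\rangle_{L^{2}}$. Because $(\eF_j)$ is orthonormal, the Fourier coefficients of $\eF_q$ are $\delta_{jq}$, so the definition~\eqref{e_definitionL} of $\cL$ gives $\cL\eF_q=e_q$; therefore $\tilde n_q=\langle w,\cL\eF_q\rangle_{L^{2}}=\langle \cL^{*}w,\eF_q\rangle_{L^{2}}$, i.e.\ $\tilde n_q$ is exactly the $q$-th Fourier coefficient of $\cL^{*}w$ in the orthonormal basis $(\eF_q)$.

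Next I would apply Parseval's identity to $\cL^{*}w$, obtaining $\sum_{q\ge 0}|\tilde n_q|^{2}=\|\cL^{*}w\|_{L^{2}}^{2}$. Since Proposition~\ref{mixed-L2-basis} asserts that $\cL$ is bounded with bounded inverse, the same is true of $\cL^{*}$, with $\|(\cL^{*})^{-1}\|\nol2=\|\cL^{-1}\|\nol2$. Hence
\begin{align*}
  \|w\|_{L^{2}}=\|(\cL^{*})^{-1}\cL^{*}w\|_{L^{2}}\le \|\cL^{-1}\|\nol2\,\|\cL^{*}w\|_{L^{2}},
\end{align*}
so that $\|\bfn\mu\|_{L^{2}}^{2}\le \|\cL^{-1}\|\nol2^{2}\,\sum_{q\ge 0}|\tilde n_q|^{2}$.

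To finish, I would use that $\mu(x)=1/(2C_{\Om_0}\rho(x)^{1/3})$ is analytic and strictly positive on the circle — the radius of curvature $\rho$ of $\cE_e$ being bounded and bounded away from $0$ — so that $|\bfn(x)|\le\|1/\mu\|_{C^{0}}\,|\bfn(x)\mu(x)|$ pointwise and therefore $\|\bfn\|_{L^{2}}^{2}\le\|1/\mu\|_{C^{0}}^{2}\,\|\bfn\mu\|_{L^{2}}^{2}$. Combining this with the previous estimate yields the claim with $C=\|1/\mu\|_{C^{0}}^{2}\,\|\cL^{-1}\|\nol2^{2}$, a constant which depends only on $e$ and is finite by Proposition~\ref{mixed-L2-basis}. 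I do not expect a genuine analytic obstacle here; the one point one must get right is to pass to the adjoint $\cL^{*}$: the $\tilde n_q$ are the Fourier coefficients of $\cL^{*}(\bfn\mu)$, not of $\cL^{-1}(\bfn\mu)$, and it is precisely the invertibility of $\cL^{*}$ (equivalently of $\cL$) that converts the $\ell^{2}$-sum of these coefficients into a lower bound for $\|\bfn\mu\|_{L^{2}}$.
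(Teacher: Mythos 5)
Your proof is correct and follows essentially the same route as the paper: identify $\tilde n_q$ as the Fourier coefficients of $\cL^{*}(\bfn\mu)$, apply Parseval, and invoke the bounded invertibility coming from Proposition~\ref{mixed-L2-basis}. The only cosmetic difference is that the paper packages the multiplication by $\mu$ into a single operator $\cL_\mu=\mu\,\cL$ and works with $(\cL_\mu^{*})^{-1}$, whereas you factor the estimate into two steps (invertibility of $\cL^{*}$ followed by the pointwise bound $|\bfn|\le\|1/\mu\|_{C^{0}}|\bfn\mu|$); the underlying computation is identical.
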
 \begin{proof}
    Let us define the operator $\cL_\mu$ from $L^2\to L^2$
    given by \begin{align*} \cL_\mu v(x) = \mu(x)\cdot[\cL
      v](x), \end{align*} where $\cL$ is defined
    in~\eqref{e_definitionL}.  Then by
    Proposition~\ref{mixed-L2-basis} and since both $\mu(x)$
    and $\mu(x)^{-1}$ are bounded and analytic, we conclude
    that $\cL_\mu:L^2\to L^2$ is a bounded invertible
    operator; therefore, so is its adjoint $\cL_\mu^*$.
    Hence, using Parseval's Identity:
    \begin{align*}
    \|\bfn\|^2_{L^2} &=
                       \|(\cL_\mu^*)^{-1}\cL_\mu^*\bfn\|^2_{L^2} \le
                       C\|\cL_\mu^*\bfn\|^2_{L^2} = C
                       \sum_{q =
                       0}^{\infty}\left|\int\cL_\mu^*(\bfn)\eF_q\right|^2\\
                     &= C
                       \sum_{q =
                       0}^{\infty}\left|\int\bfn\cL_\mu(\eF_q)\right|^2\le C\sum_{q = 0}^{\infty}\left|\int\bfn
                       \mu e_q\right|^2
  \end{align*}
  where we used the fact that
  $\cL_\mu\eF_q = \mu\cdot \cL\eF_q = \mu\cdot e_q$.
\end{proof}}
\section{Proof of the Main Theorem}\label{s_theProof}
The proof of our Main Theorem relies on the following
approximation result.
\begin{lemma}\label{l_inductive}
  Let $e_*$ be sufficiently small, so
  that~\eqref{eccentr-smallness-assumption} holds and let
  $\cE_e$ be an ellipse of perimeter $1$ and eccentricity
  $e\in[0,e_*]$.  Let $\Om$ be a rationally integrable
  $C^\rsmoothness$ deformation of $\cE_e$ identified by a
  $C^\rsmoothness$ function $\bfn(x)$, i.e.
  $\partial \Om:=\cE_e+ \bfn.$ Then there exists an ellipse
  $\bar\cE$ and $\bar\bfn$ so that
  $\partial\Om = \bar\cE+\bar\bfn$ and
  \begin{align*}
    \|\bar\bfn \|_{C^\rcloseness}\le
    C(e,\|\bfn\|_{C^\rsmoothness}) \, \|\bfn\|^{\rsuperexp}_{C^\rcloseness}. \ \
  \end{align*}
\end{lemma}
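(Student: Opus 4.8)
I would prove Lemma~\ref{l_inductive} by splitting $\bfn$ into a ``low-frequency'' part (the first $5$ Deformed Fourier modes plus the modes up to index $q_0(\eps)$), which is small because of the quantitative orthogonality from Lemma~\ref{l_projection-estimate}, and a ``high-frequency'' tail (modes above $q_0$), which is small because of the decay estimate of Lemma~\ref{coeff-decay-simpler}. The threshold is $q_0 = q_0(\eps) = [\eps^{-1/9}]$ with $\eps := \|\bfn\|_{C^\rcloseness}$, exactly as announced in Section~\ref{s_strategy}. The ellipse $\bar\cE$ will be the one produced by Corollary~\ref{c_combiningLemmata} applied to the projection of $\bfn\cdot\mu$ onto $\mathrm{span}\{e_0,\dots,e_4\}$ (in the sense dual to the $\mu$-weighted pairing), and $\bar\bfn$ the residual perturbation $\partial\Om = \bar\cE + \bar\bfn$.

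\medskip
\textbf{Step 1: reduce to controlling the $\tilde n_q$.} By Lemma~\ref{l_approximationLemma} (the estimate~\eqref{e_uniformBound}) it suffices to show that $\|\bfn - \bfn_{\bar\cE}\|_{C^\rcloseness}$, hence by analyticity and interpolation $\|\bfn-\bfn_{\bar\cE}\|_{L^2}$, is $O(\eps^{\rsuperexp})$ after subtracting the ellipse directions; and by Corollary~\ref{c_parseval} it is enough to bound $\sum_{q\ge 0}|\tilde n_q|^2$ where $\tilde n_q = \int\bfn\mu e_q\,dx$, \emph{provided} we have first removed the contribution of $e_0,\dots,e_4$ by passing to $\bar\bfn$. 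So the heart of the matter is: $\sum_{q > 4}|\tilde n_q|^2 = O(\eps^{2\rsuperexp})$, and then apply Corollary~\ref{c_combiningLemmata} together with Lemma~\ref{l_orthogonality} to convert this into a statement about $\bar\bfn$. (The orthogonality Lemma~\ref{l_orthogonality} is what guarantees that subtracting the ellipse $\bar\cE$ does not disturb the high-$q$ coefficients more than quadratically.)

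\medskip
\textbf{Step 2: the low tail $4 < q \le q_0$.} Since $\Om$ is rationally integrable, each caustic $\Gm_{1/q}$ is integrable, so Lemma~\ref{l_projection-estimate} applies (its hypothesis $q < c(e)\|\bfn\|_{C^1}^{-1/8}$ is satisfied for $q\le q_0 = [\eps^{-1/9}]$ once $\eps$ is small, because $1/9 < 1/8$). It gives $|\tilde n_q| \le Cq^8\eps^2$ for each such $q$. Summing squares,
\begin{align*}
  \sum_{4 < q\le q_0}|\tilde n_q|^2 \le C^2\eps^4\sum_{q\le q_0}q^{16} \le C'\eps^4 q_0^{17} \le C'\eps^{4 - 17/9} = C'\eps^{19/9}.
\end{align*}
Since $\rsuperexp = 703/702$ and $2\rsuperexp = 703/351 < 19/9$, this term is comfortably $O(\eps^{2\rsuperexp})$; in fact the exponent $19/9$ here is far better than needed, which is why there is slack to absorb the other contributions.

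\medskip
\textbf{Step 3: the high tail $q > q_0$.} Here I cannot use integrability; instead I use the unconditional decay from Lemma~\ref{coeff-decay-simpler}: $|\tilde n_q| \le C\|\bfn\|_{C^\rcloseness}/q = C\eps/q$ for every $q > 2$. Hence
\begin{align*}
  \sum_{q > q_0}|\tilde n_q|^2 \le C^2\eps^2\sum_{q > q_0}\frac1{q^2} \le \frac{C^2\eps^2}{q_0} \le C''\eps^{2 + 1/9} = C''\eps^{19/9},
\end{align*}
again $O(\eps^{2\rsuperexp})$ with room to spare. Note that for this step it is essential that $\bfn$ is at least $C^\rcloseness = C^1$, which is where the $C^\rsmoothness$-smoothness hypothesis of the lemma ultimately enters (through the chain of lemmas requiring higher regularity; see the role of $\rsmoothness$ in Theorem~\ref{perturbation-perimeter} and the Lazutkin normal form~\eqref{Lazutkin-map}).

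\medskip
\textbf{Step 4: assemble.} Let $\bfn^{(5)}$ be the ``ellipse part'': choose $a_0,\dots,b_2$ so that $\int(\bfn - a_0e_0 - \dots - b_2 e_4)\mu e_j\,dx = 0$ for $j = 0,\dots,4$ (possible since the Gram matrix of $e_0,\dots,e_4$ w.r.t.\ the $\mu$-pairing is close to the identity by Lemma~\ref{error-estimate-2}, hence invertible); these coefficients are $O(\eps)$. By Corollary~\ref{c_combiningLemmata} there is an ellipse $\bar\cE$ with $\bar\cE = \cE_e + \bfn_{\bar\cE}$ and $\|\bfn_{\bar\cE} - \bfn^{(5)}\|_{C^\rcloseness} = O(\eps^2)$. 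Write $\partial\Om = \bar\cE + \bar\bfn$; by Lemma~\ref{l_approximationLemma},~\eqref{e_uniformBound}, $\|\bar\bfn\|_{C^\rcloseness}$ is comparable to $\|\bfn - \bfn_{\bar\cE}\|_{C^\rcloseness}$, which by the triangle inequality is within $O(\eps^2)$ of $\|\bfn - \bfn^{(5)}\|_{C^\rcloseness}$. Now $\bfn - \bfn^{(5)}$ is $\mu$-orthogonal to $e_0,\dots,e_4$ by construction, and by Lemma~\ref{l_orthogonality} the $e_j$ with $j>4$ are $\mu$-orthogonal to $e_0,\dots,e_4$, so $\int(\bfn-\bfn^{(5)})\mu e_q = \tilde n_q$ for every $q > 4$. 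Applying Corollary~\ref{c_parseval} to $\bfn - \bfn^{(5)}$ (whose first five $\tilde{\,\cdot\,}_q$ vanish) and using analytic-interpolation between $L^2$ and, say, $C^\rsmoothness$ to upgrade from $L^2$ to $C^\rcloseness$ control, Steps 2--3 give $\|\bfn - \bfn^{(5)}\|_{C^\rcloseness} \le C(e,\|\bfn\|_{C^\rsmoothness})\,\eps^{\rsuperexp}$, whence the same bound for $\|\bar\bfn\|_{C^\rcloseness}$ after absorbing the $O(\eps^2)$ error (which is smaller since $2 > \rsuperexp$).

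\medskip
\textbf{Main obstacle.} The routine-looking but genuinely delicate point is the passage from the $L^2$ bound on $\bfn - \bfn^{(5)}$ (which is all Corollaries~\ref{c_parseval} and the tail estimates directly yield) to the $C^\rcloseness = C^1$ bound demanded by the statement: one must interpolate against the fixed $C^\rsmoothness$-bound on $\bfn$, and this is exactly why the exponent degrades from the cheap $19/9$ down to $\rsuperexp = 703/702$, and why the smoothness exponent $\rsmoothness$ and the constant $C(e,\|\bfn\|_{C^\rsmoothness})$ must be carried along (cf.\ Footnote~\ref{fn_rsmoothness}). A secondary subtlety is verifying that the hypothesis $q < c(e)\|\bfn\|_{C^1}^{-1/8}$ of Lemma~\ref{l_projection-estimate} is met for all $q\le q_0$ — this forces the choice $q_0 \sim \eps^{-1/9}$ rather than $\eps^{-1/8}$, and it is the interplay $1/9 < 1/8$ that makes Steps 2 and 3 simultaneously work.
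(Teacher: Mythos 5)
Your proposal follows essentially the same route as the paper's proof: the same $\mu$-orthogonal splitting of $\bfn$ into a five-mode ``ellipse part'' and a remainder, the same threshold $q_0\sim\|\bfn\|_{C^\rcloseness}^{-1/9}$ separating the integrability estimate (Lemma~\ref{l_projection-estimate}) from the unconditional decay estimate (Lemma~\ref{coeff-decay-simpler}), the same reliance on Corollary~\ref{c_parseval}, Lemma~\ref{l_orthogonality}, Corollary~\ref{c_combiningLemmata} and Lemma~\ref{l_approximationLemma}, and the same final Sobolev interpolation against the $C^\rsmoothness$ bound to upgrade from $L^2$ to $C^\rcloseness$ control, which is precisely where the exponent degrades to $\rsuperexp$. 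Your diagnosis of the interpolation step as the crux, and of the $1/9<1/8$ interplay as what lets both tails be controlled simultaneously, matches the paper's argument.
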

Before giving the proof of Lemma~\ref{l_inductive}, let us
use it to prove our Main Theorem, which we now
state in a (slightly) stronger version:
\begin{theorem}\label{t_main}
  Let $e_*$ be sufficiently small, so
  that~\eqref{eccentr-smallness-assumption}.  For any
  $0 < e_0 < e_*$ and $K > 0$, there exists $\eps > 0$ so
  that, for any $0\le e \le e_0$, any rationally integrable
  $C^\rsmoothness$-smooth domain $\Om$ so that $\partial\Om$
  is $C^{\rsmoothness}$-$K$-close and
  $C^\rcloseness$-$\eps$-close to $\cE_e$ is an ellipse.
\end{theorem}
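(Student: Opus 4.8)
The plan is to deduce Theorem~\ref{t_main} from the approximation estimate of Lemma~\ref{l_inductive} by a compactness-and-contraction argument: one shows that the $C^\rcloseness$-distance from $\partial\Om$ to the family of nearby ellipses, if it were positive, would be strictly decreased by a single application of Lemma~\ref{l_inductive} --- which is impossible. Fix $0<e_0<e_*$ and $K>0$, and let $\eps>0$ be a parameter, depending only on $e_*,e_0,K$, to be fixed at the end. Let $\Om$ be a rationally integrable $C^\rsmoothness$ domain with $\partial\Om$ simultaneously $C^\rsmoothness$-$K$-close and $C^\rcloseness$-$\eps$-close to $\cE_e$ for some $0\le e\le e_0$. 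Let $\cB$ be the (finite-dimensional) family of ellipses $\bar\cE = \cE_e+\bfn_{\bar\cE}$ with $\|\bfn_{\bar\cE}\|_{C^\rcloseness}\le\rho_0$, $\rho_0$ a fixed small constant; provided $\rho_0$ and $\eps$ are small enough, Lemma~\ref{l_approximationLemma} lets us write $\partial\Om=\bar\cE+\bar\bfn_{\bar\cE}$ for every $\bar\cE\in\cB$, with $\bar\bfn_{\bar\cE}$ depending continuously on $\bar\cE$. Set
\[
  \delta:=\inf_{\bar\cE\in\cB}\|\bar\bfn_{\bar\cE}\|_{C^\rcloseness}.
\]
Since $\cB$ is compact and the functional continuous, the infimum is attained, at some ellipse $\cE^{(1)}$. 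Because $\cE_e\in\cB$ with $\bar\bfn_{\cE_e}=\bfn$, we have $\delta\le\|\bfn\|_{C^\rcloseness}\le\eps$; hence $\cE^{(1)}$ lies in the interior of $\cB$ (as $\delta\le\eps\ll\rho_0$), and its eccentricity $e^{(1)}$ obeys $|e^{(1)}-e|\le C\eps$, so $e^{(1)}\in[0,e_*]$ once $\eps$ is small. If $\delta=0$, then $\partial\Om$ coincides with $\cE^{(1)}$, i.e.\ $\Om$ is an ellipse and we are done; so assume $\delta>0$.

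Apply Lemma~\ref{l_inductive} with base ellipse $\cE^{(1)}$ (of eccentricity $e^{(1)}\in[0,e_*]$) and deformation $\bfn^{(1)}:=\bar\bfn_{\cE^{(1)}}$, for which $\|\bfn^{(1)}\|_{C^\rcloseness}=\delta$. This produces an ellipse $\cE^{(2)}$ with $\partial\Om=\cE^{(2)}+\bfn^{(2)}$ and
\[
  \|\bfn^{(2)}\|_{C^\rcloseness}\le C\bigl(e^{(1)},\|\bfn^{(1)}\|_{C^\rsmoothness}\bigr)\,\delta^{\rsuperexp}.
\]
The decisive point is that this constant is \emph{uniform} in $\eps$ and in $\Om$. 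Indeed $e^{(1)}\le e_*$; and since $\partial\Om$ is $C^\rsmoothness$-$K$-close to $\cE_e$ while $\cE^{(1)}$, a member of a compact family of ellipses, is $C^\rsmoothness$-$O(\eps)$-close to $\cE_e$, the $C^\rsmoothness$ version of Lemma~\ref{l_approximationLemma} (valid since all coordinate changes are analytic) bounds $\|\bfn^{(1)}\|_{C^\rsmoothness}$ by a constant $M=M(e_*,K)$. As the constant in Lemma~\ref{l_inductive} may be taken continuous in the eccentricity and monotone in the $C^\rsmoothness$-norm of the deformation, we obtain $C\bigl(e^{(1)},\|\bfn^{(1)}\|_{C^\rsmoothness}\bigr)\le C_0:=C_0(e_*,K)$. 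Finally, $\|\bfn^{(2)}\|_{C^\rcloseness}$ being small forces $\cE^{(2)}$ to be $C^\rcloseness$-close to $\partial\Om$, hence to $\cE_e$; so, after shrinking $\eps$ if necessary, $\cE^{(2)}\in\cB$, and $\cE^{(2)}$ is an admissible competitor in the infimum defining $\delta$.

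Now fix $\eps>0$ so small that all the smallness requirements above hold and, in addition, $C_0\,\eps^{1/702}<1$. Then, recalling $\rsuperexp=703/702$ and $\delta\le\eps$,
\[
  \|\bfn^{(2)}\|_{C^\rcloseness}\le C_0\,\delta^{\rsuperexp}=\bigl(C_0\,\delta^{1/702}\bigr)\,\delta\le\bigl(C_0\,\eps^{1/702}\bigr)\,\delta<\delta,
\]
which contradicts the definition of $\delta$ as an infimum over $\cB$. Hence $\delta=0$, so $\partial\Om$ is an ellipse, which proves Theorem~\ref{t_main} and, a fortiori, the Main Theorem. I expect the real work to lie entirely upstream of this deduction: all the analytic content --- the quantitative necessary condition of Theorem~\ref{perturbation-perimeter}, the construction and near-orthonormality of the Deformed Fourier modes (Lemmata~\ref{error-estimate-2} and~\ref{l_orthogonality}), the invertibility of $\cL$ in Proposition~\ref{mixed-L2-basis}, the non-uniform decay estimate of Lemma~\ref{l_projection-estimate}, and ultimately the gain of the exponent $\rsuperexp>1$ --- is concentrated in Lemma~\ref{l_inductive}. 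Granting that lemma, the argument above is soft, and the only points demanding genuine care are the bookkeeping of constants (keeping the constant produced by Lemma~\ref{l_inductive} controlled over the compact ranges $e^{(1)}\in[0,e_*]$ and $\|\bfn^{(1)}\|_{C^\rsmoothness}\le M(e_*,K)$, so it does not blow up as $\eps\to0$) together with the elementary geometric facts that the best approximating ellipse stays in the interior of the admissible family and has eccentricity below $e_*$.
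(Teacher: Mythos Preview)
Your proof is correct and follows essentially the same approach as the paper: minimize the $C^\rcloseness$ distance from $\partial\Om$ to a compact family of nearby ellipses, apply Lemma~\ref{l_inductive} once to the minimizer to obtain a strict improvement, and contradict minimality. The only cosmetic differences are that the paper parametrizes the admissible family of ellipses via the Hausdorff distance (which makes the triangle-inequality check that $\bar\cE_*$ stays admissible slightly cleaner) and explicitly rescales the minimizing ellipse to perimeter~$1$ before invoking Lemma~\ref{l_inductive}; your bookkeeping of the constant $C_0(e_*,K)$ is handled in the paper by the uniform bound $\|\bfn_a\|_{C^\rsmoothness}<2K$ over the compact parameter set.
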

\begin{proof} %
  To ease our notations, let us drop the subscript $e$ and
  let $\cE = \cE_e$.  Let us fix $K > 0$ arbitrarily and
  $\eps > 0$ sufficiently small to be specified later.  Denote
  with $\kEll_{\eps}(\cE)$ the set of ellipses (not
  necessarily of perimeter $1$) whose $C^0$-Hausdorff
  distance from $\cE$ is not larger than $2\eps$, i.e.
  \begin{align*}
  \kEll_{\eps}(\cE) = \{\cE'\subset\R^2, \hDist(\cE,\cE')\le2\eps\}.
  \end{align*}
  We assume $\eps$ so small (depending on $e_0$) that any
  $\cE'\in \kEll_\eps(\cE)$ has length
  $\ell_{\cE'}\in[3/4,5/4]$ and eccentricity $e'\in[0,e_*]$.
  Recall that any ellipse in $\R^2$ can be parametrized by
  $5$ real quantities (e.g. the coefficients of the
  corresponding quadratic equation): let $\kPar_\eps(\cE)$
  be the set of parameters $a\in\R^5$ corresponding to
  ellipses in $\kEll_\eps(\cE)$; then $\kPar_\eps(\cE)$ is
  compact.

  Let now $\bfn$ be a $C^\rsmoothness$ perturbation with
  $\|\bfn\|_{C^{\rsmoothness}} < K$ and
  $\|\bfn\|_{C^\rcloseness} < \eps$ and consider the domain
  $\Om$ given by
  \begin{align*}
    \partial\Om = \cE+\bfn.
  \end{align*}
  For any $5$-tuple of parameters $a\in \kPar$ we associate
  the corresponding ellipse $\cE_a$ and perturbation
  $\bfn_a$ so that $\partial\Om = \cE_a+\bfn_a$.  Observe
  that the Lazutkin tubular coordinates $(x,n)$ of $\Om$
  change analytically with respect to $a$; we conclude that
  $\bfn_a$ also varies analytically with respect to $a$.  In
  particular, we can assume $\eps$ so small that for any
  $a\in\kPar_\eps(\cE)$,
  $\|\bfn_a\|_{C^{\rsmoothness}} < 2K$.  Moreover, the
  function $a\mapsto\|\bfn_a\|_{C^{\rcloseness}}$ is a
  continuous function and as such it will have a minimum,
  which we denote by $a_*\in \kPar_\eps(\cE)$.  To ease our
  notation, let $\cE_* = \cE_{a_*}$ and correspondingly
  $\bfn_* = \bfn_{a_*}$; then by definition:
  \begin{align*}
    0 \le \|\bfn_*\|_{C^{\rcloseness}} \le
    \|\bfn\|_{C^{\rcloseness}} \le \eps.
  \end{align*}
  Modulo a possible linear rescaling (which also rescales
  linearly $\bfn$, since the Lazutkin perimeter is
  normalized to be $1$) we can assume that $\cE_*$ has
  perimeter $1$; we thus, apply Lemma~\ref{l_inductive} to
  $\cE_*$ and $\bfn_*$ obtaining $\bar\cE_*$ and
  $\bar\bfn_*$.  But if $\eps$ is small enough, then there
  exists $\varrho\in(0,1)$ so that
  $\|\bar\bfn_*\|_{C^{\rcloseness}} \le \varrho
  \|\bfn_*\|_{C^{\rcloseness}}$.
  Hence, by the triangle inequality,
  \begin{align*}
    \hDist(\cE,\bar\cE_*)\le\hDist(\cE,\Om)+\hDist(\Om,\bar\cE_*)
    \le (1+\varrho)\eps < 2\eps
  \end{align*}
  thus $\bar\cE_*\in\kEll_\eps(\cE)$.  Since
  $\|\bfn_*\|_{C^{\rcloseness}}$ was minimal, we conclude
  that
  $\|\bfn_*\|_{C^{\rcloseness}}
  =\|\bar\bfn_*\|_{C^{\rcloseness}} = 0$,
  i.e.  $\Om = \cE_*$ is an ellipse.
\end{proof}
We conclude this article by giving the
\begin{proof}[Proof of Lemma~\ref{l_inductive}]
  Observe that Lemma~\ref{l_orthogonality} implies that the
  vectors $\{e_j\,:\,0\le j\le 4\}$ are $\mu$-orthogonal to
  the subspace generated by $\{e_j\,:\,j>4\}$.

  Now, let us decompose
  \begin{align}\label{e_decompositionBfn}
    \bfn(x) = \bfn^{(5)}(x) +\bfn^\perp(x)
  \end{align}
  where $\bfn^\perp$ is $\mu$-orthogonal to the subspace
  spanned by $\{e_j\,:\,0\le j\le 4\}$ and $\bfn^{(5)}$ is
  its complement; then $\bfn^{(5)} = \sum_{j = 0}^4a_je_j$
  for some $(a_j)_{0\le j\le 4}$.

  We claim that $|a_j| < C\|\bfn\|_{C^\rcloseness}$, where
  $C = C(e)$ depends on the eccentricity $e$ only.  By
  $\mu$-orthogonality we have
  \begin{align*}
    \|\bfn^{(5)}\|\nlmu2^2+\|\bfn^{\perp}\|\nlmu2^2 =
    \|\bfn\|\nlmu2^2\le  C\|\bfn\|_{C^\rcloseness}^2,
  \end{align*}
  where $C = C(e)$ and $\|\cdot\|\nlmu2$ denotes the $L^2$
  norm induced by the inner product with weight $\mu$, i.e.
  $\|f\|\nlmu2 = \|\sqrt\mu f\|\nl2$; this norm is clearly
  equivalent to the standard $L^2$ norm.  In particular, we
  have $\|\bfn^{(5)}\|\nl2\le C\|\bfn\|_{C^\rcloseness}$,
  which implies our claim.

  Since $e_j$ is analytic for $0\le j\le 4$, we also have
  \begin{align}\label{e_smallnessEllipse}
    \|\bfn^{(5)}\|_{C^\rsmoothness} < C\|\bfn\|_{C^\rcloseness}.
  \end{align}
  We now claim that
  \begin{align}\label{e_claimSmallnPerp}
    \|\bfn^\perp\|_{C^\rcloseness}\le C(e,\|\bfn\|_{C^\rsmoothness})\|{\bfn}\|_{C^\rcloseness}^{\rsuperexp}
  \end{align}
  where $C$ above depends monotonically on
  $\|\bfn\|_{C^{\rsmoothness}}$.  The above estimate allows
  to conclude the proof of our result as we now describe.

  Let $\bar\cE$ be the ellipse obtained by applying
  Corollary~\ref{c_combiningLemmata} to $\cE$ and
    $\bfn^{(5)}$; recall that by construction
    $\bar\cE = \cE + \bfn_{\bar\cE}$ and,
    using~\eqref{e_smallnessEllipse}, we obtain the bound
  \begin{align}\label{e_estimateRigid}
    \|\bfn_{\bar\cE}-\bfn^{(5)}\|_{C^{\rcloseness}}\le C\|\bfn\|_{C^\rcloseness}^2.
  \end{align}
  Then let $\Om = \bar\cE+\bar\bfn$; by
  Lemma~\ref{l_approximationLemma} we conclude that for some
  $C$ depending on $e$ only,
  \begin{align*}
    \|\bar\bfn\|_{C^\rcloseness}\le C\|\bfn-\bfn_{\bar\cE}\|_{C^{\rcloseness}}
    = C\|\bfn^{(5)} -\bfn_{\bar\cE}+\bfn^{\perp}\|_{C^{\rcloseness}}.
  \end{align*}
  By the triangle inequality,
  using~\eqref{e_claimSmallnPerp}
  and~\eqref{e_estimateRigid} we gather that
  \begin{align*}
    \|\bfn^{(5)} -\bfn_{\bar\cE}+\bfn^{\perp}\|_{C^\rcloseness} <
    C(e,\|\bfn\|_{C^\rsmoothness})\|{\bfn}\|_{C^\rcloseness}^{\rsuperexp},
  \end{align*}
  which completes the proof of our lemma.

  We are left with the proof of~\eqref{e_claimSmallnPerp}:
  we first show that the component $\bfn^\perp$ of the
  decomposition~\eqref{e_decompositionBfn} is $L^2$-small
  and, later, we will deduce that it is indeed
  $C^\rcloseness$-small.  Applying
  Corollary~\ref{c_parseval} to $\bfn^\perp$ and taking into
  account its orthogonality to the first $5$ modes (see
  Lemma~\ref{l_orthogonality}) we obtain
   \begin{align*}
     \|\bfn^\perp\|^2_{L^2} &\le C\sum_{q = 5}^{\infty}|\tilde n_q|^2,
   \end{align*}
   where $\tilde n_q$ has been defined in~\eqref{e_defTilden}.

  Fix $\alpha < 1/8$ to be specified later and let
  $q_0=[ \|{\bf n}\|_{C^\rcloseness}^{-\alpha}]$, where
  $[x]$ denotes the integer part of $x$; by
  Lemma~\ref{l_projection-estimate}, for any $4<q\le q_0$,
  we have
  \begin{align*}
    |\tilde n_q|\le Cq^8 \|\bfn\|_{C^\rcloseness}^2\le C\|{\bf n}\|_{C^\rcloseness}^{2-8\alpha},
  \end{align*}
  where $C$ depends on $e$ and on $\|\bfn\|_{C^5}$ only.
  Then, summing over $5\le q\le q_0$, we obtain
  \begin{align*}
    \sum_{q = 5}^{q_0} |\tilde n_q|^2\le
    C\|\bfn\|_{C^\rcloseness}^{4-17\alpha}.
  \end{align*}
  On the other hand, Lemma~\ref{coeff-decay-simpler} gives:
  \begin{align*}
    |\tilde n_q|^2\le C  \frac{\|\bfn\|_{C^\rcloseness}^2}{q^2};
  \end{align*}
  therefore, summing over $q > q_0$ we conclude that
  \begin{align*}
    \sum_{q = q_0+1}^\infty|\tilde n_q|^2 \le C\|\bfn\|_{C^\rcloseness}^{2+\alpha}.
  \end{align*}
  Combining the two above estimates and optimizing for
  $\alpha$ (i.e.\ choosing $\alpha = 1/9$), we conclude that
  $\|\bfn^\perp\|\nl2\le C\|\bfn\|_{C^\rcloseness}^{19/18}$.

  In order to upgrade this $L^2$ estimate to a
  $C^\rcloseness$ estimate, first, observe that we have:
  \begin{align*}
    \|\bfn^\perp\|_{C^\rcloseness}\le %
    \|D\bfn^\perp\|_{L^1}+\|D^2\bfn^\perp\|_{L^1}\le
    \|D\bfn^\perp\|_{L^2}+\|D^2\bfn^\perp\|_{L^2}.
  \end{align*}
  We then use standard Sobolev interpolation inequalities
  (see e.g.~\cite{GT}): for any $\dt>0$ and any
  $1\le j\le 2$ we have,
  \begin{align*}
    \|D^j \bfn^\perp\|_{L^2}\le%
    C \left[\dt \|\bfn^\perp\|_{C^\rsmoothness}+
    \dt^{-j/(\rsmoothness-j)} \|\bfn^\perp\|_{L^2}\right].
  \end{align*}
  Optimizing the above estimate\footnote{ The number
    $\rsmoothness$ has indeed been chosen to be minimal
    among those for which the above interpolation inequality
    provides an useful bound.\label{fn_rsmoothness}}, we choose
  $\dt=\|\bfn\|_{C^\rcloseness}^{\rsuperexp}$.
  Observe that $\|\bfn^\perp\|_{C^{\rsmoothness}}$ is
  uniformly bounded using~\eqref{e_smallnessEllipse};
  we thus conclude that~\eqref{e_claimSmallnPerp} holds.
\end{proof}
\bibliographystyle{abbrv} \bibliography{birkhoff}
\end{document}